\newcommand{\la}{\langle}
\newcommand{\ra}{\rangle}
\renewcommand{\Re}{\operatorname{Re}}
\renewcommand{\Im}{\operatorname{Im}}
\newtheorem{theorem}{Theorem}
\newtheorem{proposition}[theorem]{Proposition}
\newtheorem{lemma}[theorem]{Lemma}
\newtheorem{corollary}[theorem]{Corollary}
\theoremstyle{remark}
\newtheorem{remark}[theorem]{Remark}
\numberwithin{equation}{section}
\numberwithin{theorem}{section}
\numberwithin{table}{section}
\numberwithin{figure}{section}
\title[Effective Dynamics of the Vector NLS on Large Domains]{Effective Dynamics of the Vector Nonlinear Schr\"odinger Equations on Large Domains}
\author{Katherine Zhiyuan Zhang}
\address{Courant Institute of Mathematical Sciences, New York University}
\begin{document}

\maketitle

\begin{abstract}
We consider the vector nonlinear Schr\"odinger equation posed on the box with periodic boundary conditions $\mathbb{T}^n_L$, and derive the continuous resonant (CR) equation that describes the effective dynamics for large $L$ and small data size over very large time-scales. Moreover, we investigate various properties of the continuous resonant equations, including the Hamiltonian structure and the well-posedness, etc..
\end{abstract}

\section{Introduction}




We consider the following Vector Nonlinear Schr\"odinger Equations (VNLS) with small initial data:
\begin{equation} \label{VNLS}
-i\partial_t E + \frac{1}{2 \pi} \Delta E = \Delta^{-1} \nabla \nabla \cdot (|E|^2 E) , \ E|_{t=0} = \epsilon E_0 .
\end{equation}
Here $E = E(x, t) : \mathbb{T}^n_L \times \mathbb{R} \rightarrow \mathbb{C}^n$ is an \emph{irrotational} complex-valued vector field (see \cite{CW1}), and $E_0 = E_0(x) : \mathbb{T}^n_L  \rightarrow \mathbb{C}^n$ is a fixed \emph{irrotational} complex-valued vector field. Writing \eqref{VNLS} in different components yields
\begin{equation}
-i\partial_t E_j + \frac{1}{2 \pi} \Delta E_j = \big[ \Delta^{-1} \nabla \nabla \cdot (|E|^2 E)\big]_j , \ E_j|_{t=0} = \epsilon E_{0, j} , \ j = 1, 2, \cdots , n .
\end{equation}
The equation \eqref{VNLS} is motivated by the mathematical description of nonlinear (Langmuir) waves in a collisionless plasma. In fact, a collisionless plasma can be described by the coupled evolution of the electron plasma wave envelope, denoted by $E(x,t)$, the governing Langmuir waves, and the ion density fluctuations about its equilibrium value, denoted by $\delta n (x, t)$. The evolution equations are known as the Zakharov equations (see, for example, \cite{Dendy1}, \cite{GTWT1}, \cite{ZMR1}) 
\begin{equation} \label{VNLS-plasma}
\begin{split}
& - i \frac{\partial E}{\partial t} = \alpha^2 \nabla \times \nabla \times  E - \nabla (\nabla \cdot E ) + n E ,  \\
& \frac{1}{c_s^2} \frac{\partial^2 n}{\partial t^2} = \Delta (n + |E|^2) , \\
\end{split}
\end{equation}
where $E$ is related to the slowly varying envelope of the electric field $\mathfrak{E}$ by 
\begin{equation*}
\mathfrak{E} = \frac{1}{2} (E e^{-i \omega_p t} + E^* e^{i \omega_p t} ) .
\end{equation*}
The parameter $\alpha^2$ is given by
\begin{equation*}
\alpha^2 = \frac{c^2}{3 v_{T_e}^2} , 
\end{equation*}
where $c$ is the speed of light, and $v_{T_e}$ is the thermal velocity of the electrons, and $\omega_p$ is the plasma frequency (see, for example, \cite{Dendy1}). In the limit $c_s \rightarrow \infty$, when $E(x, t)$ is an \emph{irrotational} field, i.e. $E = \nabla \psi$, the term $  \nabla \times \nabla \times  E $ vanishes and the system \eqref{VNLS-plasma} reduces to the vector nonlinear Schr\"odinger equation \eqref{VNLS}. Notice that we put down the operator $\Delta^{-1} \nabla \nabla \cdot$, which is the projection onto the curl free part, to the nonlinearity, so as to make sure that the irrotationality is preserved along the solution flow. We would like to study the long-time behavior in the large box limit $L \rightarrow + \infty$ for \eqref{VNLS}. 

For various nonlinear dispersive PDEs, the qualitative feature of global solutions has been a long standing topic of interest. It is expected that for generic initial data, solutions will exhibit a behavior that is very different than the linear one, even for small data. An example of such behavior is the phenomenon of \emph{energy cascade}, in which solutions transfer their energy to higher and higher Fourier modes. Study on this behavior has been carried out by inspecting the behavior of high Sobolev norms, see, for example, \cite{CKSTT1}, \cite{Hani1}, \cite{GuaKal1}. 

The intuition for energy cascade is highly motivated by the theory of statistical physics of nonlinear dispersive waves, namely \emph{wave turbulence} or \emph{weak turbulence}. An element of the mathematical theory of wave turbulence is the derivation of a \emph{kinetic wave equation} (the KWE equation) which should describe large time dynamics. The derivation of this \emph{kinetic wave equation} is performed under a randomness assumption, in a limiting sense  where the size of the data $\epsilon \rightarrow 0$ (weak nonlinearity), and the size of the domain $L \rightarrow \infty$ (large box limit, or, up to rescaling, high frequency limit).  

The study of the long-time behavior in the large box limit is also motivated by the need to understand the various regimes and effective dynamics that are featured by dispersive systems on large domains (e.g. water waves equation posed on the ocean surface). It has been a tempting thought for people to find simplified equations posed on $\mathbb{R}^n$ that approximate the real dynamics when $L$ is very large. The first and most intuitive such simplification, is to study the same equation on $\mathbb{R}^n$, that is, the Euclidean approximation. We consider data of size $O(\epsilon)$ and consider the effectiveness of the Euclidean approximation when $\epsilon$ is small and $L \rightarrow +\infty$. It turns out that this approximation has its limitations -- namely, it is only valid in situations and over time-scales for which the solution does not feel the effect of the boundary of the domain. It is crucial to understand the effective time scale of this approximation. There are two important time scales to consider: the nonlinear time scale $T_{nl} \sim \epsilon^{-2p}$ and the Euclidean time scale $T_E \sim L$. Beyond $T_{nl}$ and $T_E$, there exists another time scale $T_R$, which we call the \emph{resonant time scale}. 

It was shown in \cite{BGHS1} that for the nonlinear Schr\"odinger equation, the effective dynamics of the solution can be described by an equation on $\mathbb{R}^n$, called the \emph{continuous resonant equation} (the CR equation), over the time scale $T_R$. Their proof relies on tools from analytic number theory, including a relatively modern version of the Hardy-Littlewood circle method, as well as a normal form transformation. Similar study is carried out in \cite{FGH1}, etc.. On the other hand, in a stochastic setting with randomness assumed in the initial data, the KWE equations are derived to describe the large box and weakly nonlinear limit of NLS, see, for example, \cite{BGHS3}, \cite{CG1}, \cite{CG2}, \cite{DH1}, etc..


Properties of the CR equation are studied in various literature. \cite{BGHS2} carried out an investigation of the structure of the CR equation derived in \cite{BGHS1}, its local well-posedness, and the existence of stationary waves. In \cite{GHTho1}, it was shown that CR can arise in another natural way, as it also corresponds to the resonant cubic Hermite-Schr\"odinger equation (NLS with harmonic trapping). The paper also uncovered that that the basis of special Hermite functions is well suited to its analysis. \cite{GHTho2} carried out a probabilistic study of the CR equation. More study on the structure and analytic and variational properties of CR can be seen in \cite{FGH1}, \cite{Fennell1}, etc.. Moreover, properties of the KWE equations are also of great interest and is investigated in various literature, see, for example, \cite{EV1}, \cite{GITran1}, etc..



In this paper, we consider the CR equation for the vector nonlinear Schr\"odinger equation \eqref{VNLS}. We use the ansatz $E= \epsilon u$ to represent our selection of a small data of size $\sim \epsilon$, then the equation for $u$ is
\begin{equation} \label{VNLS-rescaled}
-i\partial_t u + \frac{1}{2 \pi} \Delta u = \epsilon^2 \Delta^{-1} \nabla \nabla \cdot (|u|^2 u) , \ u|_{t=0} =   E_0 ,
\end{equation}
or,
\begin{equation}
-i\partial_t u_j + \frac{1}{2 \pi} \Delta u_j = \epsilon^2 \big[ \Delta^{-1} \nabla \nabla \cdot (|u|^2 u)\big]_j , \ u_j|_{t=0} = E_{0, j} , \ j = 1, 2, \cdots , n .
\end{equation}
We are going to study the long-time behavior of this small initial data solution of \eqref{VNLS} in the rescaled frame (equation \eqref{VNLS-rescaled}) when taking the large box limit $L \rightarrow \infty$. 

We expand $u$ in its Fourier coefficients:
\begin{equation*}
u(t, x) = \frac{1}{L^n} \sum_{K \in \mathbb{Z}_L^n} u_K (t) \ e(K \cdot x)
\end{equation*}
where $K \in \mathbb{Z}_L^n = (\mathbb{Z}/L)^n$, $e(\alpha) = \exp (2 \pi i \alpha) $. Define
\begin{equation*}
a_K (t) = e^{-|K|^2 t} u_K (t) .
\end{equation*}
The equation satisfied by $a_K (t)$ is
\begin{equation*}
- i \partial_t a_K = \frac{\epsilon^2}{L^{2n}}   \frac{K(j)}{|K|^2} \sum_{m=1}^n K(m) \sum_{S_3 (K)=0  } (\sum_{l=1}^n a_{l, K_1} \overline{a}_{l, K_2} )  a_{m, K_3} \exp (2 \pi i \Omega_3 (K) t ) .
\end{equation*}
We can split the nonlinear terms into resonant and non-resonant interactions:
\begin{equation*}
\begin{split}
- i \partial_t a_K = 
& \frac{\epsilon^2}{L^{2n}}   \frac{K(j)}{|K|^2} \sum_{m=1}^n K(m) \sum_{S_3 (K)=0 , \  \Omega_3 (K) = 0  } (\sum_{l=1}^n a_{l, K_1} \overline{a}_{l, K_2} )  a_{m, K_3}   \\
& + \frac{\epsilon^2}{L^{2n}}   \frac{K(j)}{|K|^2} \sum_{m=1}^n K(m) \sum_{S_3 (K)=0  , \  \Omega_3 (K) \neq 0   } (\sum_{l=1}^n a_{l, K_1} \overline{a}_{l, K_2} )  a_{m, K_3} \exp (2 \pi i \Omega_3 (K) t ) . \\
\end{split}
\end{equation*}
Using a normal form transformation, we can show that for $\epsilon$ sufficiently small, the non-resonant interactions become dynamically irrelevant, and the dynamics of small solutions are well-approximated by the resonant terms. Moreover, for $L$ sufficiently large, we can utilize tools from analytic number theory to approximate the resonant sum by an integral in a manner similar to how Riemann sums are approximated by integrals.


 
It turns out that the limiting dynamics of $a_K(t)$ (up to rescaling time by a factor $\frac{L^{2n}}{Z_n (L) \epsilon^2}$) is given by a \emph{continuous resonant equation} with the effective time scale $\sim T_R$, just as the case of NLS in \cite{BGHS1}. In this paper, we will show, for $n \geq 3$, as $\epsilon \rightarrow 0$, $L \rightarrow \infty$, $\epsilon<L^{-\gamma}$ for some $\gamma >0$, the effective dynamics of the solution of \eqref{VNLS-rescaled} is depicted by the following \emph{continuous resonant equation} (CR):
\begin{equation} \label{CR}
-i\partial_t g(t,K)= \mathcal{T} (g(t, \cdot) , g(t, \cdot) , g(t, \cdot)) (t, K) , \ K \in \mathbb{R}^n .
\end{equation}
For $n=2$, as $\epsilon \rightarrow 0$, $L \rightarrow \infty$, $\epsilon<L^{-\gamma}$ for some $\gamma >0$, the effective dynamics of the solution of \eqref{VNLS-rescaled} is depicted by the following \emph{continuous resonant equation} (CR):
\begin{equation} \label{CR-n=2}
-i\partial_t g(t,K)= \mathcal{T} (g(t, \cdot) , g(t, \cdot) , g(t, \cdot)) (t, K) + \frac{\zeta(2)}{\log L}  \sum_{l=1}^n \frac{K(j)}{|K|^2} \sum_{m=1}^n K(m) \mathcal{C} (g(t, \cdot)) (t, K) , \ K \in \mathbb{R}^2 .
\end{equation}
Here $\mathcal{T} = (\mathcal{T}_j )_{j=1, 2, \cdots , n} $, 
\begin{equation*}
\mathcal{T}_j (g, g, g) (K) := \frac{K(j)}{|K|^2} \sum_{m=1}^n K(m) \int_{\mathbb{R}^{3n}} \big(\sum_{l=1}^n g_l (K_1) \overline{g}_l (K_2) \big) g_m (K_3) \delta_{\mathbb{R}^n} (S_3 (K)) \delta_{\mathbb{R}} (\Omega_3 (K)) dK_1 dK_2 dK_3  , 
\end{equation*}
\begin{equation*}
S_3 (K) := K_1 -K_2 + K_3 -K  , \   \Omega_3 (K) := |K_1 |^2 - |K_2 |^2 + |K_3 |^2 - |K|^2 ,
\end{equation*}
and $\mathcal{C} (g(t, \cdot))$ is a correction term defined in \eqref{Th5-BGHS1-eq1} and satisfies the bound \eqref{Th5-BGHS1-eq2} in below, $\zeta (n)$ is the Riemann zeta function. Moreover, we shall only consider solutions of \eqref{CR} and \eqref{CR-n=2} with the following constraint
\begin{equation}
g (K) = K G (K) , \text{ for some scalar function } G (K) .
\end{equation}
This constraint ensures that $\check{g}$ is curl free, i.e. $\frac{K(j)}{|K|^2} K \cdot g(K) =g_j (K)$.



We define the functional spaces given by the following norms:
\begin{equation}
\begin{split}
& \|f \|_{X^l (\mathbb{R}^d)} :=  \| \la K \ra^l f(K) \|_{L^\infty (\mathbb{R}^d)} ,  \\
& \|f \|_{X^{l, N} (\mathbb{R}^d)} := \sum_{0 \leq |\alpha| \leq N} \| \nabla^\alpha  f \|_{X^l (\mathbb{R}^d)}   ,  \\
& X^{l}_n (\mathbb{R}^d) := (X^{l} (\mathbb{R}^d))^n , \\
& X^{l, N}_n (\mathbb{R}^d) := (X^{l, N} (\mathbb{R}^d))^n  \\
\end{split}
\end{equation}
and similarly define the spaces $X^l (\mathbb{Z}^d_L)$, $X^{l, N} (\mathbb{Z}^d_L)$, $X^{l}_n (\mathbb{Z}^d_L)$ and $X^{l, N}_n (\mathbb{Z}^d_L)$. 

Fix $0 < \gamma <1$. Let 
\[   Z_n (L) := \begin{cases} 
           \frac{1}{\zeta(2)} L^2 \log L , \text{ when } n =2 , \\
          \frac{\zeta(n-1)}{\zeta(n)} L^{2n-2}  , \text{ when } n \geq 3 ,
       \end{cases}
\]
\[   \delta (L) := \begin{cases} 
          ( \log L )^{-1} , \ \text{ when } n =2 , \\
           L^{-1+\gamma}  ,  \ \ \ \ \ \ \text{ when } n \geq 3 .
       \end{cases}
\]
Here $\zeta (n)$ is the Riemann zeta function.

We describe our main results as follows:

\begin{theorem} \label{th1}
Fix $l>2n$ and $0 < \gamma <1$. Let $g_0 \in X^{l+n+2, 3n+3}_n (\mathbb{R}^n)$, and suppose that $g(t, \xi)$ is a solution of \eqref{CR} over a time interval $[0, M]$ with initial data $g_0 = g(t=0)$. Denote
\begin{equation*}
B := \sup_{t \in [0, M] } \|g(t)\|_{ X^{l+n+2, 3n+3}_n (\mathbb{R}^n) } .
\end{equation*}
Let $u$ be a solution of \eqref{VNLS-rescaled} with initial data $u_0 = \frac{1}{L^n} \sum_{\mathbb{Z}_L^n} g_0 (K) e(K \cdot x)$, and set for $K \in \mathbb{Z}_L^n$
\begin{equation*}
a_K (t) := u_K (t) e(-|K|^2 t) .
\end{equation*} 
Then for $L$ sufficiently large and $\epsilon^2 L^\gamma$ sufficiently small, there exists a constant $C_{\gamma, M , B}$, such that for all $t \in [0, MT_R]$,
\begin{equation}
\Big\| a_K(t) - g(\frac{t}{T_R}, K)  \Big\|_{X^l_n (\mathbb{Z}_L^n)} \lesssim_n C_{\gamma, M, B} (\delta (L) + \epsilon^2 L^\gamma) ,
\end{equation}
where
\begin{equation}
T_R := \frac{L^{2n}}{\epsilon^2 Z_n (L)} .
\end{equation}
\end{theorem}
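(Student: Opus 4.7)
The plan is to adapt the three-step strategy of \cite{BGHS1} to the vector setting, exploiting the fact that the Leray-type projector $\Delta^{-1}\nabla\nabla\cdot$ inside the nonlinearity automatically preserves the irrotational constraint $g(K)=KG(K)$. Consequently, the vector structure enters only through the bounded smooth symbol $K(j)K(m)/|K|^2$ and does not interact with the delicate number-theoretic or dispersive estimates; at each stage the analysis reduces to the scalar amplitude $K\cdot b_K/|K|^2$.

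\textbf{Step 1 (Normal form).} Starting from the resonant/non-resonant split of the equation for $a_K$ displayed in the introduction, I would introduce a near-identity change of variables
\[
a_K \;=\; b_K + \frac{\epsilon^2}{L^{2n}}\,\mathcal{N}(b,b,b)_K,
\]
where $\mathcal{N}$ is the trilinear symbol obtained by dividing the non-resonant cubic symbol by $2\pi i\,\Omega_3(K)$. Off resonance one has $|\Omega_3(K)|\geq L^{-2}$, so $\mathcal{N}$ is bounded on $X^l_n(\mathbb{Z}_L^n)$ with at most polynomial loss in $L$, which is absorbed by the prefactor under the assumption $\epsilon^2 L^\gamma\ll 1$. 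After the substitution $b_K$ satisfies a purely resonant cubic equation, plus a quintic remainder of size $O(\epsilon^4 L^{O(1)})/T_R$ in $X^l_n$, negligible on the time window $[0,MT_R]$.

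\textbf{Step 2 (Riemann-sum approximation).} Rescaling time by $T_R$, the leading coefficient in front of the resonant sum becomes $1/Z_n(L)$, and the analytic heart of the theorem is the uniform-in-$K$ approximation
\[
\frac{1}{Z_n(L)}\!\!\sum_{\substack{K_1,K_2,K_3\in\mathbb{Z}_L^n\\ S_3(K)=0,\;\Omega_3(K)=0}}\!\!F(K_1,K_2,K_3) \;=\; \int_{\mathbb{R}^{3n}} F \,\delta_{\mathbb{R}^n}(S_3)\,\delta_{\mathbb{R}}(\Omega_3)\,dK_1\,dK_2\,dK_3 + O(\delta(L)),
\]
for $F$ in the appropriate weighted $X^{l+n+2,3n+3}$-class, together with the additional correction $\mathcal{C}(g)$ when $n=2$. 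This is the classical Hardy--Littlewood circle-method input of \cite{BGHS1}, unchanged by the vector coefficients $K(j)K(m)/|K|^2$ since they sit outside the summation. The normalizing constants $\zeta(n-1)/\zeta(n)$ for $n\geq 3$ and $\zeta(2)^{-1}$ for $n=2$ arise directly from the singular series of the associated indefinite quadratic form.

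\textbf{Step 3 (Grönwall closure).} Setting $r_K(\tau):=b_K(T_R\tau)-g(\tau,K)\big|_{K\in\mathbb{Z}_L^n}$ for $\tau\in[0,M]$ and subtracting the resonant equation for $b$ from \eqref{CR} (respectively \eqref{CR-n=2}), the trilinear boundedness of $\mathcal{T}$ on $X^l_n$ --- itself a consequence of Schur-type bounds on the resonant measure as in \cite{BGHS2} --- together with the hypothesis $\|g\|_{X^{l+n+2,3n+3}_n}\leq B$ gives
\[
\partial_\tau\|r\|_{X^l_n(\mathbb{Z}_L^n)}\;\lesssim_B\; \|r\|_{X^l_n(\mathbb{Z}_L^n)} + \delta(L) + \epsilon^2 L^\gamma.
\]
Grönwall on $[0,M]$ and undoing the normal form (which contributes an additional $O(\epsilon^2)$) close the estimate. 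The main obstacle is Step 2: proving the Riemann-sum approximation with the correct normalization $Z_n(L)$ and sharp error $\delta(L)$, and in particular isolating the logarithmic correction and the explicit $\mathcal{C}(g)$ term when $n=2$; all other ingredients are comparatively routine adaptations of \cite{BGHS1,BGHS2}.
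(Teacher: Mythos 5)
Your Steps 2 and 3 follow the paper's route (the circle-method lattice-sum approximation is quoted as a black box from \cite{BGHS1}, and the closure is a Gr\"onwall/bootstrap argument on $b_K - g$), and you correctly observe that the vector structure only enters through the bounded multiplier $K(j)K(m)/|K|^2$. The genuine gap is in Step~1: a \emph{single} normal-form step does not suffice, and your claim that the resulting quintic remainder is ``negligible on the time window $[0,MT_R]$'' is false under the stated hypothesis. After one substitution the equation for $b_K$ carries a quintic forcing term $\epsilon^4\,\frac{\delta H_3}{\delta u}\big[\Delta^{-1}\nabla\nabla\cdot(|u|^2u)\big]$ whose non-resonant part admits no better bound than $O(\epsilon^4 L^{0+})$ in $X^l_n$ per unit time (there is no resonance constraint to shrink the lattice count, and the oscillation has not been exploited). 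Integrated over $[0,MT_R]$ with $T_R=L^{2n}/(\epsilon^2 Z_n(L))\sim L^2/\epsilon^2$, this contributes $O(\epsilon^2 L^{2+})$, which is \emph{not} controlled by $\epsilon^2L^\gamma\ll 1$ for $\gamma<1$. Integrating by parts once more only trades this for $\epsilon^4 L^{2+}$, still large. This is precisely why the paper iterates the normal form to order $P$, setting $v_j=u_j+\sum_{d=1}^P\epsilon^{2d}H^j_{2d+1}(u)$ with the $H^j_{2d+1}$ defined recursively in \eqref{H_2d+1-eqn}: the final remainder is then $O(\epsilon^{2(P+1)}L^\gamma)$ per unit time, hence $O(\epsilon^{2P}L^{2+\gamma})$ after time integration, and one must choose $P$ large depending on $\gamma$ (roughly $P>2/\gamma+1$) to close. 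The intermediate resonant terms of each order $2\le d\le P$ then survive as the term $II$ in the paper's error equation and must be estimated separately via Proposition~\ref{Th4-BGHS1} and Lemma~\ref{L:lm13-BGHS1}; your proposal omits them entirely.

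A secondary imprecision: you assert the normal-form symbol is ``bounded with at most polynomial loss in $L$, which is absorbed by the prefactor under $\epsilon^2L^\gamma\ll1$.'' The crude bound $1/|\Omega_3|\le L^2$ would give a loss that is \emph{not} absorbed (since $\epsilon^2L^2$ need not be small when $\gamma<1$); what is actually needed, and what Lemma~\ref{L:lm13-BGHS1} provides via the lattice-counting argument of \cite{BGHS1}, is the much finer bound $\|H_{2d+1}(u)\|_{X^l}\lesssim L^{0+}\|u\|^{2d+1}_{X^l_n}$, so that the boundary terms contribute only $O(\epsilon^2L^\gamma)$.
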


When $n=2$, Theorem \ref{th1} can be improved so that the term $(\log L)^{-1}$ in the error can be replaced by a polynomially decaying term:

\begin{theorem} \label{th2}
Let $n=2$. Fix $l>4$ and $0 < \gamma <1$. Let $g_0 \in X^{l+6, 15}_2 (\mathbb{R}^2)$, and suppose that $g(t, \xi)$ is a solution of \eqref{CR-n=2} over a time interval $[0, M]$ with initial data $g_0 = g(t=0)$. Denote
\begin{equation*}
B := \sup_{t \in [0, M] } \|g(t)\|_{ X^{l+6, 15}_2 (\mathbb{R}^2) } .
\end{equation*}
Let $u$ be a solution of \eqref{VNLS-rescaled} with initial data $u_0 = \frac{1}{L^2} \sum_{\mathbb{Z}_L^2} g_0 (K) e(K \cdot x)$, and set for $K \in \mathbb{Z}_L^2$
\begin{equation*}
a_K (t) := u_K (t) e(-|K|^2 t) .
\end{equation*} 
Then for $L$ sufficiently large and $\epsilon^2 L^\gamma$ sufficiently small, there exists a constant $C_{\gamma, M , B}$, such that for all $t \in [0, MT_R]$,
\begin{equation}
\Big\| a_K(t) - g(\frac{t}{T_R}, K)  \Big\|_{X^l_2 (\mathbb{Z}_L^2)} \lesssim C_{\gamma, M, B} ( \frac{1}{L^{1/3}-\gamma} + \epsilon^2 L^\gamma) ,
\end{equation}
where
\begin{equation}
T_R := \frac{L^{4}}{\epsilon^2 Z_2 (L)} = \frac{\zeta (2) L^2}{\epsilon^2 \log L} .
\end{equation}
\end{theorem}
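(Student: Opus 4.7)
The plan is to parallel the proof of Theorem \ref{th1}, with two essential modifications tailored to $n=2$: first, the CR equation \eqref{CR-n=2} carries an additional correction term $\mathcal{C}$ which absorbs the leading-order logarithmic contribution of the resonant sum that cannot be approximated polynomially by a Riemann integral; second, the number-theoretic estimate controlling the Riemann-sum-to-integral comparison on the two-dimensional resonant manifold must be sharpened to yield a polynomially decaying error.

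I would begin by expanding $u$ in Fourier series, passing to the interaction-picture variable $a_K$, and splitting the right-hand side into resonant and non-resonant pieces exactly as in the introduction. A normal form transformation then removes the non-resonant oscillations, producing an effective equation in which that contribution is controlled by $O(\epsilon^2 L^\gamma)$ in $X^l_2$. This part of the analysis is essentially dimension-independent once the exponents are matched, and follows the same mechanism as in the proof of Theorem \ref{th1}.

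The heart of the $n=2$ improvement lies in the resonant piece. For $n \geq 3$ the standard Riemann-sum-to-integral comparison on the joint zero set $\{S_3 = 0,\ \Omega_3 = 0\}$ yields an error of order $L^{-1+\gamma}$, whereas for $n=2$ the transverse intersection degenerates and the leading term of the resonant sum is of order $L^{-2}\log L$, forcing the rescaling $Z_2(L) = \zeta(2)^{-1} L^2 \log L$ and producing the explicit correction operator $\mathcal{C}$ in \eqref{CR-n=2}. To beat the naive $(\log L)^{-1}$ error, I would isolate $\mathcal{C}$ as the near-diagonal contribution (the piece where the two wavevectors $K_1 - K$ and $K_3 - K$ are nearly parallel), and then show that the residual off-diagonal sum is amenable to sharper lattice-point counting on circles: classical Gauss-circle-type bounds together with van der Corput exponential-sum estimates give a saving of order $L^{-1/3}$, which, combined with the loss $L^\gamma$ from truncating high Fourier modes and differentiating the oscillatory phases, yields the advertised $L^{-1/3+\gamma}$ error. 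The extra regularity hypothesis $g_0 \in X^{l+6,15}_2(\mathbb{R}^2)$ is designed precisely to afford these differentiations.

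Finally, I would close the argument by a standard bootstrap/continuity scheme on $[0, MT_R]$ with $T_R = \zeta(2) L^2 / (\epsilon^2 \log L)$. Using the hypothesized bound $B$ on $\|g\|_{X^{l+6, 15}_2}$, the trilinear smoothing enjoyed by $\mathcal{T}$, and the Lipschitz dependence of $\mathcal{C}$ on $g$, I propagate the estimate $\|a_K(t) - g(t/T_R, K)\|_{X^l_2} \lesssim C_{\gamma, M, B}(L^{-1/3+\gamma} + \epsilon^2 L^\gamma)$ out to the full resonant time scale; the loss in regularity and decay from $X^{l+6, 15}_2$ down to $X^l_2$ is absorbed precisely by the number-theoretic error terms. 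The main obstacle is the sharpened number-theoretic step above: cleanly separating $\mathcal{C}$ from the residual sum and obtaining a uniform $L^{-1/3+\gamma}$ bound on that residual for all admissible external momenta, while using the regularity of $g$ to tame the resulting weighted exponential sums. The remaining bookkeeping with the curl-free constraint $g(K) = KG(K)$ is compatible with every step, since both $\mathcal{T}$ and $\mathcal{C}$ preserve this structure.
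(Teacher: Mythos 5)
Your proposal follows essentially the same route as the paper: the paper proves this theorem by rerunning the normal-form-plus-bootstrap argument of Theorem \ref{th1} verbatim, simply replacing the resonant-sum comparison term $I$ by $\tilde I$ (which subtracts $Z_2(L)\big[\mathcal{T}_j(g,g,g)+\tfrac{\zeta(2)}{\log L}\mathcal{C}(g)\big]$ instead of $Z_2(L)\mathcal{T}_j(g,g,g)$) and invoking the bound \eqref{Th5-BGHS1-eq2}. The only divergence is that the paper imports the correction operator $\mathcal{C}$ and the $L^{-1/3+}$ estimate as a black box from Proposition \ref{Th5-BGHS1} (Theorem 5 of \cite{BGHS1}), whereas you propose to re-derive them via near-diagonal splitting and Gauss-circle/van der Corput counting --- a heavier burden than the paper takes on, and the one place where your sketch would need to be made precise if you did not simply cite that result.
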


The key tool of the proofs of Theorem \ref{th1} and Theorem \ref{th2} are the estimates on lattice sums and resonant sums obtained by utilizing the Hardy-Littlewood circle method, as shown in \cite{BGHS1}, see Section \ref{S:Prelim-est}. 

We also consider the following coupled Nonlinear Schr\"odinger Equations (NLS) with an $n$-dimensional spatial variable and $d$ components:
\begin{equation} \label{NLS}
-i\partial_t E + \frac{1}{2 \pi} \Delta E =  |E|^2 E , \ E|_{t=0} = \epsilon E_0 ,
\end{equation}
with $E = E(x, t) : \mathbb{T}^n_L \times \mathbb{R} \rightarrow \mathbb{C}^d$ being a complex-valued vector field, and $E_0 = E_0(x) : \mathbb{T}^n_L  \rightarrow \mathbb{C}^d$ is a fixed complex-valued vector field. By the ansatz $E= \epsilon u$ which represents the selection of a small data of size $\sim \epsilon$, the equation for $u$ becomes
\begin{equation} \label{NLS-rescaled}
-i\partial_t u + \frac{1}{2 \pi} \Delta u = \epsilon^2   (|u|^2 u) , \ u|_{t=0} =  E_0 ,
\end{equation}

We can obtain results that are parallel to Theorem \ref{th1} and Theorem \ref{th2} for \eqref{NLS}:

\begin{theorem} \label{th1-NLS}
Fix $l>2n$ and $0 < \gamma <1$. Let $g_0 \in X^{l+n+2, 3n+3}_d (\mathbb{R}^n)$, and suppose that $g(t, \xi)$ is a solution of the following \emph{continuous resonant equation}
\begin{equation} \label{CR-NLS}
-i\partial_t g(t,K)= \mathcal{T} (g(t, \cdot) , g(t, \cdot) , g(t, \cdot)) (t, K) , \ K \in \mathbb{R}^n 
\end{equation}
over a time interval $[0, M]$ with initial data $g_0 = g(t=0)$. Here $\mathcal{T} = (\mathcal{T}_j )_{j=1, 2, \cdots , d} $, 
\begin{equation*}
\mathcal{T}_j (g, g, g) (K) :=  \int_{\mathbb{R}^{3n}} \big(\sum_{l=1}^d g_l (K_1) \overline{g}_l (K_2) \big) g_j (K_3) \delta_{\mathbb{R}^n} (S_3 (K)) \delta_{\mathbb{R}} (\Omega_3 (K)) dK_1 dK_2 dK_3  .
\end{equation*}
Denote
\begin{equation*}
B := \sup_{t \in [0, M] } \|g(t)\|_{ X^{l+n+2, 3n+3}_n (\mathbb{R}^n) } .
\end{equation*}
Let $u$ be a solution of \eqref{NLS-rescaled} with initial data $u_0 = \frac{1}{L^n} \sum_{\mathbb{Z}_L^n} g_0 (K) e(K \cdot x)$, and set for $K \in \mathbb{Z}_L^n$
\begin{equation*}
a_K (t) := u_K (t) e(-|K|^2 t) .
\end{equation*} 
Then for $L$ sufficiently large and $\epsilon^2 L^\gamma$ sufficiently small, there exists a constant $C_{\gamma, M , B}$, such that for all $t \in [0, MT_R]$,
\begin{equation}
\Big\| a_K(t) - g(\frac{t}{T_R}, K)  \Big\|_{X^l_n (\mathbb{Z}_L^n)} \lesssim_{n, d} C_{\gamma, M, B} (\delta (L) + \epsilon^2 L^\gamma) ,
\end{equation}
where
\begin{equation}
T_R := \frac{L^{2n}}{\epsilon^2 Z_n (L)} .
\end{equation}
\end{theorem}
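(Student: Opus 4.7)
The plan is to follow the architecture of the proof of Theorem \ref{th1}, noting that the coupled NLS \eqref{NLS-rescaled} is in fact simpler than \eqref{VNLS-rescaled}: the multiplier $\tfrac{K(j)}{|K|^2}\sum_{m} K(m)$ and the irrotationality constraint are absent, and the dependence on the number of components $d$ is harmless, contributing only an overall factor of $d$ through the inner sum $\sum_{l=1}^d$. Concretely, expand
\begin{equation*}
u(t,x)=\frac{1}{L^n}\sum_{K\in\mathbb{Z}_L^n} u_K(t)\,e(K\cdot x),\qquad a_K(t):=u_K(t)\,e(-|K|^2 t),
\end{equation*}
so that each component satisfies
\begin{equation*}
-i\partial_t a_{j,K}=\frac{\epsilon^2}{L^{2n}}\sum_{S_3(K)=0}\Bigl(\sum_{l=1}^d a_{l,K_1}\overline{a}_{l,K_2}\Bigr)a_{j,K_3}\,e\bigl(\Omega_3(K)t\bigr).
\end{equation*}
Split the sum into the resonant part ($\Omega_3(K)=0$) and the non-resonant part ($\Omega_3(K)\neq 0$).

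Next, apply a Poincar\'e--Dulac normal form $b_K=a_K+\epsilon^2\Phi(a)_K$, where $\Phi$ is the trilinear expression obtained from the non-resonant summand by dividing by $2\pi i\Omega_3(K)$ and integrating the oscillatory phase by parts. The non-resonant quadratic forcing in the $b$-equation is thereby replaced by quintic terms of size $O(\epsilon^4)$. The mapping properties of $\Phi$ and of the resulting quintic error on $X^l_n(\mathbb{Z}_L^n)$ follow from the small-divisor and trilinear convolution bounds of Section \ref{S:Prelim-est}; the inner component sum only costs a factor $d$ in every step.

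The decisive step is the Riemann-sum approximation of the resonant sum by the continuous operator $\mathcal{T}_j$. Invoking the Hardy--Littlewood circle-method estimates of \cite{BGHS1} used in the proof of Theorem \ref{th1}, one obtains
\begin{equation*}
\frac{1}{L^{2n}}\!\!\sum_{\substack{S_3(K)=0\\ \Omega_3(K)=0}}\Bigl(\sum_{l=1}^d b_{l,K_1}\overline{b}_{l,K_2}\Bigr)b_{j,K_3}
=\frac{Z_n(L)}{L^{2n}}\,\mathcal{T}_j(b,b,b)(K)+O\!\left(\delta(L)\,\frac{Z_n(L)}{L^{2n}}\right)
\end{equation*}
in $X^l_n(\mathbb{Z}_L^n)$, provided $b\in X^{l+n+2,3n+3}_n$. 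This is the main obstacle I expect: one must control, with the weight $\langle K\rangle^l$ and up to $3n+3$ derivatives, both the count of lattice points on the codimension-$(n+1)$ resonant set $\{S_3=0,\Omega_3=0\}$ and the Riemann-sum error against the smooth integrand. The argument is exactly that of \cite{BGHS1} as transcribed in Section \ref{S:Prelim-est}, applied component-wise; the absence of the projection multiplier and the harmless inner sum $\sum_{l=1}^d$ mean no new number-theoretic input is needed.

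Finally, rescaling time $\tau=t/T_R$, the equation for $b$ reads
\begin{equation*}
-i\partial_\tau b_K(\tau)=\mathcal{T}(b,b,b)(K)+O\bigl(\delta(L)+\epsilon^2 L^\gamma\bigr)\quad\text{in }X^l_n(\mathbb{Z}_L^n).
\end{equation*}
A standard Gronwall/continuity argument, using the Lipschitz estimate for $\mathcal{T}$ on bounded subsets of $X^{l+n+2,3n+3}_n$ (supplied by the a priori bound $B$ on $g$), together with $\|a_K-b_K\|_{X^l_n}\lesssim\epsilon^2$, closes the comparison with $g(t/T_R,K)$ on the interval $[0,MT_R]$ and yields the stated error $C_{\gamma,M,B}(\delta(L)+\epsilon^2 L^\gamma)$, with the implicit constant absorbing a factor depending on $n$ and $d$.
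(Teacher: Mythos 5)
Your overall architecture is exactly the one the paper uses: the paper in fact omits the proof of Theorem \ref{th1-NLS}, remarking only that it is a simpler version of the proof of Theorem \ref{th1} (no projection multiplier $\frac{K(j)}{|K|^2}\sum_m K(m)$, and the component sum $\sum_{l=1}^d$ costs only a factor of $d$), and your Fourier expansion, normal form, circle-method approximation of the resonant sum by $\mathcal{T}_j$, and Gronwall closure follow that template faithfully.

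There is, however, one genuine quantitative gap: a \emph{single} normal form step $b_K=a_K+\epsilon^2\Phi(a)_K$ is not enough. The quintic remainder it produces is of size $O(\epsilon^4 L^{\gamma})$ in $X^l_n$ per unit time (before rescaling), and the theorem must hold on the interval $[0,MT_R]$ with $T_R=L^{2n}/(\epsilon^2 Z_n(L))\sim L^2/\epsilon^2$ for $n\ge 3$. Integrating the remainder over this interval yields a contribution of order $M\epsilon^{2}L^{2+\gamma}$, which is \emph{not} controlled by the hypothesis that $\epsilon^2L^{\gamma}$ is small (that only gives $\epsilon\lesssim L^{-\gamma/2}$ with $\gamma<1$, whereas you would need $\epsilon\lesssim L^{-1-\gamma/2}$). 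This is precisely why the proof of Theorem \ref{th1} iterates the transformation, $v_j=u_j+\sum_{d=1}^{P}\epsilon^{2d}H^j_{2d+1}(u)$, retaining the resonant parts of the higher-order terms (the term $II$ there, which is acceptable because it carries the extra factor $\epsilon^2 Z_n(L)/L^{2n}$) and pushing the genuinely non-resonant remainder to order $\epsilon^{2(P+1)}$, with $P$ chosen large depending on $\gamma$ so that $M\epsilon^{2P}L^{2+\gamma}$ is dominated by $\epsilon^2L^{\gamma}$. You need the same iteration here; once it is in place, the rest of your argument (the estimate of Proposition \ref{Th5-BGHS1} applied component-wise for the Riemann-sum error $\delta(L)$, and the bootstrap/Gronwall step) goes through as you describe.
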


Similarly as Theorem \ref{th2}, when $n=2$, Theorem \ref{th1-NLS} can be improved to obtain:

\begin{theorem} \label{th2-NLS}
Let $n=2$. Fix $l>4$ and $0 < \gamma <1$. Let $g_0 \in X^{l+6, 15}_d (\mathbb{R}^2)$, and suppose that $g(t, \xi)$ is a solution of the following \emph{continuous resonant equation}
\begin{equation} \label{CR-NLS-n=2}
-i\partial_t g(t,K)= \mathcal{T} (g(t, \cdot) , g(t, \cdot) , g(t, \cdot)) (t, K) + \frac{\zeta(2)}{\log L} \mathcal{C} (g(t, \cdot)) (t, K) , \ K \in \mathbb{R}^2 .
\end{equation}
over a time interval $[0, M]$ with initial data $g_0 = g(t=0)$. Here $\mathcal{T} = (\mathcal{T}_j )_{j=1, 2, \cdots , d} $, 
\begin{equation*}
\mathcal{T}_j (g, g, g) (K) :=   \int_{\mathbb{R}^{3n}} \big(\sum_{l=1}^d g_l (K_1) \overline{g}_l (K_2) \big) g_j (K_3) \delta_{\mathbb{R}^n} (S_3 (K)) \delta_{\mathbb{R}} (\Omega_3 (K)) dK_1 dK_2 dK_3  , 
\end{equation*} 
and $\mathcal{C} (g(t, \cdot))$ is a correction term defined in \eqref{Th5-BGHS1-eq1} and satisfies \eqref{Th5-BGHS1-eq2} in below. Denote
\begin{equation*}
B := \sup_{t \in [0, M] } \|g(t)\|_{ X^{l+6, 15}_d (\mathbb{R}^2) } .
\end{equation*}
Let $u$ be a solution of \eqref{NLS-rescaled} with initial data $u_0 = \frac{1}{L^2} \sum_{\mathbb{Z}_L^2} g_0 (K) e(K \cdot x)$, and set for $K \in \mathbb{Z}_L^2$
\begin{equation*}
a_K (t) := u_K (t) e(-|K|^2 t) .
\end{equation*} 
Then for $L$ sufficiently large and $\epsilon^2 L^\gamma$ sufficiently small, there exists a constant $C_{\gamma, M , B}$, such that for all $t \in [0, MT_R]$,
\begin{equation}
\Big\| a_K(t) - g(\frac{t}{T_R}, K)  \Big\|_{X^l_d (\mathbb{Z}_L^2)} \lesssim_{d} C_{\gamma, M, B} ( \frac{1}{L^{1/3}-\gamma} + \epsilon^2 L^\gamma) ,
\end{equation}
where
\begin{equation}
T_R := \frac{L^{4}}{\epsilon^2 Z_2 (L)} = \frac{\zeta (2) L^2}{\epsilon^2 \log L} .
\end{equation}
\end{theorem}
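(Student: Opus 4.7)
The plan is to follow the blueprint of Theorem \ref{th2}, taking advantage of the fact that the scalar NLS nonlinearity $|u|^2 u$ is a simplification of the VNLS nonlinearity without the curl-free projection $\Delta^{-1}\nabla\nabla\cdot$. Concretely, I would expand $u$ in Fourier modes on $\mathbb{T}_L^2$, set $a_{j,K}(t) := u_{j,K}(t)\, e(-|K|^2 t)$, and derive the profile equation
\begin{equation*}
-i\partial_t a_{j,K}(t) = \frac{\epsilon^2}{L^{4}} \sum_{S_3(K)=0} \Big(\sum_{l=1}^d a_{l,K_1}(t)\, \overline{a_{l,K_2}(t)}\Big)\, a_{j,K_3}(t)\, e\bigl(\Omega_3(K)\, t\bigr),
\end{equation*}
then rescale time by $T_R = \zeta(2) L^2 /(\epsilon^2 \log L)$ and split the right-hand side into the resonant piece $\Omega_3(K)=0$ and the purely oscillatory piece $\Omega_3(K)\neq 0$.

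First, I would apply a normal-form transformation to the non-resonant piece, integrating by parts in $t$ against the phase $e(\Omega_3(K) t)$. Using the small-divisor and quintic lattice-sum estimates from Section \ref{S:Prelim-est}, the resulting remainder is controlled in $X^l_d(\mathbb{Z}_L^2)$ by $\epsilon^2 L^\gamma$, which is absorbed by the hypothesis $\epsilon^2 L^\gamma \ll 1$. This step is formally identical to its VNLS counterpart in the proof of Theorem \ref{th2}, with the trilinear symbol $\delta_{jm}$ replacing the curl-free multiplier $\frac{K(j)K(m)}{|K|^2}$; since the latter is a bounded multiplier on $X^l$ it plays no quantitative role, and the $d$-component inner sum $\sum_{l=1}^d a_{l,K_1}\overline{a_{l,K_2}}$ is handled by the same multilinear bounds as the $n$-component VNLS sum, up to the combinatorial constant reflected in the $\lesssim_d$ in the statement.

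Next I would approximate the rescaled resonant lattice sum by the continuous integral $\mathcal{T}_j(g,g,g)(K)$ plus the correction $\frac{\zeta(2)}{\log L}\, \mathcal{C}(g)(K)$, by invoking the $n=2$ Riemann-sum-type estimates of Section \ref{S:Prelim-est}. The key two-dimensional feature is that the normalization $Z_2(L) = L^2 \log L / \zeta(2)$ absorbs a logarithmic divergence coming from degenerate configurations (pairings $K_1=K$ or $K_3=K$); their sub-leading contribution survives as $\frac{\zeta(2)}{\log L}\, \mathcal{C}(g)$ in the effective equation \eqref{CR-NLS-n=2}, while the remaining lattice error is of size $L^{-1/3+\gamma}$. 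Writing $b_K(\tau) := a_K(T_R \tau)$, the equation for $b_K - g$ then has right-hand side that is Lipschitz in $X^l_d$ by the tame bounds for $\mathcal{T}$ and $\mathcal{C}$ on data in $X^{l+6, 15}_d$, and a standard Gronwall / continuity argument on $[0, M]$ closes the estimate with the stated error $L^{-1/3+\gamma} + \epsilon^2 L^\gamma$.

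The main obstacle, and the reason the $n=2$ case requires the correction term $\mathcal{C}$ at all, is the sharp $L^{-1/3+\gamma}$ error in the lattice-to-integral approximation: this is precisely where the Hardy-Littlewood circle method enters non-trivially. The delicate point is that the dominant errors in the Riemann-sum approximation concentrate on degenerate configurations on the resonant manifold $\{\Omega_3=0\}$ and must be isolated and identified with $\mathcal{C}$ via a Weyl-type sum analysis on $\mathbb{Z}_L^2$, as carried out in Section \ref{S:Prelim-est}. Once those lattice-sum bounds are granted, the passage from Theorem \ref{th2} to Theorem \ref{th2-NLS} is essentially bookkeeping: drop the curl-free projection symbol, replace the sum over $n$ vector components by the sum over $d$ components, and update the resulting implicit constants to $\lesssim_d$.
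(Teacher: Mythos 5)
Your proposal is correct and follows essentially the same route as the paper: the paper proves Theorem \ref{th2-NLS} by observing that the argument for Theorem \ref{th2} (normal form transformation, replacement of the resonant lattice sum by $\mathcal{T} + \frac{\zeta(2)}{\log L}\mathcal{C}$ via the circle-method estimates of Proposition \ref{Th5-BGHS1}, then Gronwall and a bootstrap on $[0, MT_R]$) carries over verbatim and is in fact simpler once the bounded multiplier $\frac{K(j)K(m)}{|K|^2}$ is dropped and the $n$-component inner sum is replaced by the $d$-component one. Your description of where $\mathcal{C}$ originates is an added heuristic not checked in the paper (which simply imports $\mathcal{C}$ and the bounds \eqref{Th5-BGHS1-eq2} from \cite{BGHS1}), but this does not affect the argument.
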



Another goal of this paper is to investigate various properties of the continuous resonant equations, including the Hamiltonian structure and the well-posedness, etc.. We will point out that the CR equations have Hamiltonian structures parallel to the structure of \eqref{VNLS} and \eqref{NLS}. We are also going to observe symmetries, the corresponding conservation laws as well as some well-posedness properties for the CR equations. Moreover, we will study the stationary solutions to the CR equations in a variational point of view, yielding some existence results. These directions have been pursued in varied literature for the CR equation for the NLS equation with one component $-i\partial_t u + \frac{1}{2 \pi} \Delta u =  |u|^2 u $, $u= u(t, x) \in \mathbb{C}$, $x \in \mathbb{R}^n$, see, for example, \cite{BGHS2}, \cite{GHTho1}, \cite{GHTho2}, \cite{Fennell1}.   



The contents in the paper are arranged as follows. In Section \ref{S:Prelim-est}, we give some lemmas which are key tools on evaluating the sums on the lattices and the resonant terms, which are provided in \cite{BGHS1}. Then, we perform normal form transformation to eliminate all the non-resonant term in Section \ref{S:Normal-form-transformation}. The proof of the main results Theorem \ref{th1} and Theorem \ref{th2} are given in Section \ref{S:Proof-main-th}, utilizing the results in Section \ref{S:Prelim-est} and Section \ref{S:Normal-form-transformation}. In Section \ref{S:General-case-p}, we consider a generalized version of \eqref{VNLS} whose nonlinearity is of power $2p+1$ (see \eqref{VNLS-p}), and derive the CR equation for it, see Theorem \ref{th3} and Theorem \ref{th4}, which are parallel to Theorem \ref{th1} and Theorem \ref{th2}, respectively. Section \ref{S:CR-analysis} and Section \ref{S:CR-NLS-analysis} are devoted to analysis on the CR equations \eqref{CR} and \eqref{CR-NLS}, respectively. These include the Hamiltonian structures, symmetries and conserved quantities, local well-posedness and results on stationary waves. Moreover, for \eqref{CR-NLS}, we briefly discuss the dynamics of the equation on the eigenspaces of the harmonic oscillator when $n=2$. 



\section{Preliminary Estimates}
\label{S:Prelim-est}


We recall the following estimates on lattice sums and resonant sums obtained in \cite{BGHS1}:

\begin{proposition} [Theorem 4 in \cite{BGHS1}]
\label{Th4-BGHS1}
Let $K \in \mathbb{Z}_L^n$, and 
\begin{equation*}
R_\mu (K) := \{ K_i \in \mathbb{Z}_L^n ; \ S_3 (K) = K_1 -K_2 + K_3 -K =0 , \   \Omega_3 (K) = |K_1 |^2 - |K_2 |^2 + |K_3 |^2 -|K|^2 =\mu \} .
\end{equation*} 
Given sequences $\{ a_K \}$, $\{ b_K \}$ and $\{ c_K \}$, such that
\begin{equation*}
|a_K|+ |b_K| + |c_K| \lesssim \la K \ra^{-l} ,
\end{equation*}
we have, for $l > 3n+2$,
\[ \sup_{K, \mu} \la K \ra^{l} \sum_{R_\mu (K)} a_{K_1} b_{K_2} c_{K_3} \lesssim Z_n (L) \lesssim  \begin{cases} 
          L^2 \log L  \  \text{ if } n=2, \\
          L^{2n-2}  \ \ \ \ \text{ if } n \geq 3 .
       \end{cases}
\]
\end{proposition}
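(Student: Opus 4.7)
The plan is to turn the resonant sum into a lattice-point count on a quadric surface and combine that count with the decay of the weights via a dyadic decomposition. First I would use the constraint $S_3(K)=0$ to eliminate $K_2=K_1+K_3-K$. A direct expansion gives the factorization
\begin{equation*}
\Omega_3(K)=|K_1|^2-|K_1+K_3-K|^2+|K_3|^2-|K|^2=-2(K-K_1)\cdot(K-K_3),
\end{equation*}
so setting $P=K-K_1$ and $Q=K-K_3$ the set $R_\mu(K)$ is in bijection with $\{(P,Q)\in(\mathbb{Z}_L^n)^2 :\ P\cdot Q=-\mu/2\}$, and the sum reads $\sum_{P\cdot Q=-\mu/2} a_{K-P}\, b_{K-P-Q}\, c_{K-Q}$. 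Using $|K|=|K_1-K_2+K_3|\lesssim \max_i |K_i|$, at least one of the three factors $\la K_i\ra^{-l}$ absorbs the prefactor $\la K\ra^l$, and after symmetric bookkeeping the problem is reduced to showing
\begin{equation*}
\sup_{K,\mu}\sum_{\substack{(P,Q)\in(\mathbb{Z}_L^n)^2 \\ P\cdot Q=-\mu/2}} \la K-P\ra^{-l}\la K-Q\ra^{-l}\lesssim Z_n(L).
\end{equation*}

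I would then perform a dyadic decomposition $|P|\sim 2^j$, $|Q|\sim 2^k$. When both $2^j, 2^k\lesssim \la K\ra$, the weight $\la K-P\ra^{-l}\la K-Q\ra^{-l}$ is essentially $\la K\ra^{-2l}$ and only the lattice count is needed; when one of $|P|,|Q|$ is large compared to $\la K\ra$, the decay of the weight in $P$ (respectively $Q$) provides geometric smallness in $2^j,2^k$ that is summable thanks to $l>3n+2$. The residual is a pure lattice-point count on the quadric $P\cdot Q=c$. Rescaling $(m,\tilde n)=(LP,LQ)$ puts the constraint into the integer form $m\cdot \tilde n=N$ with $N=-\mu L^2/2$, and for fixed $m\neq 0$ the slice is an affine hyperplane whose intersection with a box of side $\sim L$ contains $\lesssim L^{n-1}/|m|$ lattice points. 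Summing over $|m|\lesssim L$ gives $\lesssim L^{2n-2}$ for $n\geq 3$. For $n=2$ the corresponding sum $\sum_{|m|\lesssim L} L/|m|$ is $O(L^2)$ in a single dyadic scale, but the worst case is the near-degenerate $\mu\approx 0$, where the reorganization over primitive directions $\sum_{g\leq L}\sum_{m'\text{ primitive},\,|m'|\leq L/g} g/|m'|$ produces an additional $\log L$, matching the $L^2\log L$ entry of $Z_n(L)$.

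The main obstacle is the uniformity in $\mu$ and $K$ in dimension $n=2$, where the variety $P\cdot Q=c$ degenerates to a union of lines at $c=0$ and the naive hyperplane count is saturated by coherent contributions from primitive lattice directions; obtaining the sharp $\log L$ rather than a power loss is what requires the Hardy--Littlewood circle method and divisor estimates used in \cite{BGHS1}. The threshold $l>3n+2$ is chosen precisely so that the dyadic shells away from the bulk contribute an absolutely convergent tail uniformly in $K$ and $\mu$. Once these number-theoretic inputs are granted, the assembly of the dyadic contributions is routine and yields the claimed bound.
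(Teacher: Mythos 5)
You should first be aware that the paper does not prove this proposition: it is imported verbatim as Theorem 4 of \cite{BGHS1}, and the only ``proof'' in the paper is that citation. Your skeleton is the standard one used there: the factorization $\Omega_3(K) = -2(K-K_1)\cdot(K-K_3)$ is correct, the reduction to a weighted lattice-point count on the quadric $P\cdot Q = c$ is the right move, and you correctly locate the extra $\log L$ in $n=2$ in the degenerate, primitive-direction contributions. So the route is the right one; the problems are in the execution.

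The main gap is in the dyadic decomposition and, downstream of it, in the counting input you actually invoke. The weights $\la K-P\ra^{-l}\la K-Q\ra^{-l}$ peak at $P=Q=K$, so in the block $|P|\sim|Q|\sim\la K\ra$ --- which for $|K|\gg 1$ is where essentially all of the mass sits --- the weight is $O(1)$, not ``essentially $\la K\ra^{-2l}$'' as you assert; your scheme assigns the smallest weight to the dominant regime, and the argument as written is not valid for large $|K|$. The decomposition has to be organized by $|K-P|\sim 2^j$, $|K-Q|\sim 2^k$ (equivalently by $|K_1|$, $|K_3|$), and then the count one needs is of lattice points on $P\cdot Q=c$ inside balls \emph{centered at $K$}, with a bound $\lesssim Z_n(L)$ uniform in the center; your rescaled count in a box of side $\sim L$ about the origin only covers $|K|\lesssim 1$, and this uniformity in $K$ (where, after rescaling, $|m|$ can be of size $L|K|\gg L$, the successive minima of $\{x:\, m\cdot x=0\}$ can exceed the ball radius, and the additive corrections $+L^{n-2}+\cdots+1$ to the slice count $L^{n-1}/|m|$ genuinely matter) is precisely the nontrivial content of the theorem. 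Two smaller points: the ``symmetric bookkeeping'' conceals the two absorption cases in which the surviving weight is $\la K-P-Q\ra^{-l}\la K-Q\ra^{-l}$, where the shear restoring product form turns the constraint into $P'\cdot Q=|Q|^2-\mu/2$ with a $Q$-dependent right-hand side (harmless for hyperplane slicing, but it must be said); and for this upper bound the circle method is not actually needed --- elementary divisor and primitive-vector estimates suffice, the circle method being what \cite{BGHS1} uses for the asymptotic in Proposition~\ref{Th5-BGHS1} --- so deferring the $n=2$ count to it both overstates the required input and leaves the key estimate unproved.
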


\begin{proposition} [Theorem 5 in \cite{BGHS1}]
\label{Th5-BGHS1}
Let  $f_1$, $f_2$, $f_3 \in X^{l+n+2, N} (\mathbb{R}^n)$ for $l > 2n$ and $N > 3n+2$, and set 
\begin{equation}
W(z) = f_1 (K+z_1) \overline{f}_2 (K+z_1+z_2) f_3 (K+z_2) , \  z = (z_1, z_2) \in \mathbb{Z}_L^{2n}
\end{equation}
and
\begin{equation}
\mathcal{T}(W)(K) = \mathcal{T}(f_1, f_2, f_3) (K) := \int_{\mathcal{R}^{2n}} \delta(\omega (z)) W(z) dz ,
\end{equation}
with $\omega(z) = z_1 \cdot z_2 $. Then \\
1) For $n =2$, define
\begin{equation} \label{Th5-BGHS1-eq1}
\tilde{\Delta} (W) = \frac{1}{Z_2 (L)} \sum_{K_1 \cdot K_3 =0} W(K_1, K_3) - (\mathcal{T} (W) + \frac{\zeta(2)}{\log L}  \mathcal{C} (W)) ,
\end{equation}
where $\mathcal{C} (W)$ is a correction operator independent of $L$. 
If $f_j \in X^{l+6, N} (\mathbb{R}^2)$ for $l >4 $ and $N >14$, then
\begin{equation} \label{Th5-BGHS1-eq2}
\begin{split}
& \|\mathcal{C} (W)) \|_{X^l} \lesssim \prod_{j=1}^3 \|f_j\|_{X^{l+6,N} (\mathbb{R}^2)} , \\
& \|\tilde{\Delta}(W)) \|_{X^l} \lesssim L^{-1/3+} \prod_{j=1}^3 \|f_j\|_{X^{l+6,N} (\mathbb{R}^2)} . \\
\end{split}
\end{equation}
2) For $n \geq 3$, define
\begin{equation} \label{Th5-BGHS1-eq3}
\Delta (W) = \frac{1}{Z_n (L)} \sum_{z\in \mathbb{Z}_L^{2n}, \omega (z) =0} W(z) - \mathcal{T} (W) ,
\end{equation}
then
\[ \|\Delta (W) \|_{X^l} \lesssim  \prod_{j=1}^3 \|f_j\|_{X^{l+n+2,N} (\mathbb{R}^n)} 
  \begin{cases} 
          L^{-1} \log L  \  \text{ if } n=3, \\
          L^{-1}  \ \ \ \ \text{ if } n \geq 4 .
       \end{cases}
\]
\end{proposition}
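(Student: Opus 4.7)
The plan is to approximate the constrained lattice sum $\sum_{z \in \mathbb{Z}_L^{2n},\, \omega(z)=0} W(z)$ by the oscillatory integral $Z_n(L)\,\mathcal{T}(W)(K)$ via a version of the Hardy--Littlewood circle method. The first step is to note that for $z = (z_1,z_2) \in \mathbb{Z}_L^{2n}$ one has $\omega(z) = z_1\cdot z_2 \in L^{-2}\mathbb{Z}$, so the resonance indicator admits the integral representation
\[ \mathbb{1}_{\omega(z)=0} \;=\; \int_{-1/2}^{1/2} e\bigl(L^2 \alpha\, z_1\cdot z_2\bigr)\,d\alpha. \]
Exchanging the sum and integral reduces matters to understanding the exponential sum $S(\alpha;K) := \sum_{z\in\mathbb{Z}_L^{2n}} W(z)\, e(L^2\alpha\, z_1\cdot z_2)$ uniformly in $\alpha$, and comparing its average over $\alpha$ against the continuous analogue $\mathcal{T}(W)(K)$.

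The next step is to partition $[-1/2,1/2]$ into major arcs $\mathfrak{M}$, i.e.\ small neighborhoods of rationals $p/q$ with $q\le Q$ for a parameter $Q = Q(L)$ chosen according to the dimension, and minor arcs $\mathfrak{m}$. On the minor arcs, Dirichlet approximation combined with Weyl-type cancellation for the bilinear phase, together with repeated integration by parts exploiting the smoothness and decay of $W$ encoded in the $X^{l+n+2,N}$ norm, should provide a gain of $L^{-1}$ up to logarithmic losses. On a major arc near $p/q$, writing $\alpha = p/q + \beta/L^2$ decouples the sum into a Gauss sum in residues modulo $q$ and a smooth sum on the macroscopic scale; Poisson summation on the latter replaces the discrete sum by $L^{2n}$ times the continuous integral $\int_{\mathbb{R}^{2n}} W(z) e(\beta\, z_1\cdot z_2)\,dz$ plus rapidly decaying dual-lattice tails. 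Summing over major arcs produces $Z_n(L)\,\mathcal{T}(W)(K)$ as the leading term, with the constant $Z_n(L)$ emerging from the singular series $\sum_{q\ge 1} q^{-(2n-2)} \sum_{(a,q)=1} |S_q(a)|$ associated with the hyperbolic form $z_1\cdot z_2$.

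In dimensions $n\ge 3$ this singular series converges and the total Poisson/minor-arc error is $O(L^{2n-3})$, with a borderline $\log L$ appearing at the endpoint $n=3$; dividing by $Z_n(L)\sim L^{2n-2}$ then yields the stated bound on $\Delta(W)$. The main obstacle is the critical dimension $n=2$, where the diagonal contribution from pairs $(z_1,z_2)$ with $z_2$ proportional to $z_1^{\perp}$ produces $\sim L^2 \log L$ resonant points rather than $\sim L^2$, creating a logarithmic divergence in the formal continuous counterpart. I would isolate this divergent piece by a dyadic decomposition in $\beta$, match it against the large-$\alpha$ major-arc contributions in the discrete sum, and identify the finite-part limit with the $L$-independent correction operator $\mathcal{C}(W)$ appearing in \eqref{Th5-BGHS1-eq1}. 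Upgrading the remaining error from the naive $(\log L)^{-1}$ to the polynomial rate $L^{-1/3+}$ stated in \eqref{Th5-BGHS1-eq2} is the most delicate point: it requires subconvex input beyond Poisson summation, most naturally a Heath-Brown $\delta$-method treatment or sharp divisor estimates for quadrics in four variables, analogous to strong forms of the four-dimensional Gauss circle problem.
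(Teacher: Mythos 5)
This proposition is not proved in the paper at all: it is imported verbatim as Theorem~5 of \cite{BGHS1}, and the paper's ``proof'' is the citation. So there is no in-paper argument to compare against; the relevant comparison is with the circle-method analysis in \cite{BGHS1}, and your outline does follow that strategy (orthogonality representation of the resonance indicator using $L^2\omega(z)\in\mathbb{Z}$, major/minor arc dissection, Poisson summation on major arcs producing the continuous integral times a singular series, and a separate treatment of the critical case $n=2$ where the resonant set has $\sim L^2\log L$ points). At the level of strategy your sketch is the right one.

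As a proof, however, it asserts rather than establishes exactly the steps where the work lies. First, the minor-arc bound is waved through: for the bilinear phase $z_1\cdot z_2$ the standard move is to sum the linear exponential in $z_1$ for fixed $z_2$ (after integrating by parts against the smooth weight $W$), and one must make the resulting gain uniform in $K$ and quantify it against the choice of $Q(L)$; ``Weyl-type cancellation \ldots should provide a gain of $L^{-1}$'' is not an argument. Second, your singular series $\sum_{q}q^{-(2n-2)}\sum_{(a,q)=1}|S_q(a)|$ is misnormalized and should not carry absolute values: the complete sum for the hyperbolic form over $(\mathbb{Z}/q)^{2n}$ equals $q^{n}$ for $(a,q)=1$, and the correctly normalized series $\sum_q \phi(q)q^{-n}=\zeta(n-1)/\zeta(n)$ is what produces the constant in $Z_n(L)$; with your normalization the series does not even converge for $n=3,4$ and does not give the stated constant. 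Third, in the case $n=2$ the existence of an $L$-independent correction operator $\mathcal{C}(W)$ is the content of the statement, and ``identify the finite-part limit'' does not show that the divergent $\log L$ piece matches $\zeta(2)\,\mathcal{T}(W)$ to leading order with an $L$-independent remainder, nor does it prove the bound $\|\mathcal{C}(W)\|_{X^l}\lesssim\prod_j\|f_j\|_{X^{l+6,N}}$. Finally, the $L^{-1/3+}$ rate --- which you correctly flag as the most delicate point --- is only named (``$\delta$-method or sharp divisor estimates''), so the improvement from a logarithmic to a polynomial error, which is the whole point of part (1), is left entirely open. If you intend to reprove the proposition rather than cite it, these four items are the proof.
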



\begin{proposition} [Theorem 6 in \cite{BGHS1}]
\label{Th6-BGHS1}
Let $p \in \mathbb{N}$, $f_j \in X^{l, N} (\mathbb{R}^n)$, and denote $z = (\mathbf{J}_e , \mathbf{J}_o )$, where $\mathbf{J}_e = (J_1, J_3, \cdots , J_{2p-1}) \in \mathbb{Z}^{np}_L$, $\mathbf{J}_o = (J_2, J_4, \cdots , J_{2p}) \in \mathbb{Z}^{np}_L$, and $J_i$'s are given by
\begin{equation} \label{Th6-BGHS1-eq0}
\begin{split}
& J_1 = K_1 + K_3  \cdots - K_{2p} , \ J_2 = K_2 - K_1 , \\
& J_{2i-1} = K_{2i-1} + K_{2i+1} -K_{2i+2} \cdots - K_{2p} , \quad J_{2i} = K_{2i} - K_{2i-1} , \\
& J_{2p-1} = K_{2p-1} . \\
\end{split}
\end{equation}
Denote $W(z) = ( W_j (z) )_{j=1, 2, \cdots , n} $, and
\begin{equation}
\begin{split}
W_j (z) 
& =  \frac{K(j)}{|K|^2} \sum_{m=1}^n K(m) \big(\sum_{l=1}^n f_{1, l} (K + K_1)  \overline{f}_{2, l} (K + K_2) \big) \\
& \quad \cdots \big(\sum_{l=1}^n f_{2p-1, l} (K + K_{2p-1})  \overline{f}_{2p, l} (K + K_{2p} ) \big) \cdot f_{2p+1, m} (K + K_{2p+1}) , \\
\end{split}
\end{equation}
where $K_i$ are considered as functions of $J_i$ by inverting \eqref{Th6-BGHS1-eq0}, and 
\begin{equation}
\mathcal{P}(W)(K) = \int_{\mathbb{R}^{n(2p +1)}} \delta (\Omega_{2p+1} (K))   \delta (S_{2p+1} (K)) W(z) dK_1 dK_2 \cdots dK_{2p+1} .
\end{equation}
Set 
\[   Z_{pn} (L) := \begin{cases} 
           \frac{1}{\zeta(2)} L^2 \log L , \text{ when } pn =2 , \\
          \frac{\zeta(pn-1)}{\zeta(pn)} L^{2pn-2}  , \text{ when } pn \neq 2 .
       \end{cases}
\]
1) For $pn =2$, define
\begin{equation} \label{Th6-BGHS1-eq1}
\tilde{\Delta} (W) = \frac{1}{Z_2 (L)} \sum_{ \mathbf{J}_o \cdot \mathbf{J}_e =0 } W(z) - (\mathcal{P} (W) + \frac{\zeta(2)}{\log L}  \mathcal{C} (W)) ,
\end{equation}
where $\mathcal{C} (W)$ is a correction operator independent of $L$. 
If $f_j \in X^{l+6, 11} (\mathbb{R}^2)$, then
\begin{equation} \label{Th6-BGHS1-eq2}
\begin{split}
& \|\mathcal{C} (W)) \|_{X^l} \lesssim \prod_{j=1}^{2p+1} \|f_j\|_{X^{l+6,11} (\mathbb{R}^2)} , \\
& \|\tilde{\Delta}(W)) \|_{X^l} \lesssim L^{-1/3+} \prod_{j=1}^{2p+1} \|f_j\|_{X^{l+6,11} (\mathbb{R}^2)} . \\
\end{split}
\end{equation}
2) For $pn \neq 2$, define
\begin{equation} \label{Th6-BGHS1-eq3}
\Delta (W) = \frac{1}{Z_n (L)} \sum_{z\in \mathbb{Z}_L^{2n}, \ \omega (z) =0} W(z) - \mathcal{P} (W) .
\end{equation}
If $f_j \in X^{l+n+2, 4n+2} (\mathbb{R}^n)$, then
\begin{equation} \label{Th6-BGHS1-eq4}
\begin{split}
& \| \Delta(W)) \|_{X^l} \lesssim L^{-1+} \prod_{j=1}^{2p+1} \|f_j\|_{X^{l+n+2,4n+2} (\mathbb{R}^n)} . \\
\end{split}
\end{equation}
\end{proposition}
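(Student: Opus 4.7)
The plan is to follow the Hardy--Littlewood circle method strategy used to prove Theorem~6 in \cite{BGHS1} and to reduce the vector-valued statement to its scalar counterpart. First I would observe that the factors $K(j)/|K|^2$ and $K(m)$ depend only on the external frequency $K$ and the summation index $m$, so one may write $W_j(z) = \frac{K(j)}{|K|^2} \sum_{m=1}^n K(m) G_m(z)$, where each $G_m$ is a scalar product of $2p+1$ Fourier modes of the $f_i$ evaluated at shifted arguments. By linearity it then suffices to bound the scalar discrepancy $\frac{1}{Z_{pn}(L)} \sum_{\omega(z)=0} G_m(z) - \int \delta(\omega(z)) G_m(z)\, dz$ at each $K$ and reassemble with the outer weight $|K(j)K(m)|/|K|^2 \leq 1$, which is harmless in the $X^l$ norm (modulo the zero mode, which is excluded by the definition of $\Delta^{-1}\nabla\nabla\cdot$).

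Next I would implement the change of variables $(K_1,\ldots,K_{2p+1}) \mapsto (K, \mathbf{J}_e, \mathbf{J}_o)$ from \eqref{Th6-BGHS1-eq0}. By construction this parametrization encodes $S_{2p+1}(K)=0$ automatically, and the energy constraint $\Omega_{2p+1}(K)=0$ collapses to the bilinear condition $\omega(z) = \mathbf{J}_e \cdot \mathbf{J}_o = 0$, a non-degenerate split quadratic form on $\mathbb{Z}_L^{2np}$. I would then represent the indicator of $\{\omega(z)=0\}$ by orthogonality of characters, recasting the constrained sum as an integral of an exponential sum against a dual parameter $\alpha$, and Farey-dissect the $\alpha$-integral into major arcs centered at rationals of small denominator together with complementary minor arcs.

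On the major arcs I would apply Poisson summation in $z$ together with the standard Gauss-sum evaluation of the quadratic character. The integral factor converts the Riemann sum for $G_m$ into its continuous singular integral, which after recombination with the outer weights recovers $\mathcal{P}(W)$, while the arithmetic factor assembles into a singular series $\sum_q q^{-pn} \mathfrak{c}_q$ with $\mathfrak{c}_q$ a Ramanujan-like sum. For $pn \geq 3$ this series converges absolutely to $\zeta(pn-1)/\zeta(pn)$, producing the normalization $Z_{pn}(L)$ and an error of size $L^{-1+}$ once the tail is estimated. For $pn=2$ the series diverges logarithmically, so truncating at $q \lesssim L$ accounts for the $\log L$ factor in $Z_2(L)$ and isolates the $L$-independent correction operator $\mathcal{C}(W)$, whose boundedness requires the stronger regularity $X^{l+6,11}$ recorded in the statement.

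The main obstacle I anticipate is the minor-arc estimate: one must show that $\sum_z G_m(z)\, e(\alpha\, \omega(z))$ enjoys Weyl-type cancellation uniformly for $\alpha$ away from rationals with small denominator. Since $\omega$ is diagonal in the $(\mathbf{J}_e, \mathbf{J}_o)$ coordinates, completing the square and van der Corput bounds reduce the task to estimates for Gauss-type sums; however, the decay $\langle K_i \rangle^{-l-n-2}$ of the $f_j$ must be carefully converted into summable tails, and the Weyl savings have to be amplified in all $2p$ auxiliary directions, which dictates the regularity threshold $X^{l+n+2, 4n+2}$ (and the sharper $X^{l+6,11}$ in the borderline $pn=2$ case). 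Once these bounds yield minor-arc contributions of order $L^{-1+}$ (respectively $L^{-1/3+}$ for $pn=2$), the proposition assembles as claimed.
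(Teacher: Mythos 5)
This proposition is not proved in the paper at all: it is quoted verbatim (as ``Theorem 6 in \cite{BGHS1}'') in the preliminary section, and the only new feature relative to the scalar statement --- the vector weight $\frac{K(j)}{|K|^2}\sum_m K(m)$ and the sums over components $l$ --- is absorbed elsewhere in the paper by the observation $\frac{|K(j)K(m)|}{|K|^2}\le 1$. Your opening reduction (factoring $W_j$ through scalar blocks $G_m$ and reassembling with the bounded weight) is exactly the right way to see that the cited scalar result applies, so on that point you and the paper agree.

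Beyond that, your outline is a faithful reconstruction of the circle-method proof in \cite{BGHS1}: the change of variables \eqref{Th6-BGHS1-eq0} linearizing $S_{2p+1}=0$ and turning $\Omega_{2p+1}=0$ into the split quadratic form $\mathbf{J}_e\cdot\mathbf{J}_o=0$, the Farey dissection, the major-arc singular series giving $\zeta(pn-1)/\zeta(pn)$ (divergent and hence truncated at $q\lesssim L$ when $pn=2$, which is where $\log L$ and the correction $\mathcal{C}(W)$ come from), and the singular integral recovering $\mathcal{P}(W)$. The genuine gap is that the decisive technical content --- the minor-arc Weyl-sum estimate with weights of limited decay $\la K_i\ra^{-l}$, the conversion of derivative bounds on the $f_j$ into summable error terms after Poisson summation, and the derivation of the specific exponents $L^{-1+}$ and $L^{-1/3+}$ together with the regularity thresholds $X^{l+n+2,4n+2}$ and $X^{l+6,11}$ --- is flagged as an ``anticipated obstacle'' and asserted rather than carried out. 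These are precisely the parts of \cite{BGHS1} that occupy most of the work there, so your proposal is a correct strategy and an accurate roadmap, but not a proof; for the purposes of this paper the honest route is the one the author takes, namely to cite the result and verify only that the vector weights are harmless.
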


\section{Normal Form Transformation}
\label{S:Normal-form-transformation}

In order to prove Theorem \ref{th1} and \ref{th2}, we first perform a normal form transformation on \eqref{VNLS}. We denote
\begin{equation*}
\begin{split}
& S_{2d+1} (K) = K_1 - K_2 + K_3 - \cdots +K_{2d+1} -K , \\
&  \Omega_{2d+1} (K) = |K_1|^2 - |K_2|^2 + |K_3|^2 - \cdots +|K_{2d+1}|^2 - |K|^2 . \\
\end{split} 
\end{equation*}
Let
\begin{equation} \label{D:v}
v_j = u_j+  \sum_{d=1}^P \epsilon^{2d} H_{2d+1}^j (u) , \ j=1, 2, \cdots , n , 
\end{equation}
where $H_{2d+1}^j$ ($j = 1, 2, \cdots , n$) is a $2d+1$ multi-linear form in $u_l$ (in odd entries) and $\overline{u}_l$ (in even entries), $l= 1, 2, \cdots, n$. 

Recall that $u$ solves \eqref{VNLS}. We have, $v_j$ satisfies (notice that $ \partial_t  \sum_{d=1}^P \epsilon^{2d} H_{2d+1}^j (u) =  \sum_{k=1}^n \sum_{d=1}^P \epsilon^{2d} \frac{ \delta H_{2d+1}^j}{\delta u} (\partial_t u_k)$)
\begin{equation*}
\begin{split}
& -i\partial_t v_j + \frac{1}{2 \pi} \Delta v_j = \epsilon^2 \big[ \Delta^{-1} \nabla \nabla \cdot (|u|^2 u)\big]_j + \sum_{d=1}^P \frac{\epsilon^{2d}}{2 \pi} \Delta H_{2d+1}^j (u) \\
& \qquad \qquad \qquad + \sum_{k=1}^n \sum_{d=1}^P \epsilon^{2d} \frac{\delta H_{2d+1}^j}{\delta u_k} (\frac{-1}{2\pi} \Delta u_k) + \sum_{k=1}^n  \sum_{d=1}^P \epsilon^{2d+2} \frac{\delta H_{2d+1}^j}{\delta u_k} \big[ \Delta^{-1} \nabla \nabla \cdot (|u|^2 u)\big]_k , \\
\end{split}
\end{equation*}
where we used the notation for any $F$ depending on $u_j$ and $\overline{u}_j$:
\begin{equation*}
\frac{\delta F}{\delta u_j} (w) := \frac{\partial F}{\partial u_j} w  + \frac{\partial F}{\partial \overline{u}_j} \overline{w} . 
\end{equation*}
Writing $\mathcal{L} H_{2d+1}^j (u) = \frac{1}{2\pi} \Delta H_{2d+1}^j (u) - \sum_{k=1}^n \frac{\delta H_{2d+1}^j}{ \delta u_k} (\frac{1}{2\pi}\Delta u_k)$, and collecting terms of the same order in $\epsilon$, we conclude that   
\begin{equation*}
\begin{split}
& \quad -i\partial_t v_j + \frac{1}{2 \pi} \Delta v_j \\
& = \epsilon^2 \big[ \Delta^{-1} \nabla \nabla \cdot (|u|^2 u)\big]_j  + \epsilon^2 \mathcal{L} H_3^j (u)  + \sum_{d=2}^P \epsilon^{2d} \big( \mathcal{L} H_{2d+1}^j (u)  + \sum_{k=1}^n \frac{\delta H_{2d-1}^j}{\delta u_k} \big[ \Delta^{-1} \nabla \nabla \cdot (|u|^2 u)\big]_k  \big) \\
& \quad + \epsilon^{2(P+1)} \sum_{k=1}^n \frac{\delta H_{2P+1}^j}{\delta u_k} \big[ \Delta^{-1} \nabla \nabla \cdot (|u|^2 u)\big]_k . \\
\end{split}
\end{equation*}
We expand $u$ and $v$ in their Fourier coefficients
\begin{equation*}
u_j(t, x) = \frac{1}{L^n} \sum_{K \in \mathbb{Z}_L^n} u_{j,K} (t) e(K \cdot x) ,  \ v_j(t, x) = \frac{1}{L^n} \sum_{K \in \mathbb{Z}_L^n} v_{j,K} (t) e(K \cdot x) ,
\end{equation*}
where $K \in \mathbb{Z}_L^n = (\frac{\mathbb{Z}}{L})^n$ and $e(\alpha) = \exp (2 \pi i \alpha)$. Denote 
\begin{equation*}
a_{j,K} (t) :=  u_{j,K} (t) e(-|K|^2 t) ,  \  b_{j,K} (t) :=  v_{j,K} (t) e(-|K|^2 t) .
\end{equation*}
Moreover, we write the multi-linear forms $H^j_{2d+1}$ ($j = 1, 2, \cdots , n$) as 
\begin{equation*}
H_{2d+1}^j (u) = \frac{1}{L^n} \sum_{K \in \mathbb{Z}_L^n} H_{2d+1, K}^j (u)  e(K \cdot x) , 
\end{equation*}
where
\begin{equation*}
\begin{split}
H_{2d+1, K}^j (u) = 
& \frac{1}{L^{2nd}} \sum_{S_{2d+1} (K) =0} \sum_{\substack{J= (j_1, j_2, \cdots , j_{2d+1}) \\ \in \{1, 2, \cdots , n \}^{2d+1}}}  h_{J, 2d+1}^j (K, K_1, K_2, \cdots , K_{2d+1}) \\
& \cdot u_{j_1, K_1} \overline{u}_{j_2, K_2} \cdots u_{j_{2d+1},K_{2d+1}} . \\
\end{split} 
\end{equation*}
Here $J= (j_1, j_2, \cdots , j_{2d+1}) \in \{1, 2, \cdots , n \}^{2d+1}$, $u_{l, K_j} = u_{l, K_j} (t  )$.

The equation for $v_{j,K}$ can be written as 
\begin{equation} \label{vjK-eqn}
\begin{split}
& -i\partial_t v_{j, K} - 2 \pi |K|^2 v_{j, K} = \frac{\epsilon^2}{L^{2n}}  \frac{K(j)}{|K|^2} \sum_{m=1}^n K(m) \sum_{S_3 (K)=0 } (\sum_{l=1}^n u_{l, K_1} \overline{u}_{l, K_2} )  u_{m, K_3}+ \epsilon^2 \mathcal{L} H_{3, K}^j(u) \\
& \qquad \qquad + \sum_{d=2}^P \epsilon^{2d} \big(\mathcal{L} H_{2d+1, K}^j(u) + \sum_{k=1}^n \frac{\delta H_{2d-1, K}^j}{\delta u_k} \big[ \Delta^{-1} \nabla \nabla \cdot (|u|^2 u)\big]_k  \big) \\
& \qquad \qquad + \epsilon^{2(P+1)} \sum_{k=1}^n \frac{\delta H_{2P+1, K}^j}{\delta u_k} \big[ \Delta^{-1} \nabla \nabla \cdot (|u|^2 u)\big]_k , \\
\end{split}
\end{equation}
where $u_{l, K_j} = u_{l, K_j} (t  )$, and $K (l)$ is the $l$-th component in $K$, $K_j (l)$ is the $l$-th component in $K_j$ . Moreover, $\mathcal{L} H_{2d+1, K}^j(u)$ and $\frac{\delta H_{2d-1, K}^j}{\delta u_k} \big[ \Delta^{-1} \nabla \nabla \cdot (|u|^2 u)\big]_k$ are given by
\begin{equation*}
\begin{split}
\mathcal{L} H_{2d+1, K}^j(u) =
& \frac{1}{L^{2nd}} \sum_{S_{2d+1} (K) =0} \sum_{\substack{J= (j_1, j_2, \cdots , j_{2d+1}) \\ \in \{1, 2, \cdots , n \}^{2d+1}}}  2 \pi \Omega_{2d+1} (K, K_1, \cdots , K_{2d+1}) \\
& \cdot h_{J, 2d+1}^j (K, K_1, K_2, \cdots , K_{2d+1})  u_{j_1, K_1} \overline{u}_{j_2, K_2} \cdots u_{j_{2d+1},K_{2d+1}} , \\
\end{split}
\end{equation*}
\begin{equation*}
\begin{split}
& \quad \sum_{k=1}^n \frac{\delta H_{2d-1, K}^j}{\delta u_k} \big[ \Delta^{-1} \nabla \nabla \cdot (|u|^2 u)\big]_k (K) \\
& =  \frac{1}{L^{2nd}} \sum_{S_{2d-1} (K) =0}  \sum_{\substack{J= (j_1, j_2, \cdots , j_{2d-1}) \\ \in \{1, 2, \cdots , n \}^{2d-1}}}  h_{J, 2d-1}^j (K, K_1, \cdots , K_{2d-1}) \\
& \quad \cdot \big( \frac{K_1(j_1)}{|K_1|^2} \sum_{m=1}^n K_1(m) \sum_{N-K_{2d} +K_{2d+1} =K_1 } (\sum_{l=1}^n u_{l, N} \overline{u}_{l, K_{2d}} )  u_{m, K_{2d+1}} \big)  \\
& \quad \cdot   \overline{u}_{j_2, K_2} u_{j_3, K_3} \cdots u_{j_{2d-1},K_{2d-1}}  \\
& \quad + \frac{1}{L^{2nd}} \sum_{S_{2d-1} (K) =0}  \sum_{\substack{J= (j_1, j_2, \cdots , j_{2d-1}) \\ \in \{1, 2, \cdots , n \}^{2d-1}}}  h_{J, 2d-1}^j (K, K_1, \cdots , K_{2d-1}) \\
& \quad \cdot \big( \frac{K_2(j_2)}{|K_2|^2} \sum_{m=1}^n K_2(m) \sum_{N-K_{2d} +K_{2d+1} =K_2 } (\sum_{l=1}^n \overline{u}_{l, N} u_{l, K_{2d}} )  \overline{u}_{m, K_{2d+1}} \big)  \\
& \quad \cdot  u_{j_1, K_1} u_{j_3, K_3} \cdots u_{j_{2d-1},K_{2d-1}}  \\
& \quad + \cdots \\
\end{split}
\end{equation*}
In particular,
\begin{equation*}
\begin{split}
\mathcal{L} H_{3, K}^j(u) =
& \frac{1}{L^{2n}} \sum_{S_{3} (K) =0} \sum_{\substack{J= (j_1, j_2, j_3) \\ \in \{1, 2, \cdots , n \}^3}}  2 \pi \Omega_{3} (K, K_1, K_2 , K_3) \\
& \cdot h_{J, 3}^j (K, K_1, K_2, K_3)  u_{j_1, K_1} \overline{u}_{j_2, K_2} u_{j_{3},K_{3}} . \\
\end{split}
\end{equation*}
We iteratively define $H_{2d+1}^j$ to eliminate the non-resonant terms (the normal form transformation) as follows:
\begin{equation}
\begin{split}
H_{3, K}^j (u) =
& \frac{-1}{L^{2n}} \sum_{\substack{S_3(K)=0, \\  \Omega_3(K) \neq 0}} \frac{1}{ 2 \pi \Omega_{3} (K, K_1, K_2 , K_3)}  \frac{K(j)}{|K|^2} \sum_{m=1}^n K(m)   (\sum_{l=1}^n u_{l, K_1} \overline{u}_{l, K_2} ) u_{m, K_3} , \\
\end{split}
\end{equation}
\begin{equation}  \label{H_2d+1-eqn}
\begin{split}
H_{2d+1, K}^j (u) 
& =  \frac{-1}{L^{2nd}} \sum_{S_{2d-1} (K) =0} \sum_{\substack{J= (j_1, j_2, \cdots , j_{2d-1}) \\ \in \{1, 2, \cdots , n \}^{2d-1}}}  h_{J, 2d-1}^j (K, K_1, \cdots , K_{2d-1}) \\
& \quad \cdot \big( \sum_{\substack{N-K_{2d} +K_{2d-1} =K_1, \\ \Omega_{2d+1}(K, N, K_2, \cdots , K_{2d+1}) \neq 0}} \frac{1}{2\pi \Omega_{2d+1}(K, N, K_2, \cdots , K_{2d+1})} \\
& \quad \cdot   \frac{K_1(j_1)}{|K_1|^2} \sum_{m=1}^n K_1(m)  (\sum_{l=1}^n u_{l, N} \overline{u}_{l, K_{2d}} )  u_{m, K_{2d+1}} \big)  \\
& \quad \cdot \overline{u}_{j_2, K_2} u_{j_3, K_3} \overline{u}_{j_4, K_4} u_{j,K_5} \overline{u}_{j,K_6} \cdots u_{j,K_{2d-1}}  \\
& \quad + \frac{-1}{L^{2nd}} \sum_{S_{2d-1} (K) =0}  \sum_{\substack{J= (j_1, j_2, \cdots , j_{2d-1}) \\ \in \{1, 2, \cdots , n \}^{2d-1}}}  h_{J, 2d-1}^j (K, K_1, \cdots , K_{2d-1}) \\
& \quad \cdot \big( \sum_{\substack{N-K_{2d} +K_{2d-1} =K_2, \\ \Omega_{2d+1}(K, K_1,   N, \cdots , K_{2d+1}) \neq 0}} \frac{1}{2\pi \Omega_{2d+1}(K, K_1,  N, \cdots , K_{2d+1})} \\
& \quad \cdot \big( \frac{K_2(j_2)}{|K_2|^2} \sum_{m=1}^n K_2(m)  (\sum_{l=1}^n \overline{u}_{l, N} u_{l, K_{2d}} )  \overline{u}_{m, K_{2d+1}} \big)  \\
& \quad \cdot  u_{j_1, K_1} u_{j_3, K_3} \cdots u_{j_{2d-1},K_{2d-1}}  \\
& \quad + \cdots  \\
\end{split}
\end{equation}
With this choice of $H_{2d+1}^j$, the equation \eqref{vjK-eqn} becomes 
\begin{equation} \label{vjK-eqn-res}
\begin{split}
& -i\partial_t v_{j, K} - 2 \pi |K|^2 v_{j, K} = \frac{\epsilon^2}{L^{2n}}   \frac{K(j)}{|K|^2} \sum_{m=1}^n K(m) \sum_{S_3 (K)=0 , \Omega_3 (K) = 0 } (\sum_{l=1}^n u_{l, K_1} \overline{u}_{l, K_2} )  u_{m, K_3} \\
& \qquad \qquad + \sum_{k=1}^n \sum_{d=2}^P \epsilon^{2d}  \sum_{S_{2d+1} (K)=0, \Omega_{2d+1} (K) = 0 }  \frac{\delta H_{2d-1, K}^j}{\delta u_k} \big[ \Delta^{-1} \nabla \nabla \cdot (|u|^2 u)\big]_k  \\
& \qquad \qquad + \epsilon^{2(P+1)}  \sum_{k=1}^n \frac{\delta H_{2P+1, K}^j}{\delta u_k} \big[ \Delta^{-1} \nabla \nabla \cdot (|u|^2 u)\big]_k . \\
\end{split}
\end{equation}
Here $\sum_{S_{2d+1} (K)=0, \Omega_{2d+1} (K) = 0 }$ means that we only take the parts that satisfies the resonance condition.

Now, using $\frac{|K(j)K(m)|}{|K|^2} \leq 1$ together with Proposition \ref{Th4-BGHS1}, we have the following (here all the constants involved in $\lesssim$, $\lesssim_n$, $\cdots$ depend on $P$)

\begin{lemma} \label{L:lm13-BGHS1}
For any $d \leq P$, 
\begin{equation}
\|H_{2d+1}^j (u)\|_{X^l} \lesssim_n L^+ \|u\|^{2d+1}_{X^l_n} .
\end{equation}
As a result, for $v$ defined in \eqref{D:v}, there holds
\begin{equation}
\|v_j-u_j\|_{X^l} \lesssim_n L^+ \sum_{d=1}^P \epsilon^{2d} \|u\|^{2d+1}_{X^l_n} , \ j = 1, 2, \cdots , n .
\end{equation}
(Here $L^+$ means the power $L^{+ \epsilon}$ for some $\epsilon >0$.)
\end{lemma}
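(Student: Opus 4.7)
The plan is to prove the first inequality by induction on $d$, from which the second inequality follows immediately from the definition \eqref{D:v} and the triangle inequality. After using the elementary bound $|K(j) K(m)|/|K|^2 \leq 1$ to dispose of the curl-free projector, each $H_{2d+1, K}^j(u)$ reduces to a $(2d+1)$-linear resonant sum in $u$ weighted by small-divisor factors $1/\Omega_{2d+1}$, which is exactly the type of object controlled by Propositions \ref{Th4-BGHS1} and \ref{Th6-BGHS1}.

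For the base case $d=1$, organize the sum defining $H_{3,K}^j$ by the value $\mu := \Omega_3(K) \in \frac{1}{L^2}\mathbb{Z} \setminus \{0\}$. For each fixed $\mu$, Proposition \ref{Th4-BGHS1} applied with $a_K = b_K = c_K = \|u\|_{X^l_n}^{-1} |u_{l,K}|$ gives
\[
\sum_{R_\mu(K)} |u_{l,K_1}\, \overline{u}_{l,K_2}\, u_{m,K_3}| \lesssim \|u\|^3_{X^l_n}\, \langle K\rangle^{-l}\, Z_n(L)
\]
uniformly in $\mu$. Split the $\mu$-sum into (i) $0 < |\mu| \le 1$, where there are $O(L^2)$ admissible values and $\sum_\mu 1/|\mu| \lesssim L^2\log L$; and (ii) $|\mu|>1$, controlled by trading polynomial decay of $u$ for decay in $\mu$ via $|\Omega_3|\lesssim \max(|K|,|K_i|)^2$, producing a convergent tail. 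Combining with the prefactor $1/L^{2n}$ and the size $Z_n(L)\lesssim L^{2n-2}\log L$ yields
\[
\langle K\rangle^l |H_{3,K}^j(u)| \lesssim \frac{Z_n(L)\cdot L^2\log L}{L^{2n}} \|u\|^3_{X^l_n} \lesssim L^{0+}\, \|u\|^3_{X^l_n}.
\]

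For the inductive step, use \eqref{H_2d+1-eqn} to express the new coefficient $h_{J,2d+1}^j$ as an inner resonant sum involving $h_{J',2d-1}^j$ weighted by $1/\Omega_{2d+1}$, then view $H_{2d+1}^j(u)$ as a full $(2d+1)$-linear form and appeal to Proposition \ref{Th6-BGHS1}, the multi-linear analogue of Proposition \ref{Th4-BGHS1}. Executing the same $\mu$-splitting as in the base case contributes one additional $L^{0+}$ factor at each level; since $P$ is a fixed integer the total accumulated loss stays $L^{0+} = L^+$.

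The principal obstacle is the small-divisor factor $1/\Omega_{2d+1}$: the naive bound $|\Omega| \gtrsim L^{-2}$ costs an $L^2$ amplification, which is precisely compensated by the counting-theoretic gain $Z_n(L)/L^{2n} \lesssim L^{-2}\log L$ from Propositions \ref{Th4-BGHS1} and \ref{Th6-BGHS1}. Making this cancellation rigorous also requires truncating the tail of the $\mu$-sum, for which one exploits the higher regularity of $u$ available in Theorems \ref{th1} and \ref{th2} (where the data lies in $X^{l+n+2,3n+3}_n$ rather than merely $X^l_n$); this extra decay absorbs the regime $|\mu|\gg 1$ without any further loss.
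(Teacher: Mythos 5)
Your proposal follows essentially the same route as the paper: reduce to a pointwise bound on the weighted coefficient sum, use $|K(j)K(m)|/|K|^2\le 1$ to remove the projector, handle $d=1$ by decomposing over the level sets $\Omega_3=\mu\in\frac1{L^2}\mathbb{Z}\setminus\{0\}$ and applying Proposition~\ref{Th4-BGHS1} uniformly in $\mu$ (this is exactly the content of Lemma~13 of \cite{BGHS1}, to which the paper defers), and then induct on $d$ via the recursive definition \eqref{H_2d+1-eqn}. Your base case is in fact more detailed than the paper's. One caveat on the inductive step: Proposition~\ref{Th6-BGHS1} is not the right tool there --- it is an approximation statement (normalized lattice sum versus integral) on the resonant set $\Omega_{2p+1}=0$, not a counting bound on the level sets $\Omega_{2p+1}=\mu$ with the small divisor $1/\mu$, so it does not directly control $H_{2d+1}^j$. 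The paper instead bounds the \emph{inner} trilinear block (the sum over $N,K_{2d},K_{2d+1}$ weighted by $1/\Omega_{2d+1}$) exactly as in the $d=1$ case, obtaining $L^+\la K_1\ra^{-l}$, and then applies the inductive hypothesis to the remaining $(2d-1)$-linear factor; your ``$\mu$-splitting at each level'' is in effect this argument, so the invocation of Proposition~\ref{Th6-BGHS1} should simply be dropped. A second, smaller point: your closing remark that the tail $|\mu|>1$ is absorbed using the $X^{l+n+2,3n+3}$ regularity of the data sits awkwardly with the statement of the lemma, whose right-hand side involves only $\|u\|_{X^l_n}$; the decay used to sum the tail (and to meet the hypothesis $l>3n+2$ of Proposition~\ref{Th4-BGHS1}) has to be budgeted out of the exponent $l$ appearing in the estimate itself, not borrowed from a stronger norm that does not appear in the conclusion.
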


\begin{proof}

It suffices to show that for each $J= (j_1, j_2, \cdots , j_{2d+1}) \in \{1, 2, \cdots , n \}^{2d+1}$,
\begin{equation}
\frac{1}{L^{2nd}} \sum_{S_{2d+1} (K)=0} |h_{J, 2d+1}^j (K, K_1, K_2, \cdots , K_{2d+1} )| \la K_1\ra ^{-l} \la K_2\ra ^{-l} \cdots \la K_{2d+1}\ra ^{-l} \lesssim_n L^+ \la K \ra ^{-l} .
\end{equation}
We start from the case $d=1$, where $|h_{J, 3}^j | \lesssim _n \frac{1}{  |\Omega_{3} (K, K_1, K_2 , K_3)|}  | \frac{K(j)}{|K|^2} \sum_{m=1}^n K(m) | \lesssim_n \frac{1}{|\Omega_3|}$. The proof of the case $d=1$ can then be done in essentially the same way as in Lemma 13 in \cite{BGHS1} by using $\frac{|K(j)K(m)|}{|K|^2} \leq 1$.

Next we turn to the case $d \geq 2$ and prove the bound inductively. We use the recursive definition \eqref{H_2d+1-eqn} to write
\begin{equation}
\begin{split}
& \quad  \frac{1}{L^{2nd}} \sum_{S_{2d+1} (K)=0} |h_{J, 2d+1}^j (K, K_1, K_2, \cdots , K_{2d+1} )| \la K_1\ra ^{-l} \la K_2\ra ^{-l} \cdots \la K_{2d+1}\ra ^{-l} \\
& \lesssim_n  \frac{1}{L^{2nd}} \sum_{S_{2d+1} (K)=0} |h_{J, 2d-1}^j | \Big| \sum_{\substack{N-K_{2d} +K_{2d-1} =K_1, \\ \Omega_{2d+1}(K_1, N, K_2, \cdots , K_{2d+1}) \neq 0}} \frac{1}{2\pi \Omega_{2d+1}(K_1, N, K_2, \cdots , K_{2d+1})} \\
& \quad \cdot   \frac{K_1(j_1)}{|K_1|^2} \sum_{m=1}^n K_1(m)  (\sum_{l=1}^n \la N \ra^{-l} \la K_{2d} \ra^{-l} )  \la K_{2d+1} \ra^{-l}   \Big|   \la K_2 \ra ^{-l} \cdots \la K_{2d-1} \ra ^{-l}  \\
& \lesssim_n  \frac{1}{L^{2nd}} \sum_{S_{2d+1} (K)=0} |h_{J, 2d-1}^j | \Big( \sum_{\substack{N-K_{2d} +K_{2d-1} =K_1, \\ \Omega_{2d+1}(K_1, N, K_2, \cdots , K_{2d+1}) \neq 0}} \frac{1}{|\Omega_{2d+1}|} \la N \ra ^{-l}    \la K_{2d} \ra ^{-l}    \la K_{2d+1} \ra ^{-l}   \Big)   \\
& \quad \cdot \la K_2 \ra ^{-l} \cdots \la K_{2d-1} \ra ^{-l} . \\
\end{split}
\end{equation}
Here we have used the fact that $\frac{|K(j)K(m)|}{|K|^2} \leq 1$. The inner sum can be bounded as in the $d=1$ case above, which leads to the estimate
\begin{equation}
\begin{split}
& \quad  \frac{1}{L^{2nd}} \sum_{S_{2d+1} (K)=0} |h_{J, 2d+1}^j (K, K_1, K_2, \cdots , K_{2d+1} )| \la K_1\ra ^{-l} \la K_2\ra ^{-l} \cdots \la K_{2d+1}\ra ^{-l} \\
& \lesssim_n  \frac{L^+}{L^{n(2d-2)}} \sum_{S_{2d-1} (K)=0} |h_{J, 2d-1}^j |  \la K_2 \ra ^{-l} \cdots \la K_{2d+1} \ra ^{-l}  \la K_1 \ra ^{-l} \cdots \la K_{2d-1} \ra ^{-l}  \\
& \lesssim_n   L^+   \la K \ra ^{-l}  , \\
\end{split}
\end{equation}
where in the last inequality we used the bound at the rank $d - 1$, which follows from the inductive assumption.
\end{proof}

\section{Proof of the Main Theorem}
\label{S:Proof-main-th}

We now give the proof of Theorem \ref{th1} and Theorem \ref{th2}. All the constants involved in $\lesssim$, $\lesssim_n$, $\cdots$ here depend on $P$.

\begin{proof} (Proof of Theorem \ref{th1} and Theorem \ref{th2})

Let us consider Theorem \ref{th1}. Recall 
\begin{equation*}
a_{j,K} (t) :=  u_{j,K} (t) e(-|K|^2 t) ,  \  b_{j,K} (t) :=  v_{j,K} (t) e(-|K|^2 t) .
\end{equation*}
From Lemma \ref{L:lm13-BGHS1}, we have
\begin{equation*}
\begin{split}
\| a_{j, K} (t) - g_j(\frac{t}{T_R}, K) \|_{X^l}
& \leq \| b_{j, K} (t) - g_j(\frac{t}{T_R}, K) \|_{X^l} + \| a_{j, K} (t) - b_{j, K} (t) \|_{X^l} \\
& \leq \| b_{j, K} (t) - g_j(\frac{t}{T_R}, K) \|_{X^l} +  C_{\gamma, n} L^\gamma \sum_{d=1}^P \epsilon^{2d} \|u\|^{2d+1}_{X^l_n}  \\
\end{split}
\end{equation*}
for any $\gamma>0$, 
where $g = (g_j)_{j=1}^n$ satisfies
\begin{equation*}
\begin{split}
& -i \partial_t g = \mathcal{T} (g, g, g), \\
& \sup_{0 \leq t \leq M} \|g (t) \|_{X^{l+n+2, N}_n (\mathbb{R}^n)} \leq B , \\
\end{split}
\end{equation*}
with $\mathcal{T} (g, g, g) (K) = (\mathcal{T}_j (g, g, g) (K))_{j=1}^n$, (so $- i\partial_t g_j = \mathcal{T}_j (g, g, g)$) 
\begin{equation*}
\mathcal{T}_j (g, g, g) (K) = \frac{K(j)}{|K|^2} \sum_{m=1}^n K(m) \int_{\mathbb{R}^{3n}} \big(\sum_{l=1}^n g_l (K_1) \overline{g}_l (K_2) \big) g_m (K_3) \delta_{\mathbb{R}^n} (S_3 (K)) \delta_{\mathbb{R}} (\Omega_3 (K)) dK_2 dK_2 dK_3   
\end{equation*}
and 
\begin{equation*}
T_R := \frac{L^{2n}}{\epsilon^2 Z_n (L)} .
\end{equation*}
Initially $a_{j, K} (0) = g_j (0, K)$. We will show by a bootstrapping argument that $\|a_{j, K} (t) \|_{X^l} \leq 2B$ on the time interval $[0, MT_R]$. 

Let us start by assuming that  
\begin{equation} \label{E:pf-th1-eq1}
\begin{split}
\| a_{j, K} (t) - g_j(\frac{t}{T_R}, K) \|_{X^l}
& \leq \| b_{j, K} (t) - g_j(\frac{t}{T_R}, K) \|_{X^l} +  C_{\gamma, n} L^\gamma \sum_{d=1}^P \epsilon^{2d} B^{2d+1} . \
\end{split}
\end{equation}
Denote
\begin{equation*}
w_{j, k}= b_{j, K} (t) - g_j (\frac{t}{T_R}, K) .
\end{equation*}
In order to bound $w_{j, K}$, we write the equation \eqref{vjK-eqn-res} into the form
\begin{equation}
\begin{split}
& -i\partial_t b_{j, K}  = \frac{\epsilon^2}{L^{2n}}   \frac{K(j)}{|K|^2} \sum_{m=1}^n K(m) \sum_{S_3 (K)=0 , \Omega_3 (K) = 0 } (\sum_{l=1}^n a_{l, K_1} \overline{a}_{l, K_2} )  a_{m, K_3} \\
& \qquad \qquad + \sum_{k=1}^n \sum_{d=2}^P \epsilon^{2d}  \sum_{S_{2d+1} (K)=0 }  \frac{\delta \tilde{H}_{2d-1, K}^j}{\delta u_k} \big[ \Delta^{-1} \nabla \nabla \cdot (|a|^2 a)\big]_k  \\
& \qquad \qquad + \epsilon^{2(P+1)} e(-|K|^2 t)  \sum_{k=1}^n \frac{\delta H_{2P+1, K}^j}{\delta u_k} \big[ \Delta^{-1} \nabla \nabla \cdot (|u|^2 u)\big]_k , \\
\end{split}
\end{equation}
where
\begin{equation*}
\begin{split}
& \quad \sum_{k=1}^n \frac{\delta \tilde{H}_{2d-1, K}^j}{\delta u_k} \big[ \Delta^{-1} \nabla \nabla \cdot (|a|^2 a)\big]_k (K) \\
& =  \frac{1}{L^{2nd}} \sum_{S_{2d-1} (K) =0}  \sum_{\substack{J= (j_1, j_2, \cdots , j_{2d-1}) \\ \in \{1, 2, \cdots , n \}^{2d-1}}}  h_{J, 2d-1}^j (K, K_1, \cdots , K_{2d-1}) \\
& \quad \cdot \big( \frac{K_1(j_1)}{|K_1|^2} \sum_{m=1}^n K_1(m) \sum_{ \substack{ N-K_{2d} +K_{2d+1} =K_1 , \\  \Omega_{2d+1} (K) = 0 }} (\sum_{l=1}^n a_{l, N} \overline{a}_{l, K_{2d}} )  a_{m, K_{2d+1}} \big)  \\
& \quad \cdot   \overline{a}_{j_2, K_2} a_{j_3, K_3} \cdots a_{j_{2d-1},K_{2d-1}}  \\
& \quad + \frac{1}{L^{2nd}} \sum_{S_{2d-1} (K) =0}  \sum_{\substack{J= (j_1, j_2, \cdots , j_{2d-1}) \\ \in \{1, 2, \cdots , n \}^{2d-1}}}  h_{J, 2d-1}^j (K, K_1, \cdots , K_{2d-1}) \\
& \quad \cdot \big( \frac{K_2(j_2)}{|K_2|^2} \sum_{m=1}^n K_2(m) \sum_{ \substack{ N-K_{2d} +K_{2d+1} =K_2 , \\  \Omega_{2d+1} (K) = 0 }} (\sum_{l=1}^n \overline{a}_{l, N} a_{l, K_{2d}} )  \overline{a}_{m, K_{2d+1}} \big)  \\
& \quad \cdot  a_{j_1, K_1} a_{j_3, K_3} \cdots a_{j_{2d-1},K_{2d-1}}  \\
& \quad + \cdots \\
\end{split}
\end{equation*}
Here we make use of the constraints $\Omega_3 (K) = \Omega_{2d+1} (K) =0$ ($d=2, 3, \cdots , P$). Hence
\begin{equation} \label{E:pf-th1-eq2}
\begin{split}
-i\partial_t w_{j, K} 
& = \frac{\epsilon^2}{L^{2n}}   \Big\{ \frac{K(j)}{|K|^2} \sum_{m=1}^n K(m) \sum_{S_3 (K)=0 , \Omega_3 (K) = 0 } (\sum_{l=1}^n a_{l, K_1} \overline{a}_{l, K_2} )  a_{m, K_3} - Z_n (L) \mathcal{T}_j (g,g,g)(K) \Big\}\\
& \qquad \qquad + \sum_{k=1}^n \sum_{d=2}^P \epsilon^{2d}  \sum_{S_{2d+1} (K)=0, \Omega_{2d+1} (K) = 0 }  \frac{\delta \tilde{H}_{2d-1, K}^j}{\delta u_k} \big[ \Delta^{-1} \nabla \nabla \cdot (|a|^2 a)\big]_k  \\
& \qquad \qquad + \epsilon^{2(P+1)} e(-|K|^2 t)  \sum_{k=1}^n \frac{\delta H_{2P+1, K}^j}{\delta u_k} \big[ \Delta^{-1} \nabla \nabla \cdot (|u|^2 u)\big]_k  \\
& := I + II + III . \\
\end{split}
\end{equation}
We firstly consider the term $II=  \sum_{k=1}^n \frac{\delta \tilde{H}_{2d-1, K}^j}{\delta u_k} \big[ \Delta^{-1} \nabla \nabla \cdot (|a|^2 a)\big]_k (K) $. By Lemma \ref{Th4-BGHS1}, we have 
\begin{equation*}
\Big| \frac{1}{L^{2n}}  \big( \frac{K_1(j_1)}{|K_1|^2} \sum_{m=1}^n K_1(m) \sum_{N-K_{2d} +K_{2d+1} =K_1 } (\sum_{l=1}^n a_{l, N} \overline{a}_{l, K_{2d}} )  a_{m, K_{2d+1}} \big) \Big| \lesssim_n \frac{Z_n(L)}{L^{2n}} B^3 \la K_1 \ra^{-l} ,
\end{equation*}
\begin{equation*}
\Big| \frac{1}{L^{2n}}  \big( \frac{K_2(j_2)}{|K_2|^2} \sum_{m=1}^n K_2(m) \sum_{N-K_{2d} +K_{2d+1} =K_2 } (\sum_{l=1}^n \overline{a}_{l, N} a_{l, K_{2d}} )  \overline{a}_{m, K_{2d+1}} \big) \Big| \lesssim_n \frac{Z_n(L)}{L^{2n}} B^3 \la K_2 \ra^{-l} ,
\end{equation*}
\begin{equation*}
\cdots
\end{equation*}
for any $j_1$, $j_2$, $\cdots  \in \{1, 2, \cdots , n \}$ and $d$. Hence from Lemma \ref{L:lm13-BGHS1},  
\begin{equation*}
\Big\| \sum_{k=1}^n \frac{\delta \tilde{H}_{2d-1, K}^j}{\delta u_k} \big[ \Delta^{-1} \nabla \nabla \cdot (|a|^2 a)\big]_k (K) \Big\|_{X^l}  \lesssim_n \frac{Z_n(L)}{L^{2n}} B^{2d+1} L^\gamma ,
\end{equation*} 
which gives
\begin{equation}
\|II\|_{X^l} \lesssim_n C_\gamma \big( \sum_{d=2}^P \epsilon^{2d-4} B^{2d+1} \big) \frac{\epsilon^2 Z_n (L)}{L^{2n}} \epsilon^2 L^\gamma . 
\end{equation}
For $III$, by Lemma \ref{L:lm13-BGHS1}, we have
\begin{equation}
\|III\|_{X^l} \lesssim_n C_\gamma B^{2(P+1)} \epsilon^{2(P+1)} L^\gamma .  
\end{equation}
For $I$, we write
\begin{equation}
\begin{split}
& \frac{\epsilon^2}{L^{2n}}   \Big\{ \frac{K(j)}{|K|^2} \sum_{m=1}^n K(m) \sum_{S_3 (K)=0 , \Omega_3 (K) = 0 } (\sum_{l=1}^n a_{l, K_1} \overline{a}_{l, K_2} )  a_{m, K_3} - Z_n (L) \mathcal{T}_j (g,g,g)(K) \Big\} \\
& = \frac{\epsilon^2}{L^{2n}}   \Big\{   \frac{K(j)}{|K|^2} \sum_{m=1}^n K(m) \sum_{S_3 (K)=0 , \Omega_3 (K) = 0 } (\sum_{l=1}^n a_{l, K_1} \overline{a}_{l, K_2} )  a_{m, K_3} \\
& \quad - \frac{K(j)}{|K|^2} \sum_{m=1}^n K(m) \sum_{S_3 (K)=0 , \Omega_3 (K) = 0 } (\sum_{l=1}^n g_{l} (K_1) \overline{g}_{l} (K_2) )  g_{m} (K_3) \Big\} \\
& \quad + \frac{\epsilon^2}{L^{2n}}   \Big\{    \frac{K(j)}{|K|^2} \sum_{m=1}^n K(m) \sum_{S_3 (K)=0 , \Omega_3 (K) = 0 } (\sum_{l=1}^n g_{l} (K_1) \overline{g}_{l} (K_2) )  g_{m} (K_3) - Z_n (L) \mathcal{T}_j (g,g,g)(K) \Big\} \\
& := I_1 + I_2 . \\
\end{split}
\end{equation}
For $I_1$, by Proposition \ref{Th4-BGHS1},
\begin{equation}
\|I_1 \|_{X^l} \lesssim_n C_0 \frac{\epsilon^2 Z_n (L)}{L^{2n}} B^2 \sup_j \| a_{j, K} (t) - g_j (\frac{T}{T_R}, K)\|_{X^l} \leq C_0 \frac{\epsilon^2 Z_n (L)}{L^{2n}} B^2  \| a_{K} (t) - g (\frac{T}{T_R}, K)\|_{X^l} .  
\end{equation}
For the term $I_2$, by Proposition \ref{Th5-BGHS1} item 2),
\begin{equation} \label{E:pf-th1-eq3}
\begin{split}
& \quad \|I_2 \|_{X^l}  \\
& =   \frac{\epsilon^2}{L^{2n}}   \Big\{  \sum_{l=1}^n \frac{K(j)}{|K|^2} \sum_{m=1}^n K(m) \sum_{S_3 (K)=0 , \Omega_3 (K) = 0 }  g_{l} (K_1) \overline{g}_{l} (K_2)   g_{m} (K_3) - \\
& \quad   Z_n (L) \sum_{l=1}^n \frac{K(j)}{|K|^2} \sum_{m=1}^n K(m) \int_{\mathbb{R}^{3n}}   g_l (K_1) \overline{g}_l (K_2)  g_m (K_3) \delta_{\mathbb{R}^n} (S_3 (K)) \delta_{\mathbb{R}} (\Omega_3 (K)) dK_1 dK_2 dK_3    \Big\} \\
& \lesssim_n \frac{\epsilon^2}{L^{2n}}   \sum_{l=1}^n \frac{K(j)}{|K|^2} \sum_{m=1}^n K(m) \Big\{ \sum_{S_3 (K)=0 , \Omega_3 (K) = 0 }  g_{l} (K_1) \overline{g}_{l} (K_2)   g_{m} (K_3) - \\
& \quad   Z_n (L)  \int_{\mathbb{R}^{3n}}   g_l (K_1) \overline{g}_l (K_2)  g_m (K_3) \delta_{\mathbb{R}^n} (S_3 (K)) \delta_{\mathbb{R}} (\Omega_3 (K)) dK_1 dK_2 dK_3      \Big\} \\
& \lesssim_n C B^3 \frac{\epsilon^2 Z_n (L)}{L^{2n}} \delta (L)  .  \\
\end{split}
\end{equation}
Here 
\[   \delta (L) := \begin{cases} 
          ( \log L )^{-1} , \ \text{ when } n =2 , \\
           L^{-1+}  ,  \ \ \ \ \ \ \text{ when } n \geq 3 .
       \end{cases}
\]

Integrating \eqref{E:pf-th1-eq2} in time and combine the result together with \eqref{E:pf-th1-eq1}, we obtain
\begin{equation}
\begin{split}
\Big\| a_{j, K} (t) - g_j(\frac{t}{T_R}, K) \Big\|_{X^l}
& \leq \| b_{j, K} (t) - g_j(\frac{t}{T_R}, K) \|_{X^l} +  C_{\gamma, n} L^\gamma \sum_{d=1}^P \epsilon^{2d} B^{2d+1} \\
& \lesssim_n \int^t_0 \Big\{ C_0 \frac{\epsilon^2 Z_n (L)}{L^{2n}} B^2 \big\| a_{j, K} (s) - g_j(\frac{s}{T_R}, K) \big\|_{X^l} \\
& \quad + C_{\gamma, B} \frac{\epsilon^2 Z_n (L)}{L^{2n}} \delta (L)  + C_{\gamma, B} \frac{\epsilon^2 Z_n (L)}{L^{2n}} \epsilon^2 L^\gamma + C_{\gamma, B}  \epsilon^{2P+1} L^\gamma   \Big\}  ds \\
& \quad +  C_{\gamma, B} L^\gamma  \epsilon^2 . \\
\end{split}
\end{equation}
By Gronwall’s inequality and $0 \leq t \leq MT_R $, we obtain,
\begin{equation}
\begin{split}
\Big\| a_{j, K} (t) - g_j(\frac{t}{T_R}, K) \Big\|_{X^l}
& \leq  C_{\gamma, B, n} (\epsilon^2 L^\gamma + M \delta (L) + M \epsilon^2 L^\gamma + M \epsilon^{2P+1} \L^{2+\gamma} ) e^{C_0 B^2 M} .
\end{split}
\end{equation}
Thus, choosing $L$ large, $\epsilon^2 L^\gamma$ small , and $P$ large, we arrive at
\begin{equation}
\begin{split}
\sup_{0 \leq t \leq T_R \delta_0} \Big\| a_{j, K} (t) - g_j(\frac{t}{T_R}, K) \Big\|_{X^l}
& \leq  C_{\gamma, B, n} (\epsilon^2 L^\gamma +  \delta (L)  ) < \frac{B}{2} ,
\end{split}
\end{equation}
and hence 
\begin{equation}
\|a_K (t) \|_{X^l} \leq 2B .
\end{equation}
The bootstrap argument is then complete, which gives the desired Theorem \ref{th1}. 

Theorem \ref{th2} is proved in a similar way as the one for Theorem \ref{th1}. One only needs to replace the term 
$$I = \frac{\epsilon^2}{L^{2n}}   \Big\{ \frac{K(j)}{|K|^2} \sum_{m=1}^n K(m) \sum_{S_3 (K)=0 , \Omega_3 (K) = 0 } (\sum_{l=1}^n a_{l, K_1} \overline{a}_{l, K_2} )  a_{m, K_3} - Z_n (L) \mathcal{T}_j (g,g,g)(K) \Big\} $$
in \eqref{E:pf-th1-eq2} by 
\begin{equation*}
\begin{split}
\tilde{I} =
& \frac{\epsilon^2}{L^{2n}}   \Big\{ \frac{K(j)}{|K|^2} \sum_{m=1}^n K(m) \sum_{S_3 (K)=0 , \Omega_3 (K) = 0 } (\sum_{l=1}^n a_{l, K_1} \overline{a}_{l, K_2} )  a_{m, K_3}  \\
& - Z_n (L) [ \mathcal{T}_j (g,g,g)(K) +  \frac{\zeta(2)}{\log L}  \mathcal{C} (g) (K) ] \Big\} 
\end{split}
\end{equation*}
and then make use of \eqref{Th5-BGHS1-eq2} to estimate it.

\end{proof}

%

The proof of Theorem \ref{th1-NLS} and Theorem \ref{th2-NLS} can be done in a similar (and simpler) way as in the proof of Theorem \ref{th1} and Theorem \ref{th2} so we omit them. In fact the process is close to the derivation of the continuous resonant equation for the one component NLS equation shown in \cite{BGHS1}.

\section{The General Case $p \in \mathbb{N}$}
\label{S:General-case-p}

In this section, we consider a generalized version of \eqref{VNLS}
\begin{equation} \label{VNLS-p}
-i\partial_t E + \frac{1}{2 \pi} \Delta E = \Delta^{-1} \nabla \nabla \cdot (|E|^{2p} E) , \ E|_{t=0} = \epsilon E_0 , \ (t, x) \in  \mathbb{R} \times \mathbb{T}^n_L  .
\end{equation}
Here $p \in \mathbb{N}$, $E = E(x, t) : \mathbb{T}^n_L \times \mathbb{R} \rightarrow \mathbb{C}^n$ is an \emph{irrotational} complex-valued vector field (see \cite{CW1}), and $E_0 = E_0(x) : \mathbb{T}^n_L  \rightarrow \mathbb{C}^n$ is a fixed \emph{irrotational} complex-valued vector field. Using the ansatz $E= \epsilon u$, then the equation for $u$ is
\begin{equation} \label{VNLS-p-rescaled}
-i\partial_t u + \frac{1}{2 \pi} \Delta u = \epsilon^{2p} \Delta^{-1} \nabla \nabla \cdot (|u|^{2p} u) , \ u|_{t=0} =   E_0 ,  \ (t, x) \in  \mathbb{R} \times \mathbb{T}^n_L  .
\end{equation}
Define $\mathcal{T}^{p, n} = (\mathcal{T}_j^{p, n} )_{j=1, 2, \cdots , n} $, 
\begin{equation*}
\begin{split}
\mathcal{T}_j^{p, n} (g, g, g) (K) 
& := \frac{K(j)}{|K|^2} \sum_{m=1}^n K(m) \int_{\mathbb{R}^{n(2p+1)}} \big(\sum_{l=1}^n g_{l} (K_1)  \overline{g}_{l} (  K_2) \big) \\
& \quad \cdots \big(\sum_{l=1}^n g_{l} (  K_{2p-1})  \overline{g}_{l} (  K_{2p} ) \big) \cdot g_{m} ( K_{2p+1}) \\
& \quad \cdot   \delta_{\mathbb{R}^n} (S_{2p+1} (K)) \delta_{\mathbb{R}} (\Omega_{2p+1} (K)) dK_1 dK_2 dK_3 \cdots dK_{2p+1}  ,  \\
\end{split}
\end{equation*}
We have

\begin{theorem} \label{th3}
Fix $l>2n$ and $0 < \gamma <1$, $pn \neq 2$. Let $g_0 \in X^{l+n+2, 3n+3}_n (\mathbb{R}^n)$, and suppose that $g(t, \xi)$ is a solution of 
\begin{equation} \label{CR-p}
-i\partial_t g(t,\xi)= \mathcal{T}^{p, n}   (g(t, \cdot) , g(t, \cdot) , g(t, \cdot)) (t, \xi) , \ \xi \in \mathbb{R}^n .
\end{equation}
over a time interval $[0, M]$ with initial data $g_0 = g(t=0)$. Denote
\begin{equation*}
B := \sup_{t \in [0, M] } \|g(t)\|_{ X^{l+n+2, 3n+3}_n (\mathbb{R}^n) } .
\end{equation*}
Let $u$ be a solution of \eqref{VNLS-p-rescaled} with initial data $u_0 = \frac{1}{L^n} \sum_{\mathbb{Z}_L^n} g_0 (K) e(K \cdot x)$, and set for $K \in \mathbb{Z}_L^n$
\begin{equation*}
a_K (t) := u_K (t) e(-|K|^2 t) .
\end{equation*} 
Then for $L$ sufficiently large and $\epsilon^2 L^\gamma$ sufficiently small, there exists a constant $C_{\gamma, M , B}$, such that for all $t \in [0, MT_R]$,
\begin{equation}
\Big\| a_K(t) - g(\frac{t}{T_R}, K)  \Big\|_{X^l_n (\mathbb{Z}_L^n)} \lesssim_{n, p} C_{\gamma, M, B} (\delta (L) + \epsilon^2 L^\gamma) ,
\end{equation}
where
\begin{equation}
T_R := \frac{L^{2n}}{\epsilon^2 Z_n (L)} .
\end{equation}
\end{theorem}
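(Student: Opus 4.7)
The plan is to follow the same three-step architecture as the proof of Theorem \ref{th1} (normal form transformation, single-shell lattice-to-integral comparison, Gronwall bootstrap), with Proposition \ref{Th6-BGHS1} item 2) playing the role that Proposition \ref{Th5-BGHS1} played in the cubic case. Because the nonlinearity now has $2p+1$ factors, every step must be adapted to $(2p+1)$-multilinear forms, and the assumption $pn\neq 2$ is used precisely so that the cleaner part 2) of Proposition \ref{Th6-BGHS1} applies with the polynomial error $L^{-1+}$.

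First, I would perform the normal form transformation, writing $v_j = u_j + \sum_{d=1}^{P}\epsilon^{2pd}H^j_{2pd+1}(u)$, where each multilinear form is constructed by dividing the non-resonant part of the nonlinearity at order $\epsilon^{2pd}$ by $2\pi\Omega_{2pd+1}(K,K_1,\dots,K_{2pd+1})$ on the set $\Omega_{2pd+1}\neq 0$, exactly analogous to \eqref{H_2d+1-eqn}. After this substitution, the equation for $v_{j,K}$ contains only: (I) the purely resonant piece
\[
\frac{\epsilon^{2p}}{L^{2pn}}\frac{K(j)}{|K|^2}\sum_{m=1}^n K(m)\sum_{\substack{S_{2p+1}(K)=0\\ \Omega_{2p+1}(K)=0}}\Bigl(\sum_l a_{l,K_1}\overline{a}_{l,K_2}\Bigr)\cdots a_{m,K_{2p+1}},
\]
(II) intermediate correction terms at orders $\epsilon^{2pd}$ for $2\le d\le P$ that also live on the resonant set, and (III) a top-order remainder of size $\epsilon^{2p(P+1)}$.

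Second, I would prove the analogue of Lemma \ref{L:lm13-BGHS1}: inductively on $d$,
\[
\|H^j_{2pd+1}(u)\|_{X^l}\ \lesssim_{n,p}\ L^{+}\|u\|_{X^l_n}^{2pd+1}.
\]
The base case $d=1$ uses $|K(j)K(m)|/|K|^2\le 1$ together with Proposition \ref{Th4-BGHS1} applied to the sum over $\{S_{2p+1}=0\}$, following the $p=1$ template but with the cardinality bound $Z_n(L)$ producing the harmless $L^+$ factor. The inductive step is identical in spirit to Lemma \ref{L:lm13-BGHS1}: write $H^j_{2pd+1}$ via its recursive definition, bound the innermost $\sum_{N} 1/|\Omega|$ sum as in the base case, and fold it back into the bound at rank $d-1$. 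This immediately yields $\|v_j-u_j\|_{X^l}\lesssim L^+\sum_d\epsilon^{2pd}\|u\|_{X^l_n}^{2pd+1}$.

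Third, I would set $w_{j,K}=b_{j,K}(t)-g_j(t/T_R,K)$ and estimate $\partial_t w$ term by term. Term (I) is split into a \emph{linearization} piece, controlled by $CB^{2p}\,(\epsilon^{2p}Z_n(L)/L^{2pn})\,\|a_K-g\|_{X^l_n}$ via Proposition \ref{Th4-BGHS1}, and a \emph{discretization} piece — the difference between the lattice resonant sum applied to $g$ and $Z_n(L)\,\mathcal{T}^{p,n}_j(g,g,g)(K)$ — which by Proposition \ref{Th6-BGHS1} item 2) is bounded by $CB^{2p+1}\,(\epsilon^{2p}Z_n(L)/L^{2pn})\,\delta(L)$. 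Terms (II) and (III) are estimated using Proposition \ref{Th4-BGHS1} and the $H$-form bound above, producing factors of $\epsilon^{2p}L^\gamma$ and $\epsilon^{2p(P+1)}L^\gamma$ respectively. Integrating on $[0,MT_R]$, combining with $\|v-u\|_{X^l_n}\lesssim L^\gamma\sum_d\epsilon^{2pd}B^{2pd+1}$, and applying Gronwall gives a bound $C_{\gamma,B,n,p}(\delta(L)+\epsilon^{2p}L^\gamma)e^{CB^{2p}M}$; choosing $L$ large, $\epsilon^{2p}L^\gamma$ small, and $P$ large closes the bootstrap $\|a_K(t)\|_{X^l_n}\le 2B$.

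The main obstacle I expect is the combinatorial bookkeeping in step two: for $p>1$, each application of the recursion in \eqref{H_2d+1-eqn} creates substitutions at every one of the $2p+1$ factor positions, so the induction must be set up carefully to check that Proposition \ref{Th4-BGHS1} (with its factor $Z_n(L)$ absorbed into $L^+$) applies uniformly across all of them and that the denominators $\Omega_{2pd+1}\neq 0$ never create singular contributions. Once this bookkeeping is in place, the Gronwall step is essentially a mechanical repetition of the argument in Section \ref{S:Proof-main-th}.
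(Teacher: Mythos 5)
Your proposal is correct and follows exactly the route the paper takes: the paper's own proof of Theorem \ref{th3} consists of the single remark that one repeats the argument of Theorem \ref{th1} (normal form transformation, Lemma \ref{L:lm13-BGHS1}-type bounds, and the Gronwall bootstrap of Section \ref{S:Proof-main-th}) with Proposition \ref{Th5-BGHS1} replaced by Proposition \ref{Th6-BGHS1}, which is precisely your three-step plan spelled out in more detail. Your version, with the orders $\epsilon^{2pd}$ and degrees $2pd+1$ made explicit, is in fact more careful than the paper's one-line indication.
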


\begin{theorem} \label{th4}
Let $pn=2$. Fix $l>4$ and $0 < \gamma <1$. Let $g_0 \in X^{l+6, 15}_2 (\mathbb{R}^2)$, and suppose that $g(t, \xi)$ is a solution of 
\begin{equation} \label{CR-p-np=2}
-i\partial_t g(t,\xi)= \mathcal{T}^{p, n}   (g(t, \cdot) , g(t, \cdot) , g(t, \cdot)) (t, \xi) + \frac{\zeta(2)}{\log L} \mathcal{C} (g(t, \cdot)) (t, \xi) , \ \xi \in \mathbb{R}^2 .
\end{equation}
over a time interval $[0, M]$ with initial data $g_0 = g(t=0)$. Here $\mathcal{C} (g(t, \cdot))$ is a correction term defined in \eqref{Th6-BGHS1-eq1}. Denote
\begin{equation*}
B := \sup_{t \in [0, M] } \|g(t)\|_{ X^{l+6, 15}_2 (\mathbb{R}^2) } .
\end{equation*}
Let $u$ be a solution of \eqref{VNLS-p-rescaled} with initial data $u_0 = \frac{1}{L^2} \sum_{\mathbb{Z}_L^2} g_0 (K) e(K \cdot x)$, and set for $K \in \mathbb{Z}_L^2$
\begin{equation*}
a_K (t) := u_K (t) e(-|K|^2 t) .
\end{equation*} 
Then for $L$ sufficiently large and $\epsilon^2 L^\gamma$ sufficiently small, there exists a constant $C_{\gamma, M , B}$, such that for all $t \in [0, MT_R]$,
\begin{equation}
\Big\| a_K(t) - g(\frac{t}{T_R}, K)  \Big\|_{X^l_2 (\mathbb{Z}_L^2)} \lesssim_p C_{\gamma, M, B} ( \frac{1}{L^{1/3}-\gamma} + \epsilon^2 L^\gamma) ,
\end{equation}
where
\begin{equation}
T_R := \frac{L^{4}}{\epsilon^2 Z_2 (L)} = \frac{\zeta (2) L^2}{\epsilon^2 \log L} .
\end{equation}
\end{theorem}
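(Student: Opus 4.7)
The plan is to follow the same two-step scheme that produced Theorems \ref{th1} and \ref{th2}: first, a normal form transformation eliminates all non-resonant contributions up to a high order in $\epsilon$; second, the remaining resonant sum is compared to its continuum limit via Proposition \ref{Th6-BGHS1}, and the difference is controlled by Gronwall. The difference from Theorem \ref{th2} is only that the nonlinearity is of degree $2p+1$ rather than $3$, so one uses the $(2p+1)$-multilinear version of Proposition \ref{Th6-BGHS1} in the regime $pn = 2$, which carries the logarithmic correction operator $\mathcal{C}$.

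First I would perform a normal form transformation analogous to \eqref{D:v}, setting $v_j = u_j + \sum_{d=1}^{P} \epsilon^{2pd} H_{2pd+1}^j(u)$, with $H_{2pd+1}^j$ a $(2pd+1)$-multilinear form, chosen iteratively to cancel all non-resonant interactions at each order, in direct analogy with \eqref{H_2d+1-eqn} but with the $(2p+1)$-linear input from $\Delta^{-1}\nabla\nabla\cdot(|u|^{2p}u)$ in place of the cubic one. A Lemma \ref{L:lm13-BGHS1}-type estimate, whose proof is an induction on $d$ using Proposition \ref{Th4-BGHS1} applied to each $(2p+1)$-resonant block (together with $|K(j)K(m)|/|K|^2 \le 1$), yields $\|H_{2pd+1}^j(u)\|_{X^l} \lesssim L^{+}\|u\|_{X^l_n}^{2pd+1}$ and therefore $\|v_j - u_j\|_{X^l} \lesssim L^\gamma \sum_{d=1}^P \epsilon^{2pd}\|u\|_{X^l_n}^{2pd+1}$, reducing the task to comparing $b_{j,K}(t) := v_{j,K}(t)\, e(-|K|^2 t)$ with $g_j(t/T_R, K)$.

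Next, I would write the equation for $w_{j,K} := b_{j,K}(t) - g_j(t/T_R, K)$ in the form
\begin{equation*}
-i\partial_t w_{j,K} = \widetilde{I} + II + III,
\end{equation*}
where $\widetilde{I}$ is the principal resonant sum minus $Z_2(L)[\mathcal{T}^{p,n}_j(g,g,g)(K) + \tfrac{\zeta(2)}{\log L}\mathcal{C}(g)(K)]$, and $II$, $III$ collect the higher-order (still resonant) normal-form remainders and the $O(\epsilon^{2p(P+1)})$ remainder from the incomplete transformation. The terms $II$ and $III$ are handled exactly as in the proof of Theorem \ref{th1}: repeated application of Proposition \ref{Th4-BGHS1} and the $H_{2pd+1}^j$ bound gives $\|II\|_{X^l} \lesssim C_{\gamma,B}\,\tfrac{\epsilon^{2p} Z_2(L)}{L^{4}}\,\epsilon^{2p} L^\gamma$ and $\|III\|_{X^l} \lesssim C_{\gamma,B}\,\epsilon^{2p(P+1)} L^\gamma$. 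The main term $\widetilde{I}$ is split, as in \eqref{E:pf-th1-eq3}, into a Lipschitz part controlled by $\frac{\epsilon^{2p} Z_2(L)}{L^{4}}B^{2p}\|a - g(\cdot/T_R)\|_{X^l}$ and a discretization part to which item 1) of Proposition \ref{Th6-BGHS1} applies: the latter produces both the correction $\tfrac{\zeta(2)}{\log L}\mathcal{C}(g)$ (which is absorbed into the CR equation \eqref{CR-p-np=2}) and the residual error $\|\tilde{\Delta}(W)\|_{X^l} \lesssim L^{-1/3+\gamma} B^{2p+1}$.

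Integrating in $t \in [0, MT_R]$, collecting the pointwise-in-$t$ contribution $C_{\gamma,B} L^\gamma \epsilon^{2p}$ from $\|a - b\|_{X^l}$, and invoking Gronwall with constant $C_0 B^{2p} M$ then yields
\begin{equation*}
\sup_{0 \le t \le MT_R} \Big\| a_K(t) - g(\tfrac{t}{T_R}, K)\Big\|_{X^l_2(\mathbb{Z}_L^2)} \lesssim_p C_{\gamma,M,B}\bigl( L^{-1/3 + \gamma} + \epsilon^2 L^\gamma\bigr),
\end{equation*}
for $L$ large, $\epsilon^2 L^\gamma$ small, and $P$ large enough to make $\epsilon^{2p(P+1)} L^{2+\gamma}$ negligible; a bootstrap then maintains $\|a_K(t)\|_{X^l_2} \le 2B$ throughout the interval. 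The main obstacle is strictly bookkeeping: keeping the $(2p+1)$-linear normal form recursion consistent with the projector $\Delta^{-1}\nabla\nabla\cdot$ at every order, and verifying that the regularity thresholds $X^{l+6,15}_2$ match the hypotheses of Proposition \ref{Th6-BGHS1} item 1) for each multilinear form appearing in the recursion; no genuinely new estimate beyond those already recorded in Section \ref{S:Prelim-est} is needed.
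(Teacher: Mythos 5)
Your proposal is correct and follows essentially the same route as the paper: the paper's own proof of Theorem \ref{th4} consists of the single remark that one repeats the argument for Theorems \ref{th1} and \ref{th2} with Proposition \ref{Th5-BGHS1} replaced by Proposition \ref{Th6-BGHS1}, and your write-up (the $(2pd+1)$-multilinear normal form, the $\widetilde{I}+II+III$ decomposition, the use of item 1) of Proposition \ref{Th6-BGHS1} to produce the $\frac{\zeta(2)}{\log L}\mathcal{C}$ correction and the $L^{-1/3+}$ residual, followed by Gronwall and the bootstrap) is precisely the intended instantiation of that remark. No substantive deviation or gap.
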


The proofs for Theorem \ref{th3} and Theorem \ref{th4} are almost the same as the ones for Theorem \ref{th1} and Theorem \ref{th2}. It suffices to replace Proposition \ref{Th5-BGHS1} by Proposition \ref{Th6-BGHS1}. 
 

In a similar manner, we also consider a generalized version of the coupled NLS equation \eqref{NLS}:
\begin{equation} \label{NLS-p}
-i\partial_t E + \frac{1}{2 \pi} \Delta E =   |E|^{2p} E , \ E|_{t=0} = \epsilon E_0 , \ (t, x) \in  \mathbb{R} \times \mathbb{T}^n_L  .
\end{equation}
Here $p \in \mathbb{N}$, $E = E(x, t) : \mathbb{T}^n_L \times \mathbb{R} \rightarrow \mathbb{C}^d$ being a complex-valued vector field, and $E_0 = E_0(x) : \mathbb{T}^n_L  \rightarrow \mathbb{C}^d$ is a fixed complex-valued vector field. Using the ansatz $E= \epsilon u$, then the equation for $u$ is
\begin{equation} \label{NLS-p-rescaled}
-i\partial_t u + \frac{1}{2 \pi} \Delta u = \epsilon^{2p}   |u|^{2p} u , \ u|_{t=0} =   E_0 ,  \ (t, x) \in  \mathbb{R} \times \mathbb{T}^n_L  .
\end{equation}
Define $\mathcal{T}^{p, n} = (\mathcal{T}_j^{p, n} )_{j=1, 2, \cdots , d} $, 
\begin{equation*}
\begin{split}
\mathcal{T}_j^{p, n} (g, g, g) (K) 
& :=  \int_{\mathbb{R}^{n(2p+1)}} \big(\sum_{l=1}^d g_{l} (K_1)  \overline{g}_{l} (  K_2) \big)  \cdots \big(\sum_{l=1}^d g_{l} (  K_{2p-1})  \overline{g}_{l} (  K_{2p} ) \big) \cdot g_{m} ( K_{2p+1}) \\
& \quad \cdot   \delta_{\mathbb{R}^n} (S_{2p+1} (K)) \delta_{\mathbb{R}} (\Omega_{2p+1} (K)) dK_1 dK_2 dK_3 \cdots dK_{2p+1}  ,  \\
\end{split}
\end{equation*}
We can obtain, using the same idea as in the proof of Theorem \ref{th1} and \ref{th2}:

\begin{theorem} \label{th6}
Fix $l>2n$ and $0 < \gamma <1$, $pn \neq 2$. Let $g_0 \in X^{l+n+2, 3n+3}_n (\mathbb{R}^n)$, and suppose that $g(t, \xi)$ is a solution of 
\begin{equation} \label{CR-NLS-p}
-i\partial_t g(t,\xi)= \mathcal{T}^{p, n}   (g(t, \cdot) , g(t, \cdot) , g(t, \cdot)) (t, \xi) , \ \xi \in \mathbb{R}^n .
\end{equation}
over a time interval $[0, M]$ with initial data $g_0 = g(t=0)$. Denote
\begin{equation*}
B := \sup_{t \in [0, M] } \|g(t)\|_{ X^{l+n+2, 3n+3}_d (\mathbb{R}^n) } .
\end{equation*}
Let $u$ be a solution of \eqref{NLS-p-rescaled} with initial data $u_0 = \frac{1}{L^n} \sum_{\mathbb{Z}_L^n} g_0 (K) e(K \cdot x)$, and set for $K \in \mathbb{Z}_L^n$
\begin{equation*}
a_K (t) := u_K (t) e(-|K|^2 t) .
\end{equation*} 
Then for $L$ sufficiently large and $\epsilon^2 L^\gamma$ sufficiently small, there exists a constant $C_{\gamma, M , B}$, such that for all $t \in [0, MT_R]$,
\begin{equation}
\Big\| a_K(t) - g(\frac{t}{T_R}, K)  \Big\|_{X^l_d (\mathbb{Z}_L^n)} \lesssim_{n, d, p} C_{\gamma, M, B} (\delta (L) + \epsilon^2 L^\gamma) ,
\end{equation}
where
\begin{equation}
T_R := \frac{L^{2n}}{\epsilon^2 Z_n (L)} .
\end{equation}
\end{theorem}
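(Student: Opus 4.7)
The plan is to follow closely the architecture of the proof of Theorem \ref{th1}, with two adaptations: first, the cubic resonance is replaced by a $(2p+1)$-linear one, so Proposition \ref{Th6-BGHS1} substitutes for Proposition \ref{Th5-BGHS1}; second, the curl-free projection $\Delta^{-1}\nabla\nabla\cdot$ is absent from \eqref{NLS-p-rescaled}, which strictly simplifies the multilinear algebra, since there are no factors of the form $\tfrac{K(j)}{|K|^2}\sum_m K(m)$ to track. I would begin by expanding $u$ in Fourier modes, passing to the interaction variable $a_{j,K}(t) := u_{j,K}(t)\, e(-|K|^2 t)$, and writing the resulting ODE system as a sum over configurations $S_{2p+1}(K)=0$ with the oscillatory factor $\exp(2\pi i \Omega_{2p+1}(K) t)$ marking the non-resonant part.

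Next, following Section \ref{S:Normal-form-transformation}, I introduce a normal-form variable $v_j = u_j + \sum_{d=1}^P \epsilon^{2pd}\, H_{2pd+1}^j(u)$, where each $H_{2pd+1}^j$ is a $(2pd+1)$-linear form defined iteratively with small denominators $1/\Omega_{2pd+1}$ so as to cancel all non-resonant interactions up to order $\epsilon^{2p(P+1)}$. A statement analogous to Lemma \ref{L:lm13-BGHS1} then yields $\|H_{2pd+1}^j(u)\|_{X^l} \lesssim L^{+} \|u\|^{2pd+1}_{X^l_d}$, with the same inductive proof sketched in the excerpt; the base case reduces to a single resonant-type sum controlled by Proposition \ref{Th4-BGHS1}, and in the present scalar setting one does not even need the auxiliary bound $|K(j)K(m)|/|K|^2 \leq 1$. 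Setting $b_{j,K}(t) := v_{j,K}(t)\, e(-|K|^2 t)$, the equation for $b_{j,K}$ then contains, at leading order $\epsilon^{2p}$, only the sum over genuine resonances $\{S_{2p+1}(K)=0,\ \Omega_{2p+1}(K)=0\}$, plus two controllable higher-order tails.

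The comparison with the CR equation \eqref{CR-NLS-p} is then set up exactly as in \eqref{E:pf-th1-eq2}: defining $w_{j,K}(t) := b_{j,K}(t) - g_j(t/T_R, K)$, I decompose $-i\partial_t w_{j,K}$ into a \emph{linear-in-$w$} part $I_1$, obtained by swapping two of the factors $a$ for $g$ and bounded using Proposition \ref{Th4-BGHS1}; a \emph{discrete-to-continuum error} $I_2$, controlled by Proposition \ref{Th6-BGHS1} (part 2 applies since $pn\neq 2$) and giving a contribution $\lesssim B^{2p+1}\, \tfrac{\epsilon^{2p} Z_n(L)}{L^{2pn}}\, \delta(L)$; and two \emph{normal-form tail terms} $II$, $III$ arising from the iterative $H^j_{2pd+1}$ and from $H^j_{2P+1}$, bounded respectively by $C_{\gamma,B}\, \tfrac{\epsilon^{2p} Z_n(L)}{L^{2pn}}\, \epsilon^{2p} L^\gamma$ and $C_{\gamma,B}\, \epsilon^{2p(P+1)} L^\gamma$ via Lemma \ref{L:lm13-BGHS1}. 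Integrating over $[0, MT_R]$ and applying Gronwall's inequality absorbs $I_1$; choosing $L$ large, $\epsilon^2 L^\gamma$ small, and $P$ large enough to kill the $\epsilon^{2p(P+1)}$ tail then closes a standard bootstrap maintaining $\|a_K(t)\|_{X^l_d}\leq 2B$ throughout $[0, MT_R]$, which delivers the announced bound.

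The main technical obstacle, exactly as in Theorem \ref{th1}, is the iterative control of the multilinear forms $H_{2pd+1}^j$: each layer of the normal form introduces a small-denominator factor $1/\Omega_{2pd+1}$, and passing from rank $d-1$ to rank $d$ requires folding this singular factor into an already multilinear sum while preserving a uniform-in-$L$ bound up to the harmless $L^{+}$ loss. This step relies crucially on Proposition \ref{Th4-BGHS1} applied at each level of recursion, but because the scalar NLS nonlinearity lacks the curl-free projection present in VNLS, the induction goes through with only cosmetic changes to Lemma \ref{L:lm13-BGHS1}. Once this inductive bound is in place, the remainder of the argument is the routine circle-method/Gronwall combination used in the excerpt.
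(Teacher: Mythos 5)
Your proposal follows essentially the same route as the paper, which gives no separate proof of Theorem \ref{th6} and states only that it is obtained "using the same idea" as Theorem \ref{th1}, with Proposition \ref{Th5-BGHS1} replaced by Proposition \ref{Th6-BGHS1} and the argument simplified by the absence of the projection $\Delta^{-1}\nabla\nabla\cdot$; your degree-$(2pd+1)$ normal form, the $I_1/I_2/II/III$ decomposition, and the Gronwall--bootstrap closure are precisely the intended adaptation. The one point worth making explicit (which the paper also elides) is that the cubic counting statements Proposition \ref{Th4-BGHS1} and Lemma \ref{L:lm13-BGHS1} must themselves be upgraded to $(2p+1)$-linear resonant sums, for which the relevant input is Proposition \ref{Th6-BGHS1}.
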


\begin{theorem} \label{th7}
Let $pn=2$. Fix $l>4$ and $0 < \gamma <1$. Let $g_0 \in X^{l+6, 15}_d (\mathbb{R}^2)$, and suppose that $g(t, \xi)$ is a solution of 
\begin{equation} \label{CR-NLS-p-np=2}
-i\partial_t g(t,\xi)= \mathcal{T}^{p, n}   (g(t, \cdot) , g(t, \cdot) , g(t, \cdot)) (t, \xi) + \frac{\zeta(2)}{\log L} \mathcal{C} (g(t, \cdot)) (t, \xi) , \ \xi \in \mathbb{R}^2 .
\end{equation}
over a time interval $[0, M]$ with initial data $g_0 = g(t=0)$. Here $\mathcal{C} (g(t, \cdot))$ is a correction term defined in \eqref{Th6-BGHS1-eq1}. Denote
\begin{equation*}
B := \sup_{t \in [0, M] } \|g(t)\|_{ X^{l+6, 15}_d (\mathbb{R}^2) } .
\end{equation*}
Let $u$ be a solution of \eqref{NLS-p-rescaled} with initial data $u_0 = \frac{1}{L^2} \sum_{\mathbb{Z}_L^2} g_0 (K) e(K \cdot x)$, and set for $K \in \mathbb{Z}_L^2$
\begin{equation*}
a_K (t) := u_K (t) e(-|K|^2 t) .
\end{equation*} 
Then for $L$ sufficiently large and $\epsilon^2 L^\gamma$ sufficiently small, there exists a constant $C_{\gamma, M , B}$, such that for all $t \in [0, MT_R]$,
\begin{equation}
\Big\| a_K(t) - g(\frac{t}{T_R}, K)  \Big\|_{X^l_d (\mathbb{Z}_L^2)} \lesssim_{d, p} C_{\gamma, M, B} ( \frac{1}{L^{1/3}-\gamma} + \epsilon^2 L^\gamma) ,
\end{equation}
where
\begin{equation}
T_R := \frac{L^{4}}{\epsilon^2 Z_2 (L)} = \frac{\zeta (2) L^2}{\epsilon^2 \log L} .
\end{equation}
\end{theorem}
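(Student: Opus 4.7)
The plan is to follow the scheme used for Theorem \ref{th2} essentially verbatim, replacing Proposition \ref{Th5-BGHS1} by Proposition \ref{Th6-BGHS1} to handle the $|u|^{2p}u$ nonlinearity, and simplifying at every step since the NLS nonlinearity lacks the $\Delta^{-1}\nabla\nabla\cdot$ projector and the irrotationality constraint. Working from \eqref{NLS-p-rescaled}, I would expand $u_j$ in Fourier series, introduce $a_{j,K}(t) = u_{j,K}(t) e(-|K|^2 t)$ and derive the evolution equation for $a_{j,K}$ with nonlinearity of the form $\frac{\epsilon^{2p}}{L^{2pn}}\sum_{S_{2p+1}(K)=0}(\text{product of }2p+1\text{ factors})e(2\pi i\Omega_{2p+1}(K)t)$.

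The first main step is a normal form transformation analogous to that of Section \ref{S:Normal-form-transformation}: define $v_j = u_j + \sum_{d=1}^P \epsilon^{2pd} H^j_{(2p+1)+2p(d-1)}(u)$, iteratively chosen so that at each order in $\epsilon^{2p}$ the non-resonant interactions are eliminated, with the resulting multilinear forms $H^j_{\cdot}$ supported on $\{\Omega \neq 0\}$ and having denominators $1/\Omega$. Exactly as in Lemma \ref{L:lm13-BGHS1}, these forms satisfy $\|H^j_{\cdot}(u)\|_{X^l} \lesssim_{n,d,p} L^+ \|u\|_{X^l_d}^{\cdot}$, so $\|v-u\|_{X^l_d} \lesssim L^\gamma \sum_d \epsilon^{2pd}\|u\|^{\cdot}_{X^l_d}$ for any $\gamma>0$; this makes the passage from $a_K$ to $b_K := v_{j,K}e(-|K|^2 t)$ cost only an $\epsilon^{2p}L^\gamma$ error. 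After the transformation, the equation for $b_{j,K}$ contains only the leading purely resonant sum (over $S_{2p+1}(K) = 0, \Omega_{2p+1}(K)=0$), higher-order purely resonant polynomials of degree $\geq 2p+3$, and a boundary term of order $\epsilon^{2p(P+1)}$ controlled by Lemma \ref{L:lm13-BGHS1}.

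The second main step is the key asymptotic: for $pn = 2$, apply item 1) of Proposition \ref{Th6-BGHS1} to replace the leading resonant sum by $Z_2(L)\bigl(\mathcal{T}^{p,n}(g,g,g) + \frac{\zeta(2)}{\log L}\mathcal{C}(g)\bigr)(K)$ up to an error of size $L^{-1/3+}\|g\|_{X^{l+6,11}}^{2p+1}$. This is precisely where the correction term $\mathcal{C}$ in \eqref{CR-NLS-p-np=2} arises, and why the polynomial decay rate $L^{-1/3+\gamma}$ replaces the $(\log L)^{-1}$ rate that would appear without the correction. The higher-order resonant terms of degree $2p+2k+1$ ($k\geq 1$) from the normal form are estimated crudely using Proposition \ref{Th4-BGHS1} (adapted to the $S_{2p+1}$ resonant sets), contributing $O(\epsilon^{2p}L^\gamma)$ to the error once normalized by the time scale $T_R$.

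Finally, writing $w_{j,K} := b_{j,K}(t) - g_j(t/T_R,K)$ and using the difference between discrete and continuous trilinear expressions, the $L^1_t X^l$ estimate combined with the bootstrap assumption $\|a_K(t)\|_{X^l_d} \leq 2B$ yields
\begin{equation*}
\|w_{j,K}(t)\|_{X^l} \lesssim_{d,p} \int_0^t \frac{\epsilon^{2p} Z_2(L)}{L^{2pn}} B^{2p}\|w(s)\|_{X^l_d}\,ds + C_{\gamma,B,M}\bigl(L^{-1/3+\gamma}+\epsilon^2 L^\gamma\bigr).
\end{equation*}
Gronwall's inequality on $[0,MT_R]$ with $T_R = L^{2pn}/(\epsilon^{2p}Z_2(L))$ closes the bootstrap for $L$ large and $\epsilon^2 L^\gamma$ small, with $P$ chosen large enough to drop the residual $\epsilon^{2p(P+1)}$ term. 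The main obstacle I anticipate is bookkeeping: iterating the normal form transformation for a general power $2p+1$ produces a proliferation of multilinear forms whose combinatorial structure must be tracked so that the inductive step of Lemma \ref{L:lm13-BGHS1} still delivers only a $L^+$ loss per application of Proposition \ref{Th4-BGHS1}. The rest is essentially a transcription of the proof of Theorem \ref{th2} with the $K(j)K(m)/|K|^2$ prefactor and the curl-free constraint removed, and with $d$-component inner products $\sum_{l=1}^d$ replacing $\sum_{l=1}^n$.
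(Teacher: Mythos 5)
Your proposal is correct and follows essentially the same route as the paper, which itself disposes of Theorem \ref{th7} by asserting that one repeats the argument of Theorems \ref{th1} and \ref{th2} with Proposition \ref{Th5-BGHS1} replaced by Proposition \ref{Th6-BGHS1} and the $\Delta^{-1}\nabla\nabla\cdot$ projector and irrotationality constraint dropped. Your reconstruction — iterated normal form in powers of $\epsilon^{2p}$, the $pn=2$ counting asymptotic producing the $\frac{\zeta(2)}{\log L}\mathcal{C}$ correction with $L^{-1/3+}$ error, and the Gronwall/bootstrap closure on $[0,MT_R]$ — supplies exactly the details the paper leaves implicit.
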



\section{Analysis for \eqref{CR}}
\label{S:CR-analysis}

\subsection{Hamiltonian Structure for \eqref{CR}}
\label{S:HamilStructure}

Let $n \geq 3$, $\mathcal{T} = (\mathcal{T}_j )_{j=1, 2, \cdots , n} $, 
\begin{equation*}
\mathcal{T}_j (f, g, h) (K) = \frac{K(j)}{|K|^2} \sum_{m=1}^n K(m) \int_{\mathbb{R}^{3n}} \big(\sum_{l=1}^n f_l (K_1) \overline{g}_l (K_2) \big) h_m (K_3) \delta_{\mathbb{R}^n} (S_3 (K)) \delta_{\mathbb{R}} (\Omega_3 (K)) dK_1 dK_2 dK_3   . 
\end{equation*} 
Define
\begin{equation} \label{D:E-VNLS}
\begin{split}
\mathcal{E} (f, g, h, p)  
& := \int_{\mathbb{R}^{4n}}    \big(\sum_{l=1}^n f_l (K_1) \overline{g}_l (K_2) \big) \big(\sum_{m=1}^n   h_m (K_3) \overline{p}_m (K) \big)  \\
& \quad \cdot  \delta_{\mathbb{R}^n} (S_3 (K)) \delta_{\mathbb{R}} (\Omega_3 (K)) dK_1 dK_2 dK_3  dK . \\
\end{split}
\end{equation} 
then the (CR) equation \eqref{CR} can be derived from the Hamiltonian
\begin{equation}
\mathcal{E} (g) := \mathcal{E} (g, g, g,g) .
\end{equation}
In other words, \eqref{CR} can also be written as 
\begin{equation}
\partial_t g_j = \frac{1}{2} J_j \nabla_{\overline{g}} \mathcal{E} (g) , \ j = 1, 2, \cdots , n 
\end{equation}
where
\begin{equation}
J_j = i \frac{K(j)}{|K|^2}  K \cdot 
\end{equation}
and hence
\begin{equation}
\mathcal{T}_j (g, g, g) (K) = \frac{1}{2} \frac{K(j)}{|K|^2}  ( K \cdot \nabla_{\overline{g}} \mathcal{E} (g) )  . 
\end{equation}



From now on, we use $\check{f}$ to denote the inverse Fourier transform of $f$, and denote the harmonic oscillator $H= - \Delta + |x|^2$, and $\Pi_n$ be the projector on the $n$-th eigenspace of $H$.

\begin{lemma} \label{L:lm2.2-GHT1}
The quantity $\mathcal{E}$ satisfies
\begin{equation} \label{E:lm2.2-GHT1-eq1}
\mathcal{E} (f, g, h, p) = (2\pi)^{n-1}  \int_{\mathbb{R}} \int_{\mathbb{R}^n}  \big[ \sum_{l=1}^n e^{it \Delta} \check{f}_l   \overline{e^{it \Delta} \check{g}_l  } \big]  \big[ \sum_{m =1}^n e^{it \Delta} \check{h}_m  \overline{e^{it \Delta}   \check{p}_m } \big] (t, x) dx dt . 
\end{equation}
\begin{equation} \label{E:lm2.2-GHT1-eq2}
\mathcal{E} (f, g, h, p) = (2 \pi)^{n-1}   \int^{\pi/4}_{-\pi/4} \int_{\mathbb{R}^n}  \big[ \sum_{l=1}^n e^{-itH} \check{f}_l \overline{e^{-itH} \check{g}_l} \big] \big[ \sum_{m =1}^n e^{-itH} \check{h}_m \overline{ e^{-itH}    \check{p}_m  } \big] (t, x) dx dt  . 
\end{equation}
Consequently,  
\begin{equation} \label{E:lm2.2-GHT1-eq3}
\mathcal{T}_j (f, g, h) =  (2\pi)^{n-1}   \frac{K(j)}{|K|^2} \mathcal{F} \Big\{ \int_{\mathbb{R}}  \sum_{m =1}^n  K(m) e^{-it \Delta}   \big[   \big( \sum_{l=1}^n  e^{it \Delta} \check{f}_l   \overline{e^{it \Delta} \check{g}_l  } \big) e^{it \Delta} \check{h}_m   \big] (t, x)   dt \Big\}  . 
\end{equation}
\begin{equation} \label{E:lm2.2-GHT1-eq4}
\mathcal{T}_j (f, g, h) =  (2 \pi)^{n-1}   \frac{K(j)}{|K|^2} \mathcal{F} \Big\{  \int^{\pi/4}_{-\pi/4}   \sum_{m =1}^n  K(m) e^{it H} \big[  \big( \sum_{l=1}^n e^{-itH} \check{f}_l \overline{e^{-itH} \check{g}_l} \big) e^{-itH} \check{h}_m  \big]   (t, x) dt  \Big\}  . 
\end{equation}
\end{lemma}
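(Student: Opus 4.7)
The plan is to prove equation \eqref{E:lm2.2-GHT1-eq1} by a direct Fourier representation of the two delta functions, deduce \eqref{E:lm2.2-GHT1-eq2} from it via the lens (Mehler) transform, and finally extract \eqref{E:lm2.2-GHT1-eq3}--\eqref{E:lm2.2-GHT1-eq4} from the quadrilinear identities by Plancherel duality in the outer slot.

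\textbf{Step 1 (proof of \eqref{E:lm2.2-GHT1-eq1}).} I would substitute
\[
\delta_{\mathbb{R}^n}(S_3(K))=\frac{1}{(2\pi)^n}\int_{\mathbb{R}^n} e^{ix\cdot S_3(K)}\,dx, \qquad \delta_{\mathbb{R}}(\Omega_3(K))=\frac{1}{2\pi}\int_{\mathbb{R}} e^{it\,\Omega_3(K)}\,dt
\]
into the definition \eqref{D:E-VNLS} and interchange integration (justified by the decay of $f_l, g_l, h_m, p_m$). The four $K_j$-integrations then decouple; each produces a factor of the form $\int f_l(K_j)\,e^{\pm ix\cdot K_j}\,e^{\pm it|K_j|^2}\,dK_j$, which after regrouping in the $(K_1,K_2)$ and $(K_3,K)$ pairs according to the signs in $S_3$ and $\Omega_3$ matches the free Schr\"odinger flow $e^{it\Delta}\check{f}_l$ or its complex conjugate. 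The prefactor $(2\pi)^{n-1}$ is recovered by balancing the $(2\pi)^{-n-1}$ from the two delta representations against the powers of $2\pi$ absorbed in the four inverse Fourier transforms.

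\textbf{Step 2 (proof of \eqref{E:lm2.2-GHT1-eq2}).} I would apply the lens transform, which intertwines the free Schr\"odinger flow on $t\in\mathbb{R}$ with the harmonic-oscillator flow $e^{-isH}$ on $s\in(-\pi/4,\pi/4)$ through the change of variables $t=\tfrac{1}{2}\tan 2s$, $x=y/\cos 2s$, together with the Mehler weight $(\cos 2s)^{-n/2}\exp(-i|y|^2\tan 2s/2)$. Applied to each of the four profiles appearing in \eqref{E:lm2.2-GHT1-eq1}, the quadratic Mehler phases cancel pairwise between $\phi$ and $\bar\psi$ within each of the two blocks $\sum_l(\cdot)$ and $\sum_m(\cdot)$, and the four $(\cos 2s)^{-n/2}$ dilation factors combine with the $(t,x)\mapsto(s,y)$ Jacobian to leave the integrand invariant in form; only the interval of integration changes from $\mathbb{R}$ to $(-\pi/4,\pi/4)$, which is exactly the content of \eqref{E:lm2.2-GHT1-eq2}.

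\textbf{Step 3 (proof of \eqref{E:lm2.2-GHT1-eq3} and \eqref{E:lm2.2-GHT1-eq4}).} The key observation is the duality, valid for any admissible test field $q$,
\[
\sum_j \int \overline{q_j(K)}\,\mathcal{T}_j(f,g,h)(K)\,dK \;=\; \mathcal{E}\!\left(f,\,g,\,h,\;\tfrac{K\,(K\cdot q)}{|K|^2}\right),
\]
which follows directly from the definitions of $\mathcal{T}_j$ and $\mathcal{E}$. Plugging \eqref{E:lm2.2-GHT1-eq1} (resp.\ \eqref{E:lm2.2-GHT1-eq2}) into the right-hand side and applying Plancherel in $K$ to move $\overline{q_j}$ back to physical space identifies $\mathcal{T}_j$ as $\tfrac{K(j)}{|K|^2}$ times the Fourier transform of the time-and-space integrated trilinear quantity in the bracket; the sum $\sum_m K(m)$ emerges from the projection $K(K\cdot\,\cdot\,)/|K|^2$ acting on the $h$-slot. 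Since $q$ is arbitrary within the irrotational class (which is the natural class preserved by the flow), this yields \eqref{E:lm2.2-GHT1-eq3}--\eqref{E:lm2.2-GHT1-eq4}.

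\textbf{Main obstacle.} The delicate point is Step 2: a careful accounting of the lens-transform Jacobians, dilation factors, and quadratic Mehler phases is needed to verify that they conspire exactly to preserve the constant $(2\pi)^{n-1}$ while replacing $\int_{\mathbb{R}}dt$ by $\int_{-\pi/4}^{\pi/4}ds$. The algebra is routine but unforgiving of sign errors and of miscounted powers of $\cos 2s$.
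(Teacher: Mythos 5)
Your proposal is correct and follows essentially the same route as the paper: Fourier representation of the two delta functions to obtain \eqref{E:lm2.2-GHT1-eq1}, the lens transform with pairwise cancellation of the Mehler phases to pass to \eqref{E:lm2.2-GHT1-eq2}, and recovery of $\mathcal{T}_j$ from $\mathcal{E}$ by varying the fourth slot (the paper phrases this as $\mathcal{T}_j=\tfrac12 J_j\nabla_{\overline p}\mathcal{E}$, which is the same duality you write as a pairing against a test field $q$). No gaps.
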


\begin{proof}
\begin{equation}
\begin{split}
& \quad \mathcal{E} (f, g, h, p)   \\
& =  \int_{\mathbb{R}^{4n}}        \big(\sum_{l=1}^n f_l (K_1) \overline{g}_l (K_2) \big)  \big(\sum_{m=1}^n  h_m (K_3) \overline{p}_m (K)  \big)  \delta_{\mathbb{R}^n} (S_3 (K)) \delta_{\mathbb{R}} (\Omega_3 (K)) dK_1 dK_2 dK_3 dK   \\
& =  (2 \pi)^{-1}  \int_{\mathbb{R}}  \int_{\mathbb{R}^{4n}} e^{- it \Omega_3 (K)}      \big(\sum_{l=1}^n f_l (K_1) \overline{g}_l (K_2) \big) \big(\sum_{m=1}^n  h_m (K_3) \overline{p}_m (K)  \big)  \\
& \quad \cdot  \delta_{\mathbb{R}^n} (S_3 (K))  dK_1 dK_2 dK_3 dK dt   \\
& =  (2 \pi)^{-1-n} \int_{\mathbb{R}}  \int_{\mathbb{R}^{n}} \int_{\mathbb{R}^{4n}} e^{ ix S_3 (K)}  e^{- it \Omega_3 (K)}     \\
& \quad \cdot   \big(\sum_{l=1}^n f_l (K_1) \overline{g}_l (K_2) \big)  \big(\sum_{m=1}^n  h_m (K_3) \overline{p}_m (K) \big)   dK_1 dK_2 dK_3 dK dx dt  \\
& =  (2 \pi)^{n-1} \int_{\mathbb{R}}  \int_{\mathbb{R}^{n}}  \big[ \sum_{l=1}^n e^{it \Delta} \check{f}_l (x) \overline{e^{it \Delta} \check{g}_l (x)} \big]  \big[ \sum_{m=1}^n  e^{it \Delta} \check{h}_m (x) \overline{e^{it \Delta}  \check{p}_m (x)} \big] dx dt .  \\
\end{split}
\end{equation}
This gives \eqref{E:lm2.2-GHT1-eq1}.

Now, let $\check{f}_H := e^{-itH \check{f}}$, $\check{f}_\Delta := e^{it\Delta \check{f}}$, and similarly for $\check{g}$, $\check{h}$, $\check{p}$. Then the lens transform gives
\begin{equation}
\check{f}_\Delta (t, x) = \frac{1}{\sqrt{1+4t^2}} \check{f}_H (\frac{\arctan (2t)}{2} , \frac{x}{\sqrt{1+4t^2}} ) e^{\frac{i |x|^2 t}{1+4t^2}} .
\end{equation}
We make the change of variables $y= \frac{x}{\sqrt{1+4t^2}}$ in the equation \eqref{E:lm2.2-GHT1-eq1}, and let $\tau = \frac{\arctan (2t)}{2}$:
\begin{equation}
\begin{split}
\mathcal{E} (f, g, h, p)   
& = (2 \pi)^{n-1}  \int_{\mathbb{R}} \int_{\mathbb{R}^n}  \big[ \sum_{l=1}^n \check{f}_{H, l} \overline{\check{g}_{H, l}} \big] \big[ \sum_{m=1}^n  \check{h}_{H, m} \overline{ \check{p}_{H, m} } \big] (\frac{\arctan (2t)}{2}, y) dy dt \\
& = (2 \pi)^{n-1}  \int^{\pi/4}_{-\pi/4} \int_{\mathbb{R}^n}  \big[ \sum_{l=1}^n \check{f}_{H, l} \overline{\check{g}_{H, l}} \big]  \big[ \sum_{m=1}^n \check{h}_{H, m} \overline{ \check{p}_{H, m}} \big]  (\tau, y) dy d\tau . \\
\end{split}
\end{equation} 
This gives \eqref{E:lm2.2-GHT1-eq2}. The relations for $\mathcal{T}$ are obtained using $  \mathcal{T}_j (f, g, h) =  \frac{1}{2} J_j \nabla_{\overline{p}} \mathcal{E} (f, g, h, p)$, $\mathcal{T}_j (g, g, g) (K) = \frac{1}{2} \frac{K(j)}{|K|^2}  ( K \cdot \nabla_{\overline{g}} \mathcal{E} (g) ) $.
\end{proof}

\begin{lemma} \label{L:lm2.3-GHT1}
The quantity $\mathcal{E}$ satisfies
\begin{equation} \label{E:lm2.3-GHT1-eq1}
\begin{split}
& \mathcal{E} (f, g, h, p) =  \\
& \frac{(2\pi)^{n}}{4}  \sum_{\substack{\ell_1 , \ell_2 , \ell_3 , \ell_4 \in \mathbb{Z}_{\geq 0} \\ \ell_1 + \ell_3 = \ell_2 + \ell_4}}  \int_{\mathbb{R}^n}  \big[ \sum_{l=1}^n  \Pi_{\ell_1} \check{f}_l (x)  \overline{\Pi_{\ell_2}  \check{g}_l (x) } \big] \big[ \sum_{m =1}^n   \Pi_{\ell_3}  \check{h}_m (x) \overline{\Pi_{\ell_4}   \check{p}_m (x)} \big] dx  . 
\end{split}
\end{equation}
Hence
\begin{equation} \label{E:lm2.3-GHT1-eq2}
\begin{split}
& \mathcal{T}_j (f, g, h) =  \\
& \frac{(2\pi)^{n}}{4}   \frac{K(j)}{|K|^2}  \sum_{m =1}^n  K(m)   \sum_{\substack{\ell_1 , \ell_2 , \ell_3 , \ell_4 \in \mathbb{Z}_{\geq 0} \\ \ell_1 + \ell_3 = \ell_2 + \ell_4}} \mathcal{F} \Big\{  \Pi_{\ell_4}   \Big[  \big[ \sum_{l=1}^n  \Pi_{\ell_1} \check{f}_l  (x)  \overline{ \Pi_{\ell_2}  \check{g}_l (x) } \big] \Pi_{\ell_3}  \check{h}_m (x) \Big]   \Big\}  . 
\end{split}
\end{equation}
\end{lemma}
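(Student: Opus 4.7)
The plan is to substitute the Hermite eigenfunction decomposition of each input into the lens-transformed identity \eqref{E:lm2.2-GHT1-eq2} and to exploit two cancellation mechanisms: phase oscillation from the Hermite eigenvalues, and the parity of Hermite eigenfunctions. Writing $\check{f}_l = \sum_{\ell_1 \geq 0} \Pi_{\ell_1}\check{f}_l$ (and analogously for $\check{g},\check{h},\check{p}$), and recalling that the eigenvalue of $H$ on $\Pi_k$ is $2k+n$, each exponential factor in \eqref{E:lm2.2-GHT1-eq2} produces a phase $e^{\pm i(2\ell_j+n)t}$. Since the shifts by $n$ appear in alternating conjugate pairs, they cancel, and expanding the full product yields an integrand of the form
\begin{equation*}
\sum_{\ell_1,\ell_2,\ell_3,\ell_4} e^{-2itM}\Big[\sum_l \Pi_{\ell_1}\check{f}_l\,\overline{\Pi_{\ell_2}\check{g}_l}\Big]\Big[\sum_m \Pi_{\ell_3}\check{h}_m\,\overline{\Pi_{\ell_4}\check{p}_m}\Big], \qquad M := \ell_1-\ell_2+\ell_3-\ell_4.
\end{equation*}

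Next, the integration in $t$ over $[-\pi/4,\pi/4]$ of $e^{-2itM}$ equals $\pi/2$ when $M=0$, vanishes for nonzero even $M$, and equals $\sin(M\pi/2)/M = \pm 1/M$ for odd $M$. The central observation is that the odd-$M$ contributions are eliminated by a parity argument: each range of $\Pi_{\ell_j}$ is spanned by tensor products of one-dimensional Hermite functions of total degree $\ell_j$, so every element has parity $(-1)^{\ell_j}$ under $x\mapsto -x$. Therefore the spatial integrand has parity $(-1)^{\ell_1+\ell_2+\ell_3+\ell_4} = (-1)^M$, and for odd $M$ the integral over $\mathbb{R}^n$ vanishes. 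Combining the two cancellations leaves only $M=0$, equivalently $\ell_1+\ell_3 = \ell_2+\ell_4$, with surviving coefficient $(2\pi)^{n-1}\cdot \pi/2 = (2\pi)^n/4$, giving \eqref{E:lm2.3-GHT1-eq1}.

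For \eqref{E:lm2.3-GHT1-eq2}, I apply the same Hermite expansion to the dual identity \eqref{E:lm2.2-GHT1-eq4}. The outer factor $e^{itH}$ acts on the expansion of the trilinear product and produces the phase $e^{it(2\ell_4+n)}$ paired with $\Pi_{\ell_4}$; the total phase is again $e^{-2itM}$. The parity argument carries over essentially verbatim: the trilinear product $\sum_l \Pi_{\ell_1}\check{f}_l\overline{\Pi_{\ell_2}\check{g}_l}\Pi_{\ell_3}\check{h}_m$ has parity $(-1)^{\ell_1+\ell_2+\ell_3}$, while the range of $\Pi_{\ell_4}$ consists of functions of parity $(-1)^{\ell_4}$, so $\Pi_{\ell_4}[\,\cdot\,]=0$ whenever $\ell_1+\ell_2+\ell_3\not\equiv \ell_4 \pmod 2$, killing the odd-$M$ contributions that survived the time integral. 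Only $M=0$ remains, and the same coefficient $(2\pi)^n/4$ emerges, producing the stated formula.

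The main obstacle is precisely the elimination of the odd-$M$ terms: naively the time integral $\int_{-\pi/4}^{\pi/4}e^{-2itM}\,dt$ does not vanish there, and it is only the pairing with the Hermite parity cancellation that yields the clean resonance condition $\ell_1+\ell_3=\ell_2+\ell_4$ rather than a weaker even-parity constraint. Everything else reduces to bookkeeping of the phases and the prefactor computation.
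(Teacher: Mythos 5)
Your argument is correct and is essentially the paper's own proof: expand in Hermite eigenspaces via the lens-transformed identity \eqref{E:lm2.2-GHT1-eq2}, observe that the time integral $\int_{-\pi/4}^{\pi/4}e^{-2iMt}\,dt$ kills nonzero even $M$ and equals $\pi/2$ at $M=0$, and use the parity $(-1)^{\ell}$ of the $\ell$-th eigenspace to eliminate the odd-$M$ terms, yielding the constant $(2\pi)^{n-1}\cdot\tfrac{\pi}{2}=(2\pi)^n/4$. Your explicit treatment of the second identity via \eqref{E:lm2.2-GHT1-eq4} and the parity of the range of $\Pi_{\ell_4}$ is a slightly more detailed version of what the paper leaves implicit, but it is the same mechanism.
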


\begin{proof}
We compute $\mathcal{E}_j$ using the expression \eqref{E:lm2.2-GHT1-eq2} for the eigenfunctions of $H$. Therefore we assume that $\Pi_{\ell_1} f = f $, $\Pi_{\ell_2} g = g $, $\Pi_{\ell_3} h = h $, $\Pi_{\ell_4} p = p $. Then
\begin{equation}  
\begin{split}
\mathcal{E} (f, g, h, p) 
& = (2 \pi)^{n-1} \sum_{\substack{\ell_1 , \ell_2 , \ell_3 , \ell_4 \in \mathbb{R}_+ \\ \ell_1 + \ell_3 = \ell_2 + \ell_4}}   \int^{\pi/4}_{-\pi/4}  e^{-2i (\ell_1 - \ell_2 + \ell_3 -\ell_4 )t} dt  \\
& \quad \cdot \int_{\mathbb{R}^n}  \big[ \sum_{l=1}^n   \check{f}_l \overline{ \check{g}_l} \big]  \big[ \sum_{m =1}^n \check{h}_m \overline{  \check{p}_m )}  \big]  (t, x)dx  .  \\
\end{split} 
\end{equation}
Now we use that $\Pi_{\ell_1} f(-x) = (-1)^{\ell_1} f(x)$, etc., thus $\int_{\mathbb{R}^n}      \check{f}_l \overline{ \check{g}_l}    \check{h}_m \overline{   ( \Delta^{-1} \partial_{x_j}   \partial_{x_m} \check{p}_j )} (t, x) dx =0$ unless $\ell_1 - \ell_2 + \ell_3 -\ell_4 $ is even. Notice that $\ell_1 - \ell_2 + \ell_3 -\ell_4 $ being even is equivalent to $\int^{\pi/4}_{-\pi/4}  e^{-2i (\ell_1 - \ell_2 + \ell_3 -\ell_4 )t} dt = \frac{\pi}{2} \delta (\ell_1 - \ell_2 + \ell_3 -\ell_4)$. \eqref{E:lm2.3-GHT1-eq1} then follows.
\end{proof}

\subsection{Symmetries of $\mathcal{T}$ and Conservation Laws for \eqref{CR}}
\label{S:SymConservation} 
 
We observe
 
%

\begin{lemma}   \label{L:symmetry}
The following symmetries leave the Hamiltonian $\mathcal{E}$ invariant: \\
1) Rotation: $g   \mapsto e^{i \theta_0} g $ for $\theta_0  \in \mathbb{R}$; \\
2) Modulation: $g \mapsto e^{i K \cdot x_0} g$ for all $x_0 \in \mathbb{R}^n$; \\
3) Quadratic modulation: $g \mapsto e^{i \tau |K|^2} g$ for all $\tau \in \mathbb{R}$; \\
4) Rotation: $g \mapsto O^T g(O \cdot ) $ for all any $O$ in the orthogonal group $O(n)$; \\
5) Scaling: $g \mapsto \lambda^{\frac{3n-2}{4}} g (\lambda \cdot ) $ for all $\lambda \in \mathbb{R} \setminus \{ 0 \}$. \\
\end{lemma}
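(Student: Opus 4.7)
The plan is to verify directly that the quartic form $\mathcal{E}(f,g,h,p)$ defined in \eqref{D:E-VNLS} is invariant under the simultaneous transformation of each of its four arguments by any map in (1)--(5); specializing to $f=g=h=p$ then gives invariance of $\mathcal{E}(g)=\mathcal{E}(g,g,g,g)$. The entire argument is most transparent directly from the Fourier-side formula \eqref{D:E-VNLS}, using two facts: two of the four scalar factors in the integrand are complex-conjugated (so any multiplicative phase enters with signs $+,-,+,-$), and the integrand is supported on $\{S_3(K)=0,\ \Omega_3(K)=0\}$.

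For (1), the four factors contribute phases $e^{i\theta_0},e^{-i\theta_0},e^{i\theta_0},e^{-i\theta_0}$, which cancel pointwise. For (2), the integrand acquires the overall factor $\exp(2\pi i(K_1-K_2+K_3-K)\cdot x_0)=\exp(2\pi i\,S_3(K)\cdot x_0)$, which equals $1$ on $\supp\delta_{\mathbb{R}^n}(S_3(K))$. For (3) the same accounting produces $\exp(i\tau(|K_1|^2-|K_2|^2+|K_3|^2-|K|^2))=\exp(i\tau\Omega_3(K))$, which equals $1$ on $\supp\delta_{\mathbb{R}}(\Omega_3(K))$.

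For (4), I would change variables $K_j\mapsto OK_j$ in all four integration variables. The Lebesgue measure is preserved, $S_3$ transforms covariantly so that $\delta_{\mathbb{R}^n}(S_3)$ is preserved, and $\Omega_3$ is strictly $O(n)$-invariant because it depends only on the norms $|K_j|$. The two inner products $\sum_l f_l(K_1)\overline{g_l(K_2)}$ and $\sum_m h_m(K_3)\overline{p_m(K)}$ are invariant under replacing each vector argument by $O^T$ times itself, since $(O^T a)\cdot\overline{(O^T b)}=a\cdot\overline{b}$ for orthogonal $O$, which is precisely what the transformation $g\mapsto O^T g(O\,\cdot)$ amounts to after the change of variables.

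For (5), I would carry out the substitution $K_j'=\lambda K_j$ in the four integration variables and track the powers of $\lambda$: the four field factors contribute $\lambda^{3n-2}$, the Jacobian for four $n$-dimensional integrations contributes $\lambda^{-4n}$, the identity $\delta_{\mathbb{R}^n}(\lambda^{-1}S_3(K'))=\lambda^n\delta_{\mathbb{R}^n}(S_3(K'))$ contributes $\lambda^n$, and $\delta_{\mathbb{R}}(\lambda^{-2}\Omega_3(K'))=\lambda^2\delta_{\mathbb{R}}(\Omega_3(K'))$ contributes $\lambda^2$. The exponents sum to $(3n-2)-4n+n+2=0$, which is exactly the bookkeeping that selects the weight $(3n-2)/4$. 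This is the only point in the verification where the specific exponent enters, and is therefore the one step that is not a one-line check from \eqref{D:E-VNLS}.
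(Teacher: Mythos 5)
Your verification is correct and is precisely the ``by inspection'' computation that the paper's one-line proof alludes to: phase cancellation from the $+,-,+,-$ conjugation pattern for (1)--(3) using the supports of $\delta_{\mathbb{R}^n}(S_3)$ and $\delta_{\mathbb{R}}(\Omega_3)$, orthogonal change of variables for (4), and the exponent bookkeeping $(3n-2)-4n+n+2=0$ for (5). The only content of the paper's proof that you omit is the observation that each of these maps also preserves the irrotationality constraint $g(K)=KG(K)$ (e.g.\ for (4), $O^{T}g(OK)=K\,G(OK)$, which is why the rotation acts as $O^{T}g(O\,\cdot)$ rather than $g(O\,\cdot)$); this is immaterial for the bare invariance of $\mathcal{E}$ but is needed for these to be symmetries of the constrained flow and hence for the Noether argument in the following lemma.
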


\begin{proof}
These properties can be found by inspection. In particular, we notice that under these symmetries, the irrotationality of $\check{g}$ is kept unchanged. That is, under these symmetries, the property that $\frac{K(j)}{|K|^2} K \cdot g(K) =g(K)$, $g (K) = K G (K)$ for some scalar function $ G (K) $ always holds.
\end{proof}

Hence by Noether’s theorem, we have the following conserved quantities associated to the symmetries 1) -- 4), respectively:

\begin{lemma} \label{L:conservation-law}
The following quantities are conserved by the flow of (CR):  \\
1) Rotation: 
\begin{equation*}
\begin{split}
& M = \sum_{j=1}^n  \int |g_j|^2 dK = \int |g|^2 dK  .  \\ 
\end{split}
\end{equation*}
2) Modulation: 
\begin{equation*}
\begin{split}
& \int K (j) |g|^2 dK , \  j = 1, 2, \cdots , n .  \\ 
\end{split}
\end{equation*}
3) Quadratic modulation: 
\begin{equation*}
\begin{split}
& \int |K|^2 |g|^2 dK . \\
\end{split}
\end{equation*}
4) Rotation: 
\begin{equation*}
\begin{split}
& \Im  \int ( K(j) \partial_{K(k)} - K(k) \partial_{K(j)} ) g (K) \cdot \overline{g (K)} dK , \ j \neq k ,  \ j , k = 1, \cdots , n .  \\
\end{split}
\end{equation*}
\end{lemma}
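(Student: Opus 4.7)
The plan is to establish each identity by directly differentiating the candidate conserved quantity along solutions of \eqref{CR} and exhibiting the cancellation; this is the explicit Noether calculation associated to the corresponding symmetry from Lemma~\ref{L:symmetry}. The main computational tool is the integral representation of $\mathcal{T}$ in Lemma~\ref{L:lm2.2-GHT1}, which rewrites $\mathcal{T}_j(g,g,g)$ in terms of the free Schr\"odinger evolution of $\check{g}$ on $\mathbb{R}^n$. Combined with Plancherel, this converts every computation into the manipulation of a spacetime integral on $\mathbb{R}^{n+1}$, where the relevant symmetry is manifest.

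For 1), I would differentiate $M$ to obtain $\frac{dM}{dt} = -2\,\Im\int \mathcal{T}(g,g,g)\cdot\overline{g}\,dK$. Substituting \eqref{E:lm2.2-GHT1-eq3} and applying Plancherel in the spatial variable, the integrand becomes, up to the projector, a positive multiple of
\[
\int_{\mathbb{R}}\int_{\mathbb{R}^n}\Big(\sum_l |e^{it\Delta}\check g_l(x)|^2\Big)^{2} dx\,dt,
\]
which is manifestly real, so its imaginary part vanishes. The irrotationality constraint $g = K G$ makes the projector $\frac{K(j)}{|K|^2}K\cdot$ act as the identity on $g$ in every slot, matching the resulting expression with $\tfrac14\mathcal{E}(g)$.

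For 2) and 3), the same strategy applies after inserting $K(j)$, respectively $|K|^2$, into the pairing. Under the Fourier transform the factor $K(j)$ becomes $-i\partial_{x_j}$ acting on the physical-space profile $e^{it\Delta}\check g$, and integration by parts on the resulting quartic expression reduces it to a real quantity, so its imaginary part vanishes. The factor $|K|^2$ similarly corresponds to $-\Delta$, and together with the dispersion identity $\partial_t e^{it\Delta} = i\Delta e^{it\Delta}$ the spacetime integrand can be recast as a total $t$-derivative of a bounded real quantity, which integrates to zero. For 4), I would linearize $g \mapsto O^{T} g(O\,\cdot)$ at $O = I + sA$ with $A$ antisymmetric to identify the infinitesimal generator; pairing with $\overline{g}$ and using the rotational invariance of $e^{it\Delta}$ together with the same Plancherel argument delivers the claimed angular-momentum conservation.

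The main obstacle will be the bookkeeping of the irrotationality constraint $g = K G$ together with the vector-valued projector $\frac{K(j)}{|K|^2}K\cdot$ in \eqref{CR}. The flow preserves the constraint by construction, but one must check that every integration by parts and every appeal to symmetry respects the vector structure of the equation; in item 4) one should in particular account for both the orbital term arising from $g(O\,\cdot)$ and the vector-transposition term arising from $O^{T}$, and verify that the latter contributes a spatial divergence that vanishes after integration. Once these checks are in place, each conservation law follows from a short computation.
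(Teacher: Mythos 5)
Your plan is correct and matches the paper's: the paper derives these laws precisely as the Noether quantities attached to the symmetries of Lemma~\ref{L:symmetry}, using the Hamiltonian structure of Section~\ref{S:HamilStructure} and the fact that irrotationality makes the projector $\frac{K(j)}{|K|^2}K\cdot$ act as the identity on $g$, so your direct time-differentiation combined with the representations of Lemma~\ref{L:lm2.2-GHT1} (equivalently, symmetrization of the resonant integrand using $S_3=0$ and $\Omega_3=0$) is just the explicit version of that argument. One small correction for item 4): the contribution of the \emph{spin} term $-Ag$ coming from $O^{T}$ is not a spatial divergence --- it drops out because the constraint $g=KG$ forces $g_j\overline{g_k}=K(j)K(k)|G|^2$ to be pointwise real, so only the orbital term $(AK\cdot\nabla_K)g$ survives, which is exactly the generator the paper records.
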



\begin{proof}
The conservation laws can be obtained by observing the Lie groups associated with the symmetries in Lemma \ref{L:symmetry}. In particular, we notice that $\check{g}$ is curl free, $\frac{K(j)}{|K|^2} K \cdot g(K) =g_j (K)$.



For 4) in Lemma \ref{L:symmetry}, the associated conserved quantities are given by
\begin{equation*}
\frac{1}{2}  \Im \sum_{j=1}^n \int ( \nabla_K g_j (K) \cdot A K ) \overline{g_j (K)}  dK  ,
\end{equation*}
where $A$ is any $n \times n$ skew-orthogonal matrix (i.e. $A^\top = -A$). Taking $A=  e_{jk}^n - e_{kj}^n$ gives the conserved quantity $\Im  \int ( K(j) \partial_{K(k)} - K(k) \partial_{K(j)} ) g (K) \cdot \overline{g (K)} dK$. 

\end{proof}

\subsection{Well-Posedness for \eqref{CR}}
\label{S:LWP}

We define
\begin{equation}
\begin{split}
& \|f \|_{L^{p, l}  } :=  \| \la K \ra^l f (K) \|_{L^p  }  , \  L^{p, l}_n := (L^{p, l})^n  ,  \\
& \|f \|_{\dot{L}^{p, l}  } :=  \| | K |^l f (K) \|_{L^p  }  , \  \dot{L}^{p, l}_n := (\dot{L}^{p, l})^n  ,  \\
& \|f \|_{W^{p, l}  } :=  \| \la D \ra^l f (K) \|_{L^p  }  , \  W^{p, l}_n := (W^{p, l})^n , \\
\end{split}
\end{equation}
and recall
\begin{equation}
\begin{split}
& \|f \|_{X^{l, N}  } := \sum_{0 \leq |\alpha| \leq N} \| \nabla^\alpha  f \|_{X^l  }  = \sum_{0 \leq |\alpha| \leq N} \| \nabla^\alpha  f \|_{L^{\infty, l} }     ,   \   X^{l, N}_n  := (X^{l, N}  )^n  . \\
\end{split}
\end{equation}

We have the following proposition for the boundedness property of the operator $\mathcal{T} = (\mathcal{T}_j )_{j=1, 2, \cdots , n} $,
\begin{equation}  \label{def-T-VNLS}
\mathcal{T}_j (g, g, g) (K) := \frac{K(j)}{|K|^2} \sum_{m=1}^n K(m) \int_{\mathbb{R}^{3n}} \big(\sum_{l=1}^n g_l (K_1) \overline{g}_l (K_2) \big) g_m (K_3) \delta_{\mathbb{R}^n} (S_3 (K)) \delta_{\mathbb{R}} (\Omega_3 (K)) dK_1 dK_2 dK_3  .
\end{equation} 

\begin{proposition} \label{L:BGHS2-prop5-VNLS}
The trilinear operator $\mathcal{T}$ is bounded from $X \times X \times X$ to $X$ for the following Banach spaces $X$:  \\
1) $X= \dot{L}^{p, \frac{n-2}{2}}_n $;  \\
2) $X= L^{p, l}_n $, $l \geq \frac{n-2}{2}$;  \\
3) $X= L^{\infty, l}_n $, $l  >  n-1$;  \\
4) $X= L^{p, l}_n $, $p \geq 2$, $l  >  n-1- \frac{n}{p}$;  \\
5) $X= X^{l, N}_n $, $l  >  n-1$, $N \geq 0$.  \\
\end{proposition}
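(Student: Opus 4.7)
The plan is to reduce the vector estimate to the corresponding scalar estimate for the cubic continuous-resonant operator studied in \cite{BGHS2}, and then invoke the latter. The key observation is that the Riesz-type multiplier $\frac{K(j)K(m)}{|K|^2}$ is pointwise bounded by $1$ for every $K\neq 0$ (by Cauchy--Schwarz), so componentwise
$$\bigl| \mathcal{T}_j(g,g,g)(K) \bigr| \leq \sum_{l,m=1}^n \bigl| T(g_l, g_l, g_m)(K)\bigr|,$$
where
$$T(f_1,f_2,f_3)(K) := \int_{\mathbb{R}^{3n}} f_1(K_1)\overline{f_2(K_2)}f_3(K_3)\,\delta(S_3(K))\,\delta(\Omega_3(K))\,dK_1 dK_2 dK_3$$
is the scalar cubic CR operator. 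Consequently, proving each of the vector bounds (1)--(5) reduces to proving the corresponding scalar bound and summing $n^2$ copies.

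For the scalar bound, I would cite Proposition 5 of \cite{BGHS2}, whose strategy I briefly recall: by the scalar analogue of Lemma~\ref{L:lm2.2-GHT1},
$$T(f_1,f_2,f_3)(K) = (2\pi)^{n-1}\,\mathcal{F}\!\left\{\int_{\mathbb{R}} e^{-it\Delta}\!\left[(e^{it\Delta}\check f_1)\overline{(e^{it\Delta}\check f_2)}(e^{it\Delta}\check f_3)\right] dt\right\}(K).$$
Case~(1), the scaling-critical space $\dot{L}^{p,(n-2)/2}_n$, would follow from Plancherel and the dual Strichartz estimate applied at the sharp admissible exponent for cubic NLS. Cases~(2)--(4) would then follow by interpolating Case~(1) against a trivial pointwise estimate on the resonant integral (the continuous analogue of Proposition~\ref{Th4-BGHS1}), which is available whenever $l>n-1$. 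Case~(5) would be proved by passing $\nabla^\alpha_K$ through the operator via the Hermite representation in Lemma~\ref{L:lm2.3-GHT1}: derivatives in $K$ correspond to multiplication by $x$ in physical space, and $x$ shifts each eigenspace of $H=-\Delta+|x|^2$ to its neighbours, so the derivative distributes cleanly across the projectors $\Pi_\ell$, reducing the $X^{l,N}$ bound to repeated applications of the $L^\infty$ bound from Case~(3).

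The main obstacle specific to the vector setting is the non-smoothness of the Riesz factor $\frac{K(j)K(m)}{|K|^2}$ at the origin, which would a priori produce singular contributions when differentiating in Case~(5). This is handled by exploiting the curl-free constraint $g(K)=K\,G(K)$: writing
$$\sum_{l=1}^n g_l(K_1)\overline{g_l(K_2)} = (K_1\cdot K_2)\,G(K_1)\overline{G(K_2)}, \qquad \sum_{m=1}^n K(m)\,g_m(K_3) = (K\cdot K_3)\,G(K_3),$$
and using the resonance identity $K = K_1-K_2+K_3$ to expand $K\cdot K_3$, one obtains an extra factor that compensates the $|K|^{-2}$ singularity and reduces $\mathcal{T}_j$ to a smoother quantity in the scalar $G$. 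With this reduction, the remaining derivative estimates reduce to the scalar analysis, and the proposition follows.
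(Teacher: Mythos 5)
Your proposal takes essentially the same route as the paper, which simply observes that $\bigl|K(j)K(m)\bigr|/|K|^2 \le 1$ and reduces each component $\mathcal{T}_j$ to the scalar cubic operator of Proposition~5 in \cite{BGHS2}. Your additional discussion of the singularity of the Riesz factor at $K=0$ in case~(5) flags a genuine subtlety that the paper's one-line proof does not address, and the curl-free reduction $g(K)=K\,G(K)$ you propose is a reasonable (if not fully worked out) way to handle it.
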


\begin{remark}
The borderline spaces for well-posedness above are $ \dot{L}^{p, n-1-\frac{n}{p}}_n$. They share the same scaling, and are also scale-invariant for the cubic NLS in $n$-dimension \eqref{NLS} (when viewed as spaces for $\hat{u}$).
\end{remark}

\begin{proof}
The proof is very similar to the one for Proposition 5 in \cite{BGHS2} so we omit it. Indeed, we can perform an argument which is essentially the same as in Proposition 5 in \cite{BGHS2} for each $\mathcal{T}_j$. Notice that $ \frac{K(j)  K(m) }{|K|^2}  \leq 1 $.
\end{proof}

From Proposition \ref{L:BGHS2-prop5-VNLS}, we conclude the following local well-posedness results:

\begin{theorem}
1) For $n \geq 3$, $X$ any spaces given in Proposition \ref{L:BGHS2-prop5-VNLS}, the equation \eqref{CR-NLS} is locally well-posed in $X$;  \\
2) For $n=2$, $X$ any spaces given in Proposition \ref{L:BGHS2-prop5-VNLS}, the equation $-i\partial_t g(t,\xi)= \mathcal{T} (g(t, \cdot) , g(t, \cdot) , g(t, \cdot)) (t, \xi) $ with $\mathcal{T}$ in \eqref{def-T-VNLS} is locally well-posed in $X$.  \\
\end{theorem}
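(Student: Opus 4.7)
The plan is to prove local well-posedness by the standard Picard contraction argument applied to the Duhamel form of \eqref{CR}, using the trilinear boundedness $\mathcal{T}:X\times X\times X\to X$ from Proposition \ref{L:BGHS2-prop5-VNLS} as the sole analytic input. In both parts the equation to be solved has the form $-i\partial_t g=\mathcal{T}(g,g,g)$ (in part (2) the $\zeta(2)/\log L$ correction is absent by hypothesis), so one argument handles both cases. First I would recast the equation in integral form, $g(t)=g_0+i\int_0^t\mathcal{T}(g(s),g(s),g(s))\,ds$, and on the Banach space $Y_T:=C([0,T];X)$ define
\[
\Phi(g)(t):=g_0+i\int_0^t\mathcal{T}(g(s),g(s),g(s))\,ds.
\]

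The key analytic step is to polarize the bound of Proposition \ref{L:BGHS2-prop5-VNLS} to obtain $\|\mathcal{T}(f_1,f_2,f_3)\|_X\lesssim\|f_1\|_X\|f_2\|_X\|f_3\|_X$; this is harmless since $\mathcal{T}$ is $\mathbb{C}$-linear in its first and third arguments and conjugate-linear in its second, and the proof of Proposition~5 of \cite{BGHS2} already delivers the inequality in this trilinear form. From the identity
\[
\mathcal{T}(g,g,g)-\mathcal{T}(h,h,h)=\mathcal{T}(g-h,g,g)+\mathcal{T}(h,g-h,g)+\mathcal{T}(h,h,g-h),
\]
one then derives at once
\[
\|\Phi(g)\|_{Y_T}\leq\|g_0\|_X+CT\|g\|_{Y_T}^3,\qquad \|\Phi(g)-\Phi(h)\|_{Y_T}\leq CT\bigl(\|g\|_{Y_T}^2+\|g\|_{Y_T}\|h\|_{Y_T}+\|h\|_{Y_T}^2\bigr)\|g-h\|_{Y_T}.
\]
Choosing $R=2\|g_0\|_X$ and $T$ so small that $3CTR^2\leq\tfrac12$, the map $\Phi$ sends the closed ball of radius $R$ in $Y_T$ into itself and is a strict contraction there. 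The Banach fixed point theorem then produces the unique solution $g\in Y_T$. Uniqueness in the whole class $Y_T$ and continuous dependence on $g_0$ follow from the same difference bound combined with Gronwall's inequality.

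Two points deserve verification but are not real obstacles. First, the irrotational constraint $g(K)=KG(K)$ must be preserved by the flow; this is automatic because $\mathcal{T}_j(g,g,g)(K)$ carries the prefactor $K(j)/|K|^2$, so $\mathcal{T}(g,g,g)(K)=K\,\widetilde G(K)$ for a scalar $\widetilde G$, and integrating against $g_0(K)=KG_0(K)$ keeps $g(t,K)$ in the constrained subspace, which is closed in $X$ and hence inherits the fixed point. Second, no dimension-dependent modification is needed between parts (1) and (2) because Proposition \ref{L:BGHS2-prop5-VNLS} supplies the same trilinear estimate for $n\geq 2$. Since Proposition \ref{L:BGHS2-prop5-VNLS} has already absorbed all the hard multilinear analysis, I anticipate no genuine obstacle — the remainder is a routine transcription of the standard Picard scheme for cubic dispersive equations, augmented by the constraint-preservation check above.
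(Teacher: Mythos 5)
Your proposal is correct and is exactly the argument the paper intends: the paper states the theorem as an immediate consequence of the trilinear boundedness in Proposition \ref{L:BGHS2-prop5-VNLS}, the point being that $\mathcal{T}:X\times X\times X\to X$ makes the equation an ODE in the Banach space $X$ with a locally Lipschitz cubic nonlinearity, so Picard iteration on the Duhamel formula applies. Your additional check that the constraint $g(K)=KG(K)$ defines a closed, flow-invariant subspace (thanks to the prefactor $K(j)/|K|^2\sum_m K(m)$) is a worthwhile detail the paper leaves implicit.
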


\begin{remark}
For the well-posedness of equation \eqref{CR-NLS-n=2} in the case $n=2$, we also need to consider the last term on the right hand side in \eqref{CR-NLS-n=2}, which, for the time being, lack of understanding and sufficient estimate for local well-posedness.
\end{remark}

\subsection{Stationary Waves of \eqref{CR}}
\label{S:StationaryWave}

In this section, we consider the existence of solutions of the CR equation \eqref{CR} of the vector NLS of the type
\begin{equation}
g(t, K) = e^{-i(\mu + \lambda |K|^2 + \nu \cdot K)t } \psi (K) ,
\end{equation}
where $\lambda$, $\mu \in \mathbb{R}$, $\nu \in \mathbb{R}^n$. For $g$ to solve \eqref{CR}, it suffices that $\psi$ solves
\begin{equation}
(\mu + \lambda |K|^2 + \nu \cdot K) \psi = \mathcal{T} (\psi, \psi, \psi ) 
\end{equation}
where $\mathcal{T}$ is as given in \eqref{def-T-VNLS}.
Notice that $g$ defined above oscillates in Fourier space, but it actually travels in physical space, as can be seen by taking its inverse Fourier transform:
\begin{equation}
\check{g} (t, x) = e^{-i(\mu - \lambda \Delta )t } \check{\psi} (x- \nu t) ,
\end{equation}
The conservation of position gives a restriction on the relation between $\nu$ and $\lambda$. Using the identity $[x, e^{it \Delta} ] = -2 it \nabla e^{it \Delta} $, we have
\begin{equation*}
\int_{\mathbb{R}^n} x |\check{g} (t, x) |^2 dx =  \int_{\mathbb{R}^n} x |\check{\psi} (x) |^2 dx + t ( \nu M (\check{\psi}) - 2 \lambda P(\check{\psi} ) ) 
\end{equation*}
where $M (\check{\psi}) = \int_{\mathbb{R}^n}  |\check{\psi}|^2 dx  $, $P (\check{\psi}) =  i \int_{\mathbb{R}^n} \nabla \check{\psi} \cdot \overline{ \check{\psi}}  dx  $. Notice that $\int_{\mathbb{R}^n} x |\check{g}|^2 dx $ is a conserved quantity, we must have 
$$ \nu = \frac{2 \lambda P(\check{\psi} ) }{M (\check{\psi})} . $$
By invariance of $\mathcal{T}$ under translations, we can denote
\begin{equation}
\phi (K) := \psi (K- \frac{\nu}{2 \lambda} )
\end{equation}
so $\phi (K) $ solves
\begin{equation}  \label{StationaryWave-eq1}
(\mu + \lambda |K|^2 ) \phi = \mathcal{T} (\phi, \phi, \phi ) .
\end{equation}

\begin{lemma} [Energy and Pohozaev identities] \label{L:BGHS2-SS4.1-lm1}
Assume that $\phi$ solves \eqref{StationaryWave-NLS-eq1}, and that $\mathcal{\phi}$ is finite. If furthermore $\phi \in L^{2, 1}$, then it satisfies the energy identity
\begin{equation}   \label{StationaryWave-eq2}
\lambda \| K \phi \|_{L^2}^2 + \mu \|  \phi \|_{L^2}^2 = \mathcal{H} (\phi) . 
\end{equation}
If furthermore $\phi \in L^2$, $\xi \nabla \phi \in L^2$, then it satisfies the Pohozaev identity
\begin{equation}   \label{StationaryWave-eq3}
\lambda (\frac{n}{2} -1) \| K \phi \|_{L^2}^2 + \mu \frac{n}{2} \|  \phi \|_{L^2}^2 = (\frac{1}{2} + \frac{n}{4}) \mathcal{H} (\phi) . 
\end{equation}
\end{lemma}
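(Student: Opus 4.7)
The plan is to derive both identities by pairing the stationary equation \eqref{StationaryWave-eq1} against two complementary multipliers, exploiting the Hamiltonian structure from Section \ref{S:HamilStructure} and the irrotationality constraint $\phi_j(K) = K(j) G(K)$. Throughout I identify $\mathcal{H}(\phi)$ with the Hamiltonian $\mathcal{E}(\phi)$ defined in \eqref{D:E-VNLS}.

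For the energy identity, I multiply $(\mu + \lambda|K|^2)\phi_j = \mathcal{T}_j(\phi,\phi,\phi)$ by $\bar\phi_j$, sum over $j$, and integrate in $K$. The left-hand side is manifestly $\mu\|\phi\|_{L^2}^2 + \lambda\|K\phi\|_{L^2}^2$. For the right-hand side I expand $\mathcal{T}_j$ using its definition; irrotationality gives $\sum_j \bar\phi_j(K) K(j)/|K|^2 = \bar G(K)$ and $K(m) \bar G(K) = \bar\phi_m(K)$, so the pointwise product collapses to $\sum_m \bar\phi_m(K) \int (\sum_l \phi_l(K_1)\bar\phi_l(K_2))\phi_m(K_3)\,\delta(S_3)\delta(\Omega_3)\,dK_1 dK_2 dK_3$, which integrates in $K$ to exactly $\mathcal{E}(\phi)$. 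The hypotheses $\phi \in L^{2,1}$ and finiteness of $\mathcal{E}(\phi)$ justify the use of Fubini.

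For the Pohozaev identity, I use scale invariance. Define $\phi_s(K) := s^a \phi(sK)$; since $\phi = K G$ gives $\phi_s(K) = s^{a+1}\, K\, G(sK)$, the rescaling preserves irrotationality for any $a \in \mathbb{R}$. A change of variables $K_i \mapsto K_i/s$ yields
\begin{equation*}
\|\phi_s\|_{L^2}^2 = s^{2a-n}\|\phi\|_{L^2}^2, \qquad \|K\phi_s\|_{L^2}^2 = s^{2a-n-2}\|K\phi\|_{L^2}^2,
\end{equation*}
and, noting that $\delta_{\mathbb{R}^n}(S_3)$ contributes a factor $s^n$ and $\delta_{\mathbb{R}}(\Omega_3)$ a factor $s^2$, $\mathcal{E}(\phi_s) = s^{4a-3n+2}\mathcal{E}(\phi)$. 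By the Hamiltonian identity $\mathcal{T}_j = \tfrac{1}{2}\tfrac{K(j)}{|K|^2}K\cdot\nabla_{\bar g}\mathcal{E}$ from Section \ref{S:HamilStructure}, equation \eqref{StationaryWave-eq1} is the Euler--Lagrange equation of
\begin{equation*}
\tilde F(\psi) := \mu\|\psi\|_{L^2}^2 + \lambda\|K\psi\|_{L^2}^2 - \tfrac{1}{2}\mathcal{E}(\psi)
\end{equation*}
restricted to irrotational $\psi$. Since $\phi_s$ stays in that subspace, criticality gives $\frac{d}{ds}\tilde F(\phi_s)|_{s=1}=0$, i.e.
\begin{equation*}
\mu(2a-n)\|\phi\|_{L^2}^2 + \lambda(2a-n-2)\|K\phi\|_{L^2}^2 = \tfrac{1}{2}(4a-3n+2)\mathcal{E}(\phi)
\end{equation*}
for every admissible $a$. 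Choosing $a = n/2$ (the mass-preserving scaling) kills the $\|\phi\|_{L^2}^2$ term and produces $\lambda\|K\phi\|_{L^2}^2 = \tfrac{n-2}{4}\mathcal{E}(\phi)$; combining with the energy identity yields $\mu\|\phi\|_{L^2}^2 = \tfrac{6-n}{4}\mathcal{E}(\phi)$, and a direct recombination shows $\mu\tfrac{n}{2}\|\phi\|^2 + \lambda(\tfrac{n}{2}-1)\|K\phi\|^2 = \tfrac{n+2}{4}\mathcal{E}(\phi) = (\tfrac{1}{2}+\tfrac{n}{4})\mathcal{H}(\phi)$.

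The main obstacle I anticipate is rigorously justifying that $\phi$ is a critical point of $\tilde F$ along irrotational variations -- equivalently, establishing the pairing identity $\sum_j \eta_j \mathcal{T}_j(\phi,\phi,\phi) = \tfrac{1}{2} \sum_j \eta_j \nabla_{\bar\phi_j}\mathcal{E}(\phi)$ for every irrotational $\eta$, where the factor $\tfrac{1}{2}$ reflects the projection $\tfrac{K(j)}{|K|^2} K \cdot$ onto curl-free fields. The regularity assumptions $\phi \in L^2$ and $K\nabla\phi \in L^2$ supply exactly the control needed to differentiate $\|K\phi_s\|_{L^2}^2$ and $\mathcal{E}(\phi_s)$ with respect to $s$ (the latter most conveniently via the space-time representation \eqref{E:lm2.2-GHT1-eq1}, from which the scaling law for $\mathcal{E}$ can also be re-derived via the unitarity of $e^{it\Delta}$ and the $L^4_{t,x}$ Strichartz structure).
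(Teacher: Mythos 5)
Your proof is correct and follows essentially the argument the paper itself defers to (the paper omits the proof, citing Lemma~1 of Subsection~4.1 in \cite{BGHS2}): the energy identity by pairing the stationary equation with $\bar\phi$, and the Pohozaev identity by the dilation/scaling multiplier, which is equivalent to testing against $K\cdot\nabla\bar\phi$. The one genuinely new point in the vector setting --- that the projection factor $\frac{K(j)K(m)}{|K|^2}$ in $\mathcal{T}_j$ collapses when paired against irrotational fields, and that the dilation generator $a\phi+K\cdot\nabla\phi$ stays irrotational so the constrained criticality of $\mu\|\psi\|_{L^2}^2+\lambda\|K\psi\|_{L^2}^2-\tfrac12\mathcal{E}(\psi)$ applies --- is exactly the right observation and you handle it correctly; your resulting identities also reproduce $\lambda\|K\phi\|^2=\tfrac{n-2}{4}\mathcal{H}(\phi)$ and $\mu\|\phi\|^2=\tfrac{6-n}{4}\mathcal{H}(\phi)$, consistent with Corollary~\ref{Cor:BGHS2-SS4.1-lm1-cor}.
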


\begin{proof}
The proof is similar to the one to Subsection 4.1, Lemma 1 in \cite{BGHS2} so we omit it. 
\end{proof}

Simple algebraic combinations of \eqref{StationaryWave-eq2} and \eqref{StationaryWave-eq3} yields:  

\begin{corollary}  \label{Cor:BGHS2-SS4.1-lm1-cor}
Assume that $\phi$ satisfies all the conditions in Lemma \ref{L:BGHS2-SS4.1-lm1}, then we have
\begin{equation}  \label{StationaryWave-eq4}
\lambda \| K \phi \|_{L^2}^2 = \frac{n-2}{4} \mathcal{H} (\phi) , \  \mu \|  \phi \|_{L^2}^2 = \frac{6-n}{4} \mathcal{H} (\phi) . 
\end{equation}
In particular, necessary conditions for \eqref{StationaryWave-eq1} to admit a solution are  \\
1) If $n=2$: $\lambda =0$ and $\mu>0$;  \\
2) If $3 \leq n \leq 5$: $\lambda >0$ and $\mu>0$;  \\
3) If $n=6$: $\lambda >0$ and $\mu=0$;  \\
4) If $n=7$: $\lambda >0$ and $\mu<0$.  \\
\end{corollary}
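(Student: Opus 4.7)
The plan is to treat \eqref{StationaryWave-eq2} and \eqref{StationaryWave-eq3} as a $2\times 2$ linear system in the unknowns $A := \lambda \|K\phi\|_{L^2}^2$ and $B := \mu \|\phi\|_{L^2}^2$, with right-hand sides $\mathcal{H}(\phi)$ and $(\tfrac{1}{2}+\tfrac{n}{4})\mathcal{H}(\phi)$ respectively. First I would solve this linear system explicitly: from \eqref{StationaryWave-eq2} one reads $B = \mathcal{H}(\phi)-A$, and substituting into \eqref{StationaryWave-eq3} gives $(\tfrac{n}{2}-1)A + \tfrac{n}{2}(\mathcal{H}(\phi)-A) = (\tfrac{1}{2}+\tfrac{n}{4})\mathcal{H}(\phi)$, so that $-A = (\tfrac{1}{2}-\tfrac{n}{4})\mathcal{H}(\phi)$, which yields $A = \tfrac{n-2}{4}\mathcal{H}(\phi)$ and then $B = \tfrac{6-n}{4}\mathcal{H}(\phi)$. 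This is exactly \eqref{StationaryWave-eq4}.

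For the second half of the statement I would first observe that $\mathcal{H}(\phi) > 0$ whenever $\phi\not\equiv 0$. This positivity follows directly from the spacetime representation \eqref{E:lm2.2-GHT1-eq2} in Lemma \ref{L:lm2.2-GHT1} applied with $f=g=h=p=\phi$: the integrand becomes $\bigl(\sum_l |e^{-itH}\check{\phi}_l|^2\bigr)^2$, which is pointwise nonnegative and not identically zero unless $\phi\equiv 0$. With this positivity in hand, each case is read off from \eqref{StationaryWave-eq4} by comparing signs. Since $\|K\phi\|_{L^2}^2\ge 0$ and $\|\phi\|_{L^2}^2>0$ for a nontrivial stationary wave, the sign of $\lambda$ must match that of $\tfrac{n-2}{4}$ and the sign of $\mu$ must match that of $\tfrac{6-n}{4}$, with equality forcing a vanishing factor.

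Concretely, for $n=2$ the coefficient $\tfrac{n-2}{4}$ vanishes, so $\lambda\|K\phi\|_{L^2}^2 = 0$; since a nontrivial $\phi\in L^{2,1}$ cannot have $K\phi\equiv 0$, this forces $\lambda = 0$, and $\mu\|\phi\|_{L^2}^2 = \mathcal{H}(\phi) > 0$ yields $\mu > 0$. For $3\le n\le 5$ both coefficients $\tfrac{n-2}{4}$ and $\tfrac{6-n}{4}$ are strictly positive, producing $\lambda,\mu>0$. For $n=6$ the second coefficient vanishes, giving $\mu=0$ while $\lambda>0$. For $n=7$ the second coefficient becomes negative, giving $\mu<0$ while $\lambda$ remains positive.

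The main (only) delicate point I anticipate is the positivity of $\mathcal{H}(\phi)$, because its proof relies on interpreting the Hamiltonian through the lens transform as in Lemma \ref{L:lm2.2-GHT1} rather than from the Fourier-side definition \eqref{D:E-VNLS}, which is not manifestly nonnegative. Everything else is elementary linear algebra on identities already available from Lemma \ref{L:BGHS2-SS4.1-lm1}. One small caveat worth flagging explicitly in the write-up is the need to rule out the degenerate possibility $K\phi\equiv 0$ (respectively $\phi\equiv 0$) when reading off strict inequalities in the borderline dimensions $n=2$ and $n=6$; this is immediate from $\phi\in L^{2,1}$ being nontrivial.
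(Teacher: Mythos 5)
Your proposal is correct and is exactly the "simple algebraic combination" the paper has in mind: solving the $2\times 2$ system from the energy and Pohozaev identities for $\lambda\|K\phi\|_{L^2}^2$ and $\mu\|\phi\|_{L^2}^2$, then reading off the signs using the positivity of the Hamiltonian, which indeed follows from the spacetime (lens-transform) representation in Lemma \ref{L:lm2.2-GHT1} rather than from the Fourier-side definition. The only point worth flagging is notational rather than mathematical: the corollary writes $\mathcal{H}(\phi)$ while the Hamiltonian is denoted $\mathcal{E}$ elsewhere in the section, and your positivity argument correctly identifies these.
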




Let us consider the variational problems  \\
1) $\displaystyle \sup_{\|g \|_{L^2}^2 =1} \mathcal{H} (g)$ if $n=2$;  \\
2) $\displaystyle \sup_{\|g \|_{L^2}^2 +\| K g \|_{L^2}^2 =1} \mathcal{H} (g)$ if $3 \leq n \leq 5$;  \\
3) $\displaystyle \sup_{\| K g \|_{L^2}^2 =1} \mathcal{H} (g)$ if $n=6$.  \\
These problems make sense due to the Strichartz estimates
\begin{equation}
\begin{split}
& \mathcal{H} (g) \lesssim  2 \pi \| e^{ i t \Delta } \check{g} \|_{L^4_{t, x}}^4 \lesssim \|g \|_{L^2}^4  \text{ if } n=2 ,  \\
& \mathcal{H} (g) \lesssim (2 \pi)^{n-1} \| e^{ i t \Delta } \check{g} \|_{L^4_{t, x}}^4 \lesssim \|g \|_{L^{2, 1}}^4  \text{ if } 3 \leq n \leq 5 ,  \\
& \mathcal{H} (g) \lesssim (2 \pi)^{5} \| e^{ i t \Delta } \check{g} \|_{L^4_{t, x}}^4 \lesssim \|g \|_{\dot{L}^{2,1}}^4  \text{ if } n=6 .  \\
\end{split}
\end{equation}
Therefore, these variational problems belong to the class which arises from Fourier restriction functionals. 

The Euler-Lagrange equations satisfied by the maximizers of these variational problems read   
\begin{equation}
\begin{split}
& \lambda g = \mathcal{T} (g, g, g) \text{ if } n=2 ,  \\
& \lambda [ g + |K|^2 g ]  = \mathcal{T} (g, g, g) \text{ if } 3 \leq n \leq 5 , \\
& \lambda |K|^2 g = \mathcal{T} (g, g, g) \text{ if } n=6 ,  \\
\end{split}
\end{equation}
where $\lambda$ is the Lagrange multiplier. These three equations should be understood as equations in $L^2$, $H^{-1}$ and $\dot{H}^{-1}$, respectively. Since the maximizers are nonzero, testing the above equations against $g$ gives $\lambda > 0$. We can take $\lambda=1$ by scaling, and the Euler-Lagrange equations become   
\begin{equation}  \label{StationaryWave-Varia-eq1}
\begin{split}
&  g = \mathcal{T} (g, g, g) \text{ if } n=2 ,  \\
&  g + |K|^2 g   = \mathcal{T} (g, g, g) \text{ if } 3 \leq n \leq 5 , \\
&  |K|^2 g = \mathcal{T} (g, g, g) \text{ if } n=6 .  \\
\end{split}
\end{equation}

\begin{theorem}  \label{StationaryWave-Varia-th1}
The following variational problems admit nonzero maximizers:  \\
1) $\displaystyle \sup_{\|g \|_{L^2}^2 =1} \mathcal{H} (g)$ if $n=2$;  \\
2) $\displaystyle \sup_{\|g \|_{L^2}^2 +\| K g \|_{L^2}^2 =1} \mathcal{H} (g)$ if $3 \leq n \leq 5$;  \\
3) $\displaystyle \sup_{\|K g \|_{L^2}^2 =1} \mathcal{H} (g)$ if $n=6$.  \\
Furthermore, maximizing sequences are compact modulo the symmetries of the equation. For the case $n=2$, maximizers are given by tensor products of Gaussians.
\end{theorem}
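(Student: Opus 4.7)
The plan is to follow the concentration-compactness strategy via profile decomposition for the linear Schr\"odinger flow, in direct analogy with the scalar CR analysis of \cite{BGHS2}, adapted to the vector-valued irrotational setting. The starting point is to rewrite the Hamiltonian, using Lemma \ref{L:lm2.2-GHT1}, as
\begin{equation*}
\mathcal{H}(g) = (2\pi)^{n-1} \int_{\mathbb{R}\times\mathbb{R}^n} |U(t,x)|^4\, dx\, dt, \qquad U(t,x) = e^{it\Delta} \check g(x), \quad |U|^2 := \sum_{l=1}^n |U_l|^2.
\end{equation*}
The three variational problems thus become extremal problems for the vector-valued Schr\"odinger Strichartz inequalities $L^2 \to L^4_{t,x}$ (mass-critical, $n=2$), $H^1 \to L^4_{t,x}$ (subcritical, $3 \le n \le 5$), and $\dot H^1 \to L^4_{t,x}$ (energy-critical, $n=6$), restricted to the irrotational cone $\{g : g(K)=K G(K)\}$.

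First I would fix a maximizing sequence $\{g^{(k)}\}$ in the admissible set, noting $\mathcal{H}_\ast := \sup \mathcal{H} < \infty$ by Strichartz and $\mathcal{H}_\ast > 0$ by testing on an explicit profile such as $g(K)= K e^{-|K|^2}$. Next, I would apply a Keraani/B\'egout--Vargas/Merle--Vega type linear profile decomposition, componentwise on $\check g^{(k)}$, to obtain
\begin{equation*}
\check g^{(k)}(x) = \sum_{j=1}^J \mathcal{G}_k^{(j)} \phi^{(j)}(x) + r_k^{(J)}(x), \qquad \lim_{J\to\infty} \limsup_{k\to\infty} \|e^{it\Delta} r_k^{(J)}\|_{L^4_{t,x}} = 0,
\end{equation*}
where the $\mathcal{G}_k^{(j)}$ are diagonal compositions of spatial translations, modulations, and (in the critical cases) scalings, with asymptotically orthogonal parameters. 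Since each such symmetry preserves irrotationality by Lemma \ref{L:symmetry}, the profiles $\phi^{(j)}$ may be taken to be curl-free.

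The decisive step is the almost-orthogonal expansion
\begin{equation*}
\mathcal{H}(g^{(k)}) = \sum_{j=1}^J \mathcal{H}(\phi^{(j)}) + o_k(1) + o_J(1), \qquad \|g^{(k)}\|_\ast^2 = \sum_{j=1}^J \|\phi^{(j)}\|_\ast^2 + o_k(1),
\end{equation*}
where $\|\cdot\|_\ast$ denotes the appropriate Hilbert constraint norm. Using the homogeneity of the constraint (degree two) versus $\mathcal{H}$ (degree four), setting $\alpha_j := \|\phi^{(j)}\|_\ast^2$ with $\sum_j \alpha_j \le 1$ gives the strict sub-additivity $\sum_j \mathcal{H}(\phi^{(j)}) \le \mathcal{H}_\ast \sum_j \alpha_j^2 \le \mathcal{H}_\ast$, with strict inequality unless exactly one $\alpha_j$ equals $1$. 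Since the left-hand side converges to $\mathcal{H}_\ast$, we must have a single nonzero profile $\phi^{(1)}$ and a vanishing remainder in the Strichartz norm; pulling back by $(\mathcal{G}_k^{(1)})^{-1}$ produces a strong limit that is the desired maximizer.

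The main obstacle is ensuring the profile decomposition cooperates with the irrotationality constraint and that the quartic expansion of $\mathcal{H}$ into profiles has vanishing cross-terms: the first is handled by Lemma \ref{L:symmetry}, the second by the now-classical observation that cross-terms are oscillatory integrals whose parameters diverge by orthogonality. For part (1), $n=2$, the extracted maximizer saturates the scalar $L^2\to L^4_{t,x}$ Strichartz inequality, componentwise after a Minkowski step, so by Foschi's theorem each component $\check\phi_l^{(1)}$ is Gaussian, with matching centers, widths and phases forced by the equality case in Minkowski; the curl-free constraint $\phi(K) = K G(K)$ then forces $G$ itself to be a Gaussian, yielding the asserted tensor-product-of-Gaussian structure.
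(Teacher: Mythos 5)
Your proposal is correct and follows essentially the same route as the paper: reformulating $\mathcal{H}$ as a vector-valued $L^4_{t,x}$ Strichartz functional via Lemma \ref{L:lm2.2-GHT1}, running a component-by-component linear profile decomposition with common parameters, and using orthogonality together with the quadratic-constraint-versus-quartic-functional scaling ($I(A)=A^2I(1)$, i.e.\ $1\le\sum_k\alpha_k^2$ while $\sum_k\alpha_k\le1$) to force a single nonzero profile, with the $n=2$ Gaussian characterization handled componentwise via the known sharp Strichartz maximizer results.
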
 

\begin{proof}


The case $n=2$ can be dealt with in a component-by-component way by refering to, for example, \cite{FGH1} and \cite{Shao1}. For the case $3 \leq n \leq 5$, we argue as in Theorem 1 Section 4.2 in \cite{BGHS2}. We take the Fourier transform of the above and consider the variational problem
\begin{equation}  \label{StationaryWave-Varia-th1-eq1}
\sup_{\|f \|_{L^2}^2 + \|\nabla f \|_{L^2}^2 = 1} \| e^{it \Delta} f \|_{L^4_{t, x}}^4 .
\end{equation}
For any $A > 0$, let $I(A)= \sup_{\|f \|_{L^2}^2 + \|\nabla f \|_{L^2}^2 = A} \| e^{it \Delta} f \|_{L^4_{t, x}}^4 $, then 
\begin{equation}   \label{StationaryWave-Varia-th1-eq2}
I(A) = A^2 I(1) .
\end{equation}
We make use of a profile expansion to exploit the compactness and find a maximizer. Consider $(f^q)_{q=1}^\infty$ ($f^q = (f^q_1, \cdots , f^q_n )$) a bounded sequence in $(H^1)^n$. There exists a subsequence (also denoted by $(f^q)_{q=1}^\infty$), and a second sequence $(\psi^k)_{k=1}^\infty$ ($\psi^k = (\psi^k_1, \cdots , \psi^k_n)$), and doubly indexed subsequences $(t_q^k)_{k, q =1}^\infty$, $(x_q^k)_{k, q =1}^\infty$ giving for any $k_0$ the decomposition
\begin{equation}
f^q = \sum_{k=1}^{k_0} e^{i t_q^k \Delta} \psi^k (x+ x_q^k) + r_q^{k_0} ,
\end{equation}
such that  \\
1) The expansion is orthogonal in the Strichartz norm:
\begin{equation}    \label{StationaryWave-Varia-th1-eq3}
\lim_{k_0 \rightarrow \infty} \limsup_{q \rightarrow \infty} \big( \| e^{it \Delta} f^q \|_{L^4_{t, x}}^4 -  \sum_{k=1}^{k_0} \| e^{it \Delta} \psi^k \|_{L^4_{t, x}}^4    \big)  = 0 . 
\end{equation}
2) The expansion is orthogonal in $L^2$: 
\begin{equation}    \label{StationaryWave-Varia-th1-eq4}
\text{For any } k_0 , \    \lim_{q \rightarrow \infty} \big( \|  f^q \|_{L^2_{x}}^2 -  \sum_{k=1}^{k_0} \|  \psi^k \|_{L^2_{x}}^2 -  \| r_q^{k_0} \|_{L^2_{x}}^2    \big)  = 0 . 
\end{equation}
3) The expansion is orthogonal in $\dot{H}^1$: 
\begin{equation}   \label{StationaryWave-Varia-th1-eq5}
\text{For any } k_0 , \    \lim_{q \rightarrow \infty} \big( \| \nabla  f^q \|_{L^2_{x}}^2 -  \sum_{k=1}^{k_0} \|  \nabla  \psi^k \|_{L^2_{x}}^2 -  \|  \nabla  r_q^{k_0} \|_{L^2_{x}}^2    \big)  = 0 . 
\end{equation}
The construction of the profile expansion can be carried out as follows: first consider $( f_1^q )_{q=1}^\infty$, performing profile expansion for this sequence gives a subsequence, also denoted by $( f_1^q )_{q=1}^\infty$ by re-indexing, as well as a second sequence $(\psi_1^k)_{k=1}^\infty$ and doubly indexed subsequences $(t_q^k)_{k, q =1}^\infty$, $(x_q^k)_{k, q =1}^\infty$. Then we take those elements in $( f_2^q )_{q=1}^\infty$ whose indices appear in the subsequence $( f_1^q )s_{q=1}^\infty$, and perform profile expansion, yielding a subsequence, also denoted by $( f_2^q )_{q=1}^\infty$ by re-indexing, as well as a second sequence $(\psi_2^k)_{k=1}^\infty$ and doubly indexed subsequences $(t_q^k)_{k, q =1}^\infty$, $(x_q^k)_{k, q =1}^\infty$ which are re-indexed subsequences of the $(t_q^k)_{k, q =1}^\infty$, $(x_q^k)_{k, q =1}^\infty$ determined in the previous step. Repeating this process $d$ times gives the profile decomposition for $(f^q)_{q=1}^\infty$, in which $\psi^k = (\psi^k_1, \cdots , \psi^k_n)$, and the indices come from the last step (on the $n$-th component).


Now, we pick $(f^q)_{q=1}^\infty$ a maximizing sequence for the variational problem \eqref{StationaryWave-Varia-th1-eq1} and perform the profile decomposition as described above. Then, due to the orthogonality property  \eqref{StationaryWave-Varia-th1-eq3} in the Strichartz norm,
\begin{equation}
I(1) = \lim_{q \rightarrow \infty} \| e^{it \Delta} f^q \|_{L^4_{t, x}}^4  =   \sum_{k=1}^{\infty} \| e^{it \Delta} \psi^k \|_{L^4_{t, x}}^4  .
\end{equation}
By the scaling property \eqref{StationaryWave-Varia-th1-eq2} of the variational problem, we have
\begin{equation}
I(1) \leq \sum_{k=1}^\infty I ( \| \psi^k \|_{L^2}^2 + \| \nabla \psi^k \|_{L^2}^2  ) \leq I(1)  \sum_{k=1}^\infty  ( \| \psi^k \|_{L^2}^2 + \| \nabla \psi^k \|_{L^2}^2  )^2 . 
\end{equation}
This in turn implies 
\begin{equation}
1 \leq  \sum_{k=1}^\infty  ( \| \psi^k \|_{L^2}^2 + \| \nabla \psi^k \|_{L^2}^2  )^2 
\end{equation}
while
\begin{equation}
\sum_{k=1}^\infty   \| \psi^k \|_{L^2}^2 + \| \nabla \psi^k \|_{L^2}^2  \leq  1  . 
\end{equation}
This is only possible if only one of the $\psi^k$'s is nonzero, say, $\psi^1$, without loss of generality. In other words, the maximizing sequence is compact (modulo symmetries), and $\psi^1$ is the desired maximizer.

\end{proof}

Moreover, by the same procedure as in \cite{BGHS2} we have the following decay properties for the solutions of the Euler-Lagrange equations:

\begin{proposition}
1) If $n = 2$, a solution $ g \in L^2$ of $g = \mathcal{T} (g,g,g) $ (where $\mathcal{T}$ is as given in \eqref{def-T-VNLS})
belongs to $L^{2,s}$ for all $s > 0$. \\
2) If $3 \leq n \leq 5$, a solution $ g \in L^{2,1}$ of $g+|K|^2 g=\mathcal{T}(g,g,g)$ (where $\mathcal{T}$ is as given in \eqref{def-T-VNLS}) belongs to $L^{2,s}$ for all $s>0$.
\end{proposition}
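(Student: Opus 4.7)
The plan is to mimic the bootstrap argument used for the scalar CR equation in \cite{BGHS2}, exploiting two structural features specific to the vector case: first, the irrotationality projector $\frac{K(j)K(m)}{|K|^2}$ appearing in $\mathcal{T}_j$ is a bounded Fourier multiplier (a composition of Riesz-type transforms), so no power of $\la K\ra$ is consumed by it; second, the Lemma \ref{L:lm2.2-GHT1} representation
\begin{equation*}
\mathcal{T}_j(g,g,g)(K)=(2\pi)^{n-1}\frac{K(j)}{|K|^2}\mathcal{F}\Big\{\int_\mathbb{R} \sum_{m=1}^n K(m)\, e^{-it\Delta}\bigl[(\textstyle\sum_l|u_l|^2)u_m\bigr](t,x)\,dt\Big\},\qquad u=e^{it\Delta}\check g,
\end{equation*}
reduces the analysis of weighted $L^2$ norms of $\mathcal{T}(g,g,g)$ to spacetime Strichartz estimates for the free Schr\"odinger propagator on $\mathbb{R}^n$. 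The argument is by induction on the decay exponent $s$: starting from the hypothesized $L^2$ (resp.\ $L^{2,1}$) membership I will show that if $g\in L^{2,s_0}_n$ then $g\in L^{2,s_0+\delta}_n$ for a fixed $\delta>0$ depending only on $n$, and iterate finitely many times to reach any prescribed $s$.

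The inductive step proceeds as follows. First I would test the equation against the weight $\la K\ra^{s_0+\delta}$. In case (1) one simply multiplies $g=\mathcal{T}(g,g,g)$ by $\la K\ra^{s_0+\delta}$; in case (2) one divides by $1+|K|^2$ first, which produces an extra factor $\la K\ra^{-2}$ on the right-hand side and is the precise reason a two-unit gain is available at each step. Next, since $\frac{K(j)K(m)}{|K|^2}$ is $L^2$-bounded, I can pull $\la K\ra^{s_0+\delta}$ past it and translate the resulting Fourier weight into a physical-side weight $\la x\ra^{s_0+\delta}$ acting on $e^{-it\Delta}[(\sum_l|u_l|^2)u_m]$. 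The standard commutator identity $[x,e^{-it\Delta}]=2it\nabla e^{-it\Delta}$ then transfers $\la x\ra^{s_0+\delta}$ onto the nonlinearity, producing terms of the form $\la x\ra^{s_0+\delta}|u|^2u$ plus lower-order derivative corrections of the same total homogeneity. These are estimated by Strichartz and H\"older as in the scalar case, giving a bound of the form
\begin{equation*}
\|\la K\ra^{s_0+\delta}\mathcal{T}(g,g,g)\|_{L^2_n}\lesssim \|g\|_{L^{2,s_0}_n}\|g\|_{L^{2,s_0+\delta}_n}^2+\|g\|_{L^{2,s_0+\delta}_n}\|g\|_{L^{2,s_0}_n}^2,
\end{equation*}
which, combined with a truncation-in-$s$ argument, closes the bootstrap and yields the desired a priori control.

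The main obstacle I anticipate is the presence of the low-frequency singularity of the multiplier $\frac{K(j)K(m)}{|K|^2}$: although it is $L^p$-bounded for $1<p<\infty$, powers of $\la K\ra$ do not commute cleanly with it, and translating $\la K\ra^{s}$ through it introduces error terms supported near $K=0$. I would handle this by a Littlewood--Paley decomposition, treating the high-frequency piece by the commutator argument above and the low-frequency piece directly via the scalar Strichartz estimate applied to $|u|^2u$, using that the constraint $g(K)=KG(K)$ forces $g$ to vanish at the origin, which in turn provides the small amount of extra low-frequency smoothness needed to absorb the singular multiplier.

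For case (2), an additional input is the elliptic gain $(1+|K|^2)^{-1}$ on the left-hand side, which gives the two-unit weighted improvement per iteration automatically and makes the induction start at $s_0=1$. The Gaussian rigidity statement in case (1) for $n=2$ is not needed for the decay conclusion itself, since the decay of all Strichartz extremizers is inherited from the bootstrap and the well-known fact (see \cite{Shao1,FGH1}) that maximizers of the $L^4_{t,x}$ Strichartz inequality in two dimensions are Gaussians, whose Fourier transform decays faster than any polynomial.
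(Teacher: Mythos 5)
Your outline for case (2) is sound and matches what the paper intends (the paper itself gives no argument beyond ``same procedure as in \cite{BGHS2}''): there the gain comes entirely from inverting $1+|K|^2$, and one only needs $\mathcal{T}$ to be \emph{bounded} on weighted $L^2$ spaces, which is item 2) of Proposition \ref{L:BGHS2-prop5-VNLS}; iterating $g=(1+|K|^2)^{-1}\mathcal{T}(g,g,g)$ then gains two units of decay per step. The problem is case (1). Your key trilinear estimate
\begin{equation*}
\|\la K\ra^{s_0+\delta}\mathcal{T}(g,g,g)\|_{L^2_n}\lesssim \|g\|_{L^{2,s_0}_n}\|g\|_{L^{2,s_0+\delta}_n}^2+\|g\|_{L^{2,s_0+\delta}_n}\|g\|_{L^{2,s_0}_n}^2
\end{equation*}
is circular: the unknown quantity $X=\|g\|_{L^{2,s_0+\delta}_n}$ appears on the right, once quadratically and once linearly with coefficient $\|g\|_{L^{2,s_0}_n}^2$. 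Even with truncated weights $w_R$ (so that $X_R<\infty$ a priori), the inequality $X_R\le C A X_R^2+CA^2X_R$ yields no uniform upper bound on $X_R$ unless $CA^2<1$; and no smallness is available, since testing $g=\mathcal{T}(g,g,g)$ against $g$ gives $\|g\|_{L^2}^2=\mathcal{E}(g)\lesssim\|g\|_{L^2}^4$, i.e.\ nontrivial solutions are bounded \emph{below} in $L^2$. So the inductive step for $n=2$ does not close as written. The missing idea is a genuine \emph{weight-gain} estimate for $\mathcal{T}$ itself, which is what \cite{FGH1} and \cite{BGHS2} prove and use: on the support of $\delta(S_3(K))\delta(\Omega_3(K))$ one has $|K|^2+|K_2|^2=|K_1|^2+|K_3|^2$, so the output weight can be placed entirely on $K_1$ or $K_3$, and in the two-dimensional rectangle geometry this upgrades to an estimate of the schematic form $\|\mathcal{T}(f,f,f)\|_{L^{2,s+1}}\lesssim\|f\|_{L^{2,s}}^3$, in which the top weight lands only on norms already controlled at the previous stage of the induction. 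That is what makes $g=\mathcal{T}(g,g,g)$, $g\in L^2$, bootstrap to all $s$.

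Two secondary points. First, your worry about commuting $\la K\ra^s$ past the multiplier $K(j)K(m)/|K|^2$ is a non-issue: all the norms involved are weighted $L^2$ norms in $K$, and on the Fourier side the projector is a pointwise bounded function of $K$, so $|\la K\ra^s \tfrac{K(j)K(m)}{|K|^2}F(K)|\le\la K\ra^s|F(K)|$ with nothing to commute; the Littlewood--Paley treatment of the origin is unnecessary. The difficulty you describe only arises because you move to the physical side via $[x,e^{it\Delta}]=-2it\nabla e^{it\Delta}$ --- and that route has its own problem, namely that the resulting powers of $t$ are not integrable over $t\in\mathbb{R}$ against the Strichartz bounds (in the compact-time Hermite representation \eqref{E:lm2.2-GHT1-eq4} the $t$-powers are harmless, but there $\la x\ra^s$ no longer commutes nicely with $e^{-itH}$). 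Staying on the Fourier side and distributing weights via the resonance identity avoids both issues. Second, your remark that Gaussian rigidity is not needed for the decay conclusion is correct.
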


\section{Analysis for \eqref{CR-NLS}}
\label{S:CR-NLS-analysis}

This section is devoted to some analysis on the equation \eqref{CR-NLS}. Most of the results are parallel to the ones in Section \ref{S:CR-analysis} unless otherwise stated, so we omit these proofs.

\subsection{Hamiltonian Structure for \eqref{CR-NLS}}
\label{S:HamilStructure-NLS}

Let $n \geq 3$, $\mathcal{T} = (\mathcal{T}_j )_{j=1, 2, \cdots , d} $, 
\begin{equation*}
\mathcal{T}_j (f, g, h) (K) =  \int_{\mathbb{R}^{3n}} \big(\sum_{l=1}^d f_l (K_1) \overline{g}_l (K_2) \big) h_j (K_3) \delta_{\mathbb{R}^n} (S_3 (K)) \delta_{\mathbb{R}} (\Omega_3 (K)) dK_1 dK_2 dK_3   . 
\end{equation*} 
Define
\begin{equation} \label{D:E-NLS}
\begin{split}
\mathcal{E} (f, g, h, p)  
& := \int_{\mathbb{R}^{4n}}    \big(\sum_{l=1}^d  f_l (K_1) \overline{g}_l (K_2) \big) \big( \sum_{m=1}^d   h_m (K_3) \overline{p}_m (K) \big)  \\
& \quad \cdot  \delta_{\mathbb{R}^n} (S_3 (K)) \delta_{\mathbb{R}} (\Omega_3 (K)) dK_1 dK_2 dK_3  dK . \\
\end{split}
\end{equation} 
then the (CR) equation \eqref{CR-NLS} can be derived from the Hamiltonian
\begin{equation}
\mathcal{E} (g) := \mathcal{E} (g, g, g,g) ,
\end{equation}
given the symplectic form $\omega (f, g) = - 4 \Im \la f, g \ra_{L^2 (\mathbb{R}^n)} = 4  \int (\Re f \Im g - \Im f \Re g) =  \la f, S g  \ra_{L^2 (\mathbb{R}^n)} $, with $S g : = 4 ( \Im g + i \Re g )$. In other words, \eqref{CR-NLS} can also be written as 
\begin{equation}
-i \partial_t g = \frac{1}{2} \nabla_{\overline{g}} \mathcal{E} (g) . 
\end{equation}


From now on, we use $\check{f}$ to denote the inverse Fourier transform of $f$, and denote the harmonic oscillator $H= - \Delta + |x|^2$, and $\Pi_n$ be the projector on the $n$-th eigenspace of $H$.

We have
\begin{lemma} \label{L:lm2.2-GHT1-NLS}
The quantity $\mathcal{E}$ satisfies
\begin{equation} \label{E:lm2.2-GHT1-NLS-eq1}
\mathcal{E} (f, g, h, p) = (2\pi)^{n-1}  \int_{\mathbb{R}} \int_{\mathbb{R}^n}  \big[ \sum_{l=1}^d e^{it \Delta} \check{f}_l (x) \overline{e^{it \Delta} \check{g}_l (x)} \big] \big[ \sum_{m =1}^d e^{it \Delta} \check{h}_m (x) \overline{e^{it \Delta}    \check{p}_m (x)} \big] dx dt . 
\end{equation}
\begin{equation} \label{E:lm2.2-GHT1-NLS-eq2}
\mathcal{E} (f, g, h, p) = (2 \pi)^{n-1}   \int^{\pi/4}_{-\pi/4} \int_{\mathbb{R}^n}  \big[ \sum_{l=1}^d e^{-itH} \check{f}_l \overline{e^{-itH} \check{g}_l} \big] \big[ \sum_{m =1}^d e^{-itH} \check{h}_m \overline{ e^{-itH} (  \check{p}_m )} \big] (t, x) dx dt  . 
\end{equation}
Consequently,  
\begin{equation} \label{E:lm2.2-GHT1-NLS-eq3}
\mathcal{T}_j (f, g, h) = (2\pi)^{n-1} \mathcal{F} \Big\{   \int_{\mathbb{R}}        e^{-it \Delta}   \big[   \big( \sum_{l=1}^d  e^{it \Delta} \check{f}_l (x) \overline{e^{it \Delta} \check{g}_l (x)} \big) e^{it \Delta} \check{h}_j (x) \big]    dt \Big\} . 
\end{equation}
\begin{equation} \label{E:lm2.2-GHT1-NLS-eq4}
\mathcal{T}_j (f, g, h) = (2 \pi)^{n-1} \mathcal{F}  \Big\{   \int^{\pi/4}_{-\pi/4}     e^{it H} \big[  \big( \sum_{l=1}^d e^{-itH} \check{f}_l \overline{e^{-itH} \check{g}_l} \big) e^{-itH} \check{h}_j  (t, x) \big]   dt  \Big\} . 
\end{equation}
\end{lemma}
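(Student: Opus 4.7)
The plan is to proceed exactly as in the proof of Lemma~\ref{L:lm2.2-GHT1}, making only the two bookkeeping changes imposed by the NLS setting: the outer summation index runs over the $d$ components of the target space rather than the $n$ spatial components, and the multiplier $\frac{K(j)}{|K|^2}\sum_m K(m)(\,\cdot\,)_m$ is absent, replaced simply by the $j$-th component $h_j$ in the definition of $\mathcal{T}_j$ and similarly $\overline{p}_j$ in $\mathcal{E}$.

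For the first identity \eqref{E:lm2.2-GHT1-NLS-eq1}, I would start from the definition \eqref{D:E-NLS} and represent the two delta functions by their Fourier expressions,
\begin{equation*}
\delta_{\mathbb{R}}(\Omega_3(K))=\frac{1}{2\pi}\int_{\mathbb{R}}e^{-it\Omega_3(K)}\,dt,\qquad \delta_{\mathbb{R}^n}(S_3(K))=\frac{1}{(2\pi)^n}\int_{\mathbb{R}^n}e^{ix\cdot S_3(K)}\,dx.
\end{equation*}
Substituting and interchanging the orders of integration (justified by the decay of the profiles $f,g,h,p$), the $K_j$ and $K$ integrals factor into four copies of $\int_{\mathbb{R}^n} f_l(K_1)e^{-it|K_1|^2+ix\cdot K_1}\,dK_1=e^{it\Delta}\check{f}_l(x)$ and its companions (with appropriate signs from the alternating $\Omega_3$ and $S_3$). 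Collecting the factors produces exactly \eqref{E:lm2.2-GHT1-NLS-eq1}. The companion formula \eqref{E:lm2.2-GHT1-NLS-eq2} then follows by the lens transform identity
\begin{equation*}
e^{it\Delta}\check{f}(x)=\frac{1}{\sqrt{1+4t^2}}\,e^{-itH}\check{f}\!\left(\tfrac{\arctan(2t)}{2},\tfrac{x}{\sqrt{1+4t^2}}\right)e^{\frac{i|x|^2t}{1+4t^2}},
\end{equation*}
applied to each of the four profiles, after the change of variables $y=x/\sqrt{1+4t^2}$, $\tau=\frac{1}{2}\arctan(2t)$, whose Jacobian cancels the four $(1+4t^2)^{-1/2}$ factors and whose quadratic phases cancel pairwise because of the conjugation pattern in the integrand.

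For the two identities \eqref{E:lm2.2-GHT1-NLS-eq3} and \eqref{E:lm2.2-GHT1-NLS-eq4} on $\mathcal{T}_j$, I would use the Hamiltonian structure already recorded in Section~\ref{S:HamilStructure-NLS}: since $-i\partial_t g=\tfrac12\nabla_{\overline{g}}\mathcal{E}(g)$ and the quadrilinear form $\mathcal{E}(f,g,h,p)$ is linear in each of $f,h,\overline{g},\overline{p}$, the functional derivative $\nabla_{\overline{p}_j}\mathcal{E}(f,g,h,p)$ recovers $\mathcal{T}_j(f,g,h)$ up to a factor (after symmetrization). Differentiating \eqref{E:lm2.2-GHT1-NLS-eq1} in $\overline{p}_j$ amounts to replacing $\overline{e^{it\Delta}\check{p}_j(x)}$ by a test function and reading off the conjugate: this pulls the factor $e^{-it\Delta}$ onto the bracket $\bigl(\sum_l e^{it\Delta}\check f_l\,\overline{e^{it\Delta}\check g_l}\bigr)\,e^{it\Delta}\check h_j$ and then takes the Fourier transform in $x$, which yields \eqref{E:lm2.2-GHT1-NLS-eq3}. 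Doing the same to \eqref{E:lm2.2-GHT1-NLS-eq2} gives \eqref{E:lm2.2-GHT1-NLS-eq4} with $H$ in place of $-\Delta$ and the restricted time interval $[-\pi/4,\pi/4]$.

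I do not anticipate any real obstacle, since all four identities are strict analogues of those in Lemma~\ref{L:lm2.2-GHT1}, and the only effect of removing the projector $\frac{K(j)}{|K|^2}K\cdot$ is that a single index $j$ survives where previously one had to contract against $K(j)K(m)/|K|^2$. The one point requiring minor care is the Fubini step on the time integral: convergence of $\int_{\mathbb{R}}\|\sum_l e^{it\Delta}\check f_l\,\overline{e^{it\Delta}\check g_l}\|_{L^2_x}^2\,dt$ has to be controlled by a Strichartz estimate (for $n\ge 3$ by the $L^4_{t,x}$ Strichartz inequality on $\mathbb{R}^n$, analogous to the estimates displayed just before the Euler--Lagrange equations \eqref{StationaryWave-Varia-eq1}), which justifies the interchange of orders of integration used in the computation above.
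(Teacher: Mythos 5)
Your proposal follows the same route as the paper, which omits the proof of this lemma precisely because it is the component-wise analogue of Lemma~\ref{L:lm2.2-GHT1}: Fourier representation of the two delta functions to factor the $K$-integrals into products of $e^{it\Delta}\check{f}_l$ etc., the lens transform with the change of variables $y=x/\sqrt{1+4t^2}$, $\tau=\tfrac12\arctan(2t)$ for the harmonic-oscillator version, and the identification $\mathcal{T}_j=\tfrac12\nabla_{\overline{p}_j}\mathcal{E}$ for the last two identities. The adaptation you describe (summation over $d$ components, no $\frac{K(j)}{|K|^2}\sum_m K(m)$ multiplier) is exactly what is needed, and your remark on justifying Fubini via Strichartz is a harmless addition.
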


\begin{lemma} \label{L:lm2.3-GHT1-NLS}
The quantity $\mathcal{E}$ satisfies
\begin{equation} \label{E:lm2.3-GHT1-NLS-eq1}
\begin{split}
& \mathcal{E} (f, g, h, p) =    \frac{(2\pi)^{n}}{4}  \sum_{\substack{\ell_1 , \ell_2 , \ell_3 , \ell_4 \in \mathbb{Z}_{\geq 0} \\ \ell_1 + \ell_3 = \ell_2 + \ell_4}}  \int_{\mathbb{R}^n}  \big[ \sum_{l=1}^d  \Pi_{\ell_1} \check{f}_l (x)  \overline{\Pi_{\ell_2}  \check{g}_l (x) } \big] \big[  \sum_{m =1}^d   \Pi_{\ell_3}  \check{h}_m (x) \overline{\Pi_{\ell_4}  \check{p}_m (x)} \big] dx  . 
\end{split}
\end{equation}
Hence
\begin{equation} \label{E:lm2.3-GHT1-NLS-eq2}
\begin{split}
& \mathcal{T}_j (f, g, h) =   \frac{(2\pi)^{n}}{4}  \sum_{\substack{\ell_1 , \ell_2 , \ell_3 , \ell_4 \in \mathbb{Z}_{\geq 0} \\ \ell_1 + \ell_3 = \ell_2 + \ell_4}}   \mathcal{F} \Big\{  \Pi_{\ell_4}   \Big[  \big[ \sum_{l=1}^d  \Pi_{\ell_1} \check{f}_l (x)  \overline{ \Pi_{\ell_2}  \check{g}_l (x) } \big] \Pi_{\ell_3}  \check{h}_j (x) \Big] \Big\}     . 
\end{split}
\end{equation}
\end{lemma}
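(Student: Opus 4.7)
The plan is to mirror the proof of Lemma \ref{L:lm2.3-GHT1}, replacing the vector-NLS inner-product structure (sums over $n$ components together with the projection factor $\frac{K(j)}{|K|^2}\sum_m K(m)$) by the straightforward NLS inner-product structure (sums over the $d$ components with no factor of $K$). The starting point is the lens-transform identity \eqref{E:lm2.2-GHT1-NLS-eq2}.

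First I would decompose each inverse Fourier transform spectrally: $\check{f}_l = \sum_{\ell_1\geq 0}\Pi_{\ell_1}\check{f}_l$, and similarly for $\check{g}_l$, $\check{h}_m$, $\check{p}_m$. Since $H\Pi_\ell = (2\ell+n)\Pi_\ell$, one has $e^{-itH}\Pi_\ell = e^{-i(2\ell+n)t}\Pi_\ell$, and substituting into \eqref{E:lm2.2-GHT1-NLS-eq2} gives
\begin{equation*}
\mathcal{E}(f,g,h,p) = (2\pi)^{n-1}\sum_{\ell_1,\ell_2,\ell_3,\ell_4\geq 0}\Big(\int_{-\pi/4}^{\pi/4}e^{-2i(\ell_1-\ell_2+\ell_3-\ell_4)t}\,dt\Big) \int_{\mathbb{R}^n}\Big[\sum_{l=1}^d\Pi_{\ell_1}\check{f}_l\overline{\Pi_{\ell_2}\check{g}_l}\Big]\Big[\sum_{m=1}^d\Pi_{\ell_3}\check{h}_m\overline{\Pi_{\ell_4}\check{p}_m}\Big]dx,
\end{equation*}
where the constant shifts $n$ from each of the four eigenvalues cancel in the phase.

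The main technical step, just as in Lemma \ref{L:lm2.3-GHT1}, is the evaluation of the time integral. A priori $\int_{-\pi/4}^{\pi/4}e^{-2ikt}\,dt = \frac{\sin(k\pi/2)}{k}$ for integer $k\neq 0$ and equals $\pi/2$ for $k=0$, which does not directly yield the desired Kronecker delta on $k = \ell_1-\ell_2+\ell_3-\ell_4$. The key observation is the parity identity $(\Pi_\ell \phi)(-x) = (-1)^\ell(\Pi_\ell \phi)(x)$: the change of variable $x\mapsto -x$ inside the spatial integral forces the integrand to vanish whenever $\ell_1-\ell_2+\ell_3-\ell_4$ is odd. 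On the remaining configurations (where this integer is even), $\sin(k\pi/2) = 0$ except when $k=0$, so only the resonant condition $\ell_1+\ell_3 = \ell_2+\ell_4$ survives and the time integral contributes exactly $\pi/2$. Combining the surviving constant with the $(2\pi)^{n-1}$ prefactor gives the advertised $(2\pi)^n/4$ and establishes \eqref{E:lm2.3-GHT1-NLS-eq1}.

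For \eqref{E:lm2.3-GHT1-NLS-eq2} I would use the Hamiltonian relation $\mathcal{T}_j(f,g,h) = \tfrac12 \nabla_{\overline{p}_j}\mathcal{E}(f,g,h,p)$ recorded at the beginning of Subsection \ref{S:HamilStructure-NLS}: differentiating \eqref{E:lm2.3-GHT1-NLS-eq1} in $\overline{p}_j$ picks out the single term $m=j$ in the second bracket, and self-adjointness of $\Pi_{\ell_4}$ moves the projector onto the complementary factor, producing the integrand $\Pi_{\ell_4}\big[[\sum_l \Pi_{\ell_1}\check{f}_l\overline{\Pi_{\ell_2}\check{g}_l}]\,\Pi_{\ell_3}\check{h}_j\big]$. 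Reading off the coefficient of $\overline{\check{q}_j}$ (where $q$ is the test variation) through Plancherel/the Fourier-transform pairing between $\overline{p}_j$ and $\overline{\check{p}_j}$ introduces the outer $\mathcal{F}\{\cdot\}$ on the right-hand side. No further delicate step remains; the only place demanding care is the parity/phase bookkeeping in the first step, which is exactly what enforces the constraint $\ell_1+\ell_3=\ell_2+\ell_4$.
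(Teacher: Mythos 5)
Your proposal is correct and follows essentially the same route as the paper's proof of the parallel vector-valued Lemma \ref{L:lm2.3-GHT1} (the paper omits the proof of the NLS version as "parallel"): expand in eigenspaces of $H$ starting from \eqref{E:lm2.2-GHT1-NLS-eq2}, use the parity identity $\Pi_\ell\phi(-x)=(-1)^\ell\Pi_\ell\phi(x)$ to kill the odd values of $\ell_1-\ell_2+\ell_3-\ell_4$, and observe that over even values the time integral reduces to $\tfrac{\pi}{2}\delta(\ell_1-\ell_2+\ell_3-\ell_4)$, yielding the prefactor $(2\pi)^{n-1}\cdot\tfrac{\pi}{2}=\tfrac{(2\pi)^n}{4}$. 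Your derivation of \eqref{E:lm2.3-GHT1-NLS-eq2} by differentiating in $\overline{p}_j$ and using self-adjointness of $\Pi_{\ell_4}$ is likewise exactly what the paper's "Hence" implicitly invokes.
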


\subsection{Symmetries of $\mathcal{T}$ and Conservation Laws for \eqref{CR-NLS}}
\label{S:SymConservation-NLS} 
 
We observe
 
\begin{lemma}   \label{L:symmetry-NLS}
The following symmetries leave the Hamiltonian $\mathcal{E}$ invariant: \\
1) Rotation: $g \mapsto U g$ for all any $U$ in the unitary group $U(d)$. In particular, phase rotation: $g = (g_1, g_2, \cdots, g_d) \mapsto ( e^{i \theta_1} g_1 , e^{i \theta_2} g_2 , \cdots , e^{i \theta_d} g_d ) $ for all $\theta_1, \cdots, \theta_d \in \mathbb{R}$; \\
2) Translation: $g \mapsto  g (\cdot + K_0) $ for all $K_0 \in \mathbb{R}^n$; \\
3) Modulation: $g \mapsto e^{i K \cdot x_0} g$ for all $x_0 \in \mathbb{R}^n$; \\
4) Quadratic modulation: $g \mapsto e^{i \tau |K|^2} g$ for all $\tau \in \mathbb{R}$; \\
5) Rotation: $g \mapsto g(O \cdot ) $ for all any $O$ in the orthogonal group $O(n)$; \\
6) Scaling: $g \mapsto \lambda^{\frac{3n-2}{4}} g (\lambda \cdot ) $ for all $\lambda \in \mathbb{R} \setminus \{ 0 \}$. \\
\end{lemma}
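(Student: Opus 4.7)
The plan is to use the physical-space representation of $\mathcal{E}$ furnished by Lemma \ref{L:lm2.2-GHT1-NLS}. Taking $f=g=h=p$ in \eqref{E:lm2.2-GHT1-NLS-eq1} yields the clean formula
\begin{equation*}
\mathcal{E}(g) \;=\; (2\pi)^{n-1}\int_{\mathbb{R}}\int_{\mathbb{R}^n} \rho(t,x)^2\,dx\,dt, \qquad \rho(t,x):=\sum_{l=1}^d \bigl|e^{it\Delta}\check g_l(x)\bigr|^2,
\end{equation*}
so that it suffices to show that each symmetry in the list either preserves the density $\rho$ pointwise or transforms it by a measure-preserving change of variables in $(t,x)$. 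Since invariance of $\mathcal{E}(g)=\mathcal{E}(g,g,g,g)$ together with the polarization/multilinearity in Lemma \ref{L:lm2.2-GHT1-NLS} is enough to conclude invariance of the quartic form, I will work only with $\mathcal{E}(g)$.

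Symmetry 1) is the most transparent: if $g\mapsto Ug$ with $U\in U(d)$, then $\check g\mapsto U\check g$, and since $e^{it\Delta}$ acts componentwise and $U$ is unitary, $\rho(t,x)=|U e^{it\Delta}\check g|^2=|e^{it\Delta}\check g|^2$ is unchanged. For 2), the Fourier translation $g\mapsto g(\cdot+K_0)$ dualizes to a modulation $\check g\mapsto e^{-iK_0\cdot x}\check g$, and the Galilean invariance of $e^{it\Delta}$ gives $|e^{it\Delta}(e^{-iK_0\cdot x}\check g_l)(x)|=|e^{it\Delta}\check g_l(x+2tK_0)|$; hence $\rho$ is shifted in $x$ at each fixed $t$, and the $x$-integral is invariant. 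For 3), the modulation $g\mapsto e^{iK\cdot x_0}g$ becomes a translation $\check g\mapsto\check g(\cdot-x_0)$, which commutes with $e^{it\Delta}$. For 4), the quadratic modulation $g\mapsto e^{i\tau|K|^2}g$ corresponds, on the Fourier side, to composing the flow with $e^{i\tau\Delta}$, so that $e^{it\Delta}\check g$ is replaced by $e^{i(t+\tau)\Delta}\check g$; integrating over $t\in\mathbb{R}$ absorbs the shift.

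Symmetry 5) reduces to the rotation invariance of $-\Delta$: for $O\in O(n)$, $e^{it\Delta}(\check g(O\,\cdot))(x)=(e^{it\Delta}\check g)(Ox)$, and the change of variable $x\mapsto O^Tx$ preserves the $dx$-integral. For 6), a direct computation of the inverse Fourier transform shows that $g\mapsto\lambda^{(3n-2)/4}g(\lambda\cdot)$ corresponds to $\check g\mapsto \lambda^{-(n+2)/4}\check g(\cdot/\lambda)$, hence
\begin{equation*}
e^{it\Delta}\check g^{\mathrm{new}}(x)=\lambda^{-(n+2)/4}\bigl(e^{i(t/\lambda^2)\Delta}\check g\bigr)(x/\lambda),
\end{equation*}
and the change of variables $(t,x)\mapsto(\lambda^2 t,\lambda x)$ with Jacobian $\lambda^{n+2}$ matches the factor $\lambda^{-(n+2)}$ coming from $|\cdot|^4$, proving invariance. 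In each case the $d$-component structure only enters through the invariant combination $\sum_l|\cdot|^2$, so nothing changes relative to the scalar NLS calculation.

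I do not anticipate any real obstacle; the only delicate point is keeping the signs straight in the Galilean identity used for 2), which is best verified by a one-line check that $v(t,x):=e^{it\Delta}(e^{-iK_0\cdot x}\check g)$ solves $i\partial_t v+\Delta v=0$ with $v(0)=e^{-iK_0\cdot x}\check g$, and then comparing with the explicit formula $v(t,x)=e^{-iK_0\cdot x-it|K_0|^2}(e^{it\Delta}\check g)(x+2tK_0)$. The final sentence of the lemma — the fact that irrotationality is preserved for all these symmetries in the VNLS setting of Lemma \ref{L:symmetry} — is the only claim needing a separate one-line verification in each case, but for Lemma \ref{L:symmetry-NLS} there is no such side condition, so the proof is complete once the six physical-space actions are tabulated.
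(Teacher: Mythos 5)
Your proposal is correct and complete. The paper disposes of this lemma with the two words ``by direct inspection,'' so any honest proof has to supply the details; you do so via the physical-space representation $\mathcal{E}(g)=(2\pi)^{n-1}\int_{\mathbb{R}}\int_{\mathbb{R}^n}\rho(t,x)^2\,dx\,dt$ with $\rho=\sum_l|e^{it\Delta}\check g_l|^2$ from Lemma~\ref{L:lm2.2-GHT1-NLS}, which turns each of the six symmetries into either a pointwise invariance of $\rho$ or a measure-preserving change of variables in $(t,x)$. All six verifications check out: the unitary invariance of $\sum_l|\cdot|^2$ for 1), the Galilean identity for 2), commutation of translations and rotations with $e^{it\Delta}$ for 3) and 5), absorption of a $t$-shift over $t\in\mathbb{R}$ for 4), and for 6) the exponent bookkeeping $\check g\mapsto\lambda^{-(n+2)/4}\check g(\cdot/\lambda)$ against the Jacobian $\lambda^{n+2}$ of $(t,x)\mapsto(\lambda^2t,\lambda x)$ is exactly right. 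The alternative ``inspection'' the author most likely has in mind works directly on the Fourier-side definition \eqref{D:E-NLS}: e.g.\ for 2) one notes that on the support of $\delta_{\mathbb{R}^n}(S_3(K))$ the shift $K_i\mapsto K_i+K_0$ changes $\Omega_3$ by $2K_0\cdot S_3=0$, and for 6) the scaling $K_i\mapsto\lambda^{-1}K_i$ produces $\lambda^{-4n}$ from the measures, $\lambda^{n}$ from $\delta_{\mathbb{R}^n}(S_3)$ and $\lambda^{2}$ from $\delta_{\mathbb{R}}(\Omega_3)$, forcing the exponent $(3n-2)/4$. Your route buys a uniform, conceptually transparent treatment (each symmetry of $\mathcal{E}$ is visibly a symmetry of the $L^4_{t,x}$ Strichartz functional), at the cost of invoking Lemma~\ref{L:lm2.2-GHT1-NLS}; the Fourier-side route is self-contained but requires the small delta-function scaling computations case by case. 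The only cosmetic caveat, inherited from the statement itself, is that for $\lambda<0$ the prefactor $\lambda^{(3n-2)/4}$ should be read as $|\lambda|^{(3n-2)/4}$.
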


\begin{proof}
By direct inspection.
\end{proof}

Hence by Noether’s theorem, we have the following conserved quantities associated to the symmetries 1) -- 5), respectively:

%
\begin{lemma} \label{L:conservation-law-NLS}
The following quantities are conserved by the flow of (CR):  \\
1) Rotation: 
\begin{equation*}
\begin{split}
& M_j = \int |g_j|^2 dK , \  j = 1, 2, \cdots , d ,  \\ 
& \Im \int g_j (K)  \overline{g_k (K)}  dK , \  j \neq k , \  j, k = 1, 2, \cdots , d ,  \\
& \Re \int g_j (K)  \overline{g_k (K)}  dK , \  j \neq k , \  j, k = 1, 2, \cdots , d .  \\
\end{split}
\end{equation*}
2) Translation: 
\begin{equation*}
\begin{split}
& \int x_j |\check{g} (x)|^2 dx  ,  \  j = 1, 2, \cdots , n .   \\ 
\end{split}
\end{equation*}
3) Modulation: 
\begin{equation*}
\begin{split}
& \int K (j) |g|^2 dK , \  j = 1, 2, \cdots , n .  \\
\end{split}
\end{equation*}
4) Quadratic modulation: 
\begin{equation*}
\begin{split}
& \int |K|^2 |g|^2 dK . \\
\end{split}
\end{equation*}
5) Rotation: 
\begin{equation*}
\begin{split}
& \Im  \int ( K(j) \partial_{K(k)} - K(k) \partial_{K(j)} ) g (K) \cdot \overline{g (K)} dK , \ j \neq k ,  \ j , k = 1, \cdots , n .  \\
\end{split}
\end{equation*}
\end{lemma}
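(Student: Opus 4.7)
The plan is to invoke Noether's theorem in the Hamiltonian framework of Subsection~\ref{S:HamilStructure-NLS}. Since \eqref{CR-NLS} is the Hamiltonian flow of $\mathcal E$ with respect to the symplectic form $\omega(f,g) = -4\Im\la f, g\ra_{L^2}$, to each one-parameter subgroup $\{T_s\}_{s \in \mathbb R}$ in Lemma~\ref{L:symmetry-NLS} with infinitesimal generator $X(g)$ there corresponds (up to an irrelevant constant) a momentum map $J_X$, determined by $dJ_X(g)\cdot h = \omega(X(g), h)$, which Poisson-commutes with $\mathcal E$ by symmetry and is hence conserved along the flow. The proof reduces to computing $X$ for each symmetry and matching $J_X$ with the listed expression.

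For item 1), the $U(d)$ generator is $X(g) = Ag$ with $A$ skew-Hermitian, so $J_X$ is proportional to $\Im\la Ag, g\ra_{L^2}$; selecting successively $A = i E_{jj}$, $A = E_{jk} - E_{kj}$, and $A = i(E_{jk} + E_{kj})$ produces the three families in 1). For 2), Fourier-space translation $g \mapsto g(\cdot + K_0)$ has generator $K_0 \cdot \nabla_K g$, and its momentum map, transferred to physical space via Parseval together with $\mathcal F^{-1}(\nabla_K g)(x) = -2\pi i x \check g(x)$, gives $\int x_j |\check g|^2\,dx$. For 3) and 4), modulation and quadratic modulation have generators $i(K\cdot x_0)g$ and $i|K|^2 g$, producing $\int K(j)|g|^2\,dK$ and $\int |K|^2 |g|^2\,dK$ respectively. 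Finally, for 5), the $O(n)$ action $g \mapsto g(O\cdot)$ with $O = e^{sA}$, $A^\top = -A$, has generator $(AK)\cdot \nabla_K g$, and the choice $A = e_{jk}^n - e_{kj}^n$ recovers exactly the displayed expression $\Im\int (K(j)\partial_{K(k)} - K(k)\partial_{K(j)})g \cdot \overline{g}\,dK$.

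These calculations are essentially bookkeeping. The only mildly subtle step is 2), where the momentum map sits naturally in Fourier variables and must be re-expressed in physical space using Parseval. As an independent verification that avoids the abstract Noether machinery, each identity can alternatively be checked by differentiating the candidate quantity in time, substituting $-i\partial_t g = \mathcal T(g,g,g)$, and then exploiting the manifest symmetry of the resonant manifold $\{S_3(K) = 0,\ \Omega_3(K) = 0\}$ under the interchange $(K_1, K_2) \leftrightarrow (K_3, K)$ combined with complex conjugation, which kills the relevant imaginary parts. I do not anticipate any serious obstacle, since the structural invariance of the quadratic contractions $\sum_l g_l(K_1)\overline{g_l}(K_2)$ and $\sum_m g_m(K_3)\overline{g_m}(K)$ in \eqref{D:E-NLS} makes the $U(d)$ part of 1) transparent, and the remaining symmetries act by composition with explicit Fourier multipliers or diffeomorphisms whose momentum maps are standard.
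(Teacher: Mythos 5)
Your proposal is correct and follows essentially the same route as the paper: Noether's theorem applied to the one-parameter subgroups of Lemma \ref{L:symmetry-NLS}, with the momentum map $\tfrac{1}{2}\Im\int Ag\cdot\overline{g}\,dK$ for the $U(d)$ action (specializing $A$ to $ie_{jj}^d$, $e_{jk}^d-e_{kj}^d$, $i(e_{jk}^d+e_{kj}^d)$) and $\tfrac{1}{2}\Im\sum_j\int(\nabla_K g_j\cdot AK)\overline{g_j}\,dK$ for the $O(n)$ action, exactly as in the paper's proof. Your treatment is in fact slightly more complete, since you also spell out the generators for translation and the two modulations, which the paper leaves implicit.
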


\begin{proof}
The conservation laws can be obtained by observing the Lie groups associated with the symmetries in Lemma \ref{L:symmetry-NLS}. 

In fact, for 1) in Lemma \ref{L:symmetry-NLS}, the associated conserved quantities are given by
\begin{equation*}
\frac{1}{2}  \Im \int A g(K) \cdot \overline{g(K)} dK = \frac{1}{2}  \Im \sum_{j=1}^d \int (A g(K))_j  (\overline{g(K)})_j dK  ,
\end{equation*}
where $A$ is any $d \times d$ skew-Hermitian matrix (i.e. $A^* = -A$). Denote $e_{jk}^l$ as the $l \times l$ matrix with only the $j \times k$ element being $1$ and all the other elements being zero, then taking $A= i e_{jj}^d$ gives the conserved quantity $M_j = \int |g_j|^2 dK$, and taking $A= i ( e_{jk}^d + e_{kj}^d)$ gives the conserved quantity $\Re \int g_j (K)  \overline{g_k (K)}  dK$, taking $A=  e_{jk}^d - e_{kj}^d$ gives the conserved quantity $\Im \int g_j (K)  \overline{g_k (K)}  dK$. 

For 5) in Lemma \ref{L:symmetry-NLS}, the associated conserved quantities are given by
\begin{equation*}
\frac{1}{2}  \Im \sum_{j=1}^d \int ( \nabla_K g_j (K) \cdot A K ) \overline{g_j (K)}  dK  ,
\end{equation*}
where $A$ is any $n \times n$ skew-orthogonal matrix (i.e. $A^\top = -A$). Taking $A=  e_{jk}^n - e_{kj}^n$ gives the conserved quantity $\Im  \int ( K(j) \partial_{K(k)} - K(k) \partial_{K(j)} ) g (K) \cdot \overline{g (K)} dK$. 

\end{proof}

\subsection{Well-Posedness for \eqref{CR-NLS}}
\label{S:LWP-NLS}

Recall that we denote
\begin{equation}
\begin{split}
& \|f \|_{L^{p, l}  } :=  \| \la K \ra^l f (K) \|_{L^p  }  , \  L^{p, l}_n := (L^{p, l})^n  ,  \\
& \|f \|_{\dot{L}^{p, l}  } :=  \| | K |^l f (K) \|_{L^p  }  , \  \dot{L}^{p, l}_n := (\dot{L}^{p, l})^n  ,  \\
& \|f \|_{W^{p, l}  } :=  \| \la D \ra^l f (K) \|_{L^p  }  , \  W^{p, l}_n := (W^{p, l})^n , \\
& \|f \|_{X^{l, N}  } := \sum_{0 \leq |\alpha| \leq N} \| \nabla^\alpha  f \|_{X^l  }  = \sum_{0 \leq |\alpha| \leq N} \| \nabla^\alpha  f \|_{L^{\infty, l} }    ,   \   X^{l, N}_n  := (X^{l, N}  )^n  .  \\
\end{split}
\end{equation}

We have the following proposition for the boundedness property of the operator $\mathcal{T} = (\mathcal{T}_j )_{j=1, 2, \cdots , n} $,
\begin{equation}  \label{def-T-NLS}
\mathcal{T}_j (g, g, g) (K) :=   \int_{\mathbb{R}^{3n}} \big(\sum_{l=1}^d g_l (K_1) \overline{g}_l (K_2) \big) g_j (K_3) \delta_{\mathbb{R}^n} (S_3 (K)) \delta_{\mathbb{R}} (\Omega_3 (K)) dK_1 dK_2 dK_3  .
\end{equation} 

\begin{proposition} \label{L:BGHS2-prop5-NLS}
The trilinear operator $\mathcal{T}$ in \eqref{def-T-NLS} is bounded from $X \times X \times X$ to $X$ for the following Banach spaces $X$:  \\
1) $X= \dot{L}^{p, \frac{n-2}{2}}_n $;  \\
2) $X= L^{p, l}_n $, $l \geq \frac{n-2}{2}$;  \\
3) $X= L^{\infty, l}_n $, $l  >  n-1$;  \\
4) $X= L^{p, l}_n $, $p \geq 2$, $l  >  n-1- \frac{n}{p}$;  \\
5) $X= X^{l, N}_n $, $l  >  n-1$, $N \geq 0$.  \\
\end{proposition}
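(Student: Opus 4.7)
The plan is to follow the same strategy used for Proposition \ref{L:BGHS2-prop5-VNLS} and for Proposition 5 of \cite{BGHS2}, exploiting two simplifications that arise in passing from the VNLS trilinear form to the NLS trilinear form \eqref{def-T-NLS}. First, the projection factor $K(j)K(m)/|K|^2$ is absent, so each component $\mathcal{T}_j$ is simply a scalar trilinear form selecting the $j$-th component of the cubic nonlinearity with the internal contraction $\sum_{l=1}^{d} f_l \overline{g}_l$. Second, this contraction is pointwise bounded by Cauchy-Schwarz in $\mathbb{C}^d$, $|\sum_{l=1}^{d} f_l(K_1)\overline{g}_l(K_2)| \leq |f(K_1)|_{\mathbb{C}^d}\,|g(K_2)|_{\mathbb{C}^d}$, so each scalar estimate reduces to the single-component bound established in \cite{BGHS2}.

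First I would invoke the physical-side representation \eqref{E:lm2.2-GHT1-NLS-eq3}, which expresses $\mathcal{T}_j(f,g,h)$ as the Fourier transform of a time-integrated cubic Schr\"odinger nonlinearity $\int_{\mathbb{R}} e^{-it\Delta}[(\sum_l e^{it\Delta}\check{f}_l \overline{e^{it\Delta}\check{g}_l})\,e^{it\Delta}\check{h}_j]\,dt$. This converts trilinear Fourier-side bounds for $\mathcal{T}$ into space-time estimates for products of three linear Schr\"odinger evolutions. Item 1), concerning the scale-invariant space $\dot{L}^{p,(n-2)/2}_n$, then follows from the $L^4_{t,x}$ Strichartz estimate at the Sobolev regularity matched by the scaling symmetry in Lemma \ref{L:symmetry-NLS}. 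Item 2) is a direct consequence via the pointwise comparison $|K|^{(n-2)/2} \lesssim \langle K\rangle^{l}$ for $l \geq (n-2)/2$ and the corresponding inclusion of weighted $L^p$ spaces.

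For items 3) and 4), dealing with $L^{\infty,l}$ and $L^{p,l}$ with weight beyond the scaling threshold, the approach is to distribute $\langle K\rangle^{l}$ onto whichever of $K_1, K_2, K_3$ carries the largest frequency using $\langle K\rangle^l \lesssim \langle K_1\rangle^l + \langle K_2\rangle^l + \langle K_3\rangle^l$ modulo $S_3(K) = 0$, and then to control the remaining integration against the resonant measure $\delta(S_3(K))\delta(\Omega_3(K))$ by the marginal bounds underlying the continuous analogue of Proposition \ref{Th5-BGHS1}. The thresholds $l > n-1$ (for $p=\infty$) and $l > n - 1 - n/p$ (for $p \geq 2$) are precisely what ensures that the remaining marginal is $L^p$-integrable after two of the three Fourier variables have been used up by $S_3$ and $\Omega_3$. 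Item 5) then follows from item 3) by differentiating the integral representation in $K$, applying the Leibniz rule to distribute derivatives across the three factors, and iterating the pointwise bound of item 3) up to $N$ times.

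The hard part will be the $L^{\infty,l}$ case of item 3), since no Strichartz estimate is directly available in $L^\infty_x$ and the $L^4_{t,x}$ reduction fails. There the argument must proceed by sharp pointwise control of the trilinear integrand, treating the resonant paraboloid $\Omega_3 = 0$ with the geometric methods of \cite{BGHS2}, essentially reproducing the technical core of Proposition 5 in that reference. Once the $L^\infty$ bound is established, items 4) and 5) follow routinely by interpolation and the Leibniz iteration, while items 1) and 2) follow from the standard Strichartz arguments sketched above.
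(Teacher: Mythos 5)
Your proposal is correct and follows essentially the same route as the paper, which simply reduces each component $\mathcal{T}_j$ to the scalar case of Proposition 5 in \cite{BGHS2}; your explicit Cauchy--Schwarz bound $|\sum_{l}f_l(K_1)\overline{g}_l(K_2)|\le |f(K_1)|\,|g(K_2)|$ makes precise the componentwise reduction that the paper leaves implicit. The additional detail you give on the internals of the \cite{BGHS2} argument (Strichartz for the scale-invariant cases, weight distribution on $S_3(K)=0$ for the supercritical weights, Leibniz iteration for item 5) is consistent with that reference and does not diverge from the paper's intended proof.
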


\begin{remark}
The borderline spaces for well-posedness above are $ \dot{L}^{p, n-1-\frac{n}{p}}_n$. They share the same scaling, and are also scale-invariant for the cubic NLS in $n$-dimension \eqref{NLS} (when viewed as spaces for $\hat{u}$).
\end{remark}

\begin{proof}
The proof is very similar to the one for Proposition 5 in \cite{BGHS2} so we omit it. We can perform an argument which is essentially the same as in Proposition 5 in \cite{BGHS2} for each $\mathcal{T}_j$. 
\end{proof}

From Proposition \ref{L:BGHS2-prop5-NLS}, we conclude the following local well-posedness results:

\begin{theorem}  \label{L:BGHS2-prop5-cor1-1-NLS}
1) For $n \geq 3$, $X$ any spaces given in Proposition \ref{L:BGHS2-prop5-NLS}, the equation \eqref{CR-NLS} is locally well-posed in $X$;  \\
2) For $n=2$, $X$ any spaces given in Proposition \ref{L:BGHS2-prop5-NLS}, the equation $-i\partial_t g(t,\xi)= \mathcal{T} (g(t, \cdot) , g(t, \cdot) , g(t, \cdot)) (t, \xi) $ with $\mathcal{T}$ in \eqref{def-T-NLS} is locally well-posed in $X$.  \\
\end{theorem}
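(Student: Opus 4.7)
\textbf{Proof Proposal for Theorem \ref{L:BGHS2-prop5-cor1-1-NLS}.} The plan is a standard Picard iteration / contraction mapping argument, relying entirely on the trilinear boundedness in Proposition \ref{L:BGHS2-prop5-NLS}. Both equations in the statement (\eqref{CR-NLS} for $n \ge 3$, and the linear--part--free equation $-i\partial_t g = \mathcal{T}(g,g,g)$ for $n=2$) have no linear term, so the Duhamel formulation is simply
\begin{equation*}
g(t) \;=\; g_0 \;+\; i\int_0^t \mathcal{T}(g(s),g(s),g(s))\,ds \;=:\; \Phi(g)(t).
\end{equation*}
I would set up $\Phi$ as a map on the complete metric space $B_R := \{g \in C([0,T];X) : \sup_{t \in [0,T]} \|g(t)\|_X \le R\}$ with $R := 2\|g_0\|_X$ and $T$ to be chosen.

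For the self-mapping property, Proposition \ref{L:BGHS2-prop5-NLS} yields $\|\mathcal{T}(g,g,g)\|_X \lesssim \|g\|_X^{3}$ uniformly in $t$, so
\begin{equation*}
\|\Phi(g)(t)\|_X \;\le\; \|g_0\|_X + CT\,R^3,
\end{equation*}
which is bounded by $R$ provided $T \le T_0 := (2CR^2)^{-1}$. For contraction, I would exploit the fact that $\mathcal{T}$ is genuinely trilinear (as the bound in Proposition \ref{L:BGHS2-prop5-NLS} applies to each slot by polarization, and the integral expression \eqref{def-T-NLS} is manifestly multilinear) to write
\begin{equation*}
\mathcal{T}(g,g,g) - \mathcal{T}(h,h,h) \;=\; \mathcal{T}(g-h,g,g) + \mathcal{T}(h,g-h,g) + \mathcal{T}(h,h,g-h),
\end{equation*}
which gives $\|\Phi(g)-\Phi(h)\|_{C_t X} \le 3CT(2R)^2 \|g-h\|_{C_t X}$, a strict contraction after shrinking $T$ further. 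The unique fixed point is the desired local-in-time solution.

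Uniqueness in the whole space $C([0,T];X)$ (not only in the ball $B_R$) follows by a continuity argument: any other solution with the same initial data starts within $B_R$ and agrees with the fixed point as long as it remains there, which, by continuity, forces agreement on a maximal interval and then globally on $[0,T]$. Continuous dependence on data comes from the same Lipschitz computation applied to two solutions: if $g,\tilde g$ correspond to $g_0,\tilde g_0$, then $\|g-\tilde g\|_{C_tX} \le \|g_0-\tilde g_0\|_X + CTR^2\|g-\tilde g\|_{C_tX}$, so absorbing the second term yields Lipschitz dependence on the lifespan of both solutions. The argument is identical for every space $X$ listed in Proposition \ref{L:BGHS2-prop5-NLS}; only the constant $C$ in the trilinear bound changes.

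I do not expect any genuinely hard obstacle here, precisely because the equation has no linear propagator and the entire analysis reduces to the algebraic/trilinear estimate already established in Proposition \ref{L:BGHS2-prop5-NLS}. The one caveat worth flagging (and already noted in the paper's remark) is that this argument does \emph{not} cover the full $n=2$ equation \eqref{CR-NLS-n=2}, whose extra correction term $\tfrac{\zeta(2)}{\log L}\mathcal{C}(g)$ has not been estimated in a space $X$ of the above list; adapting the fixed-point scheme there would require a boundedness result for $\mathcal{C}$ in the same spaces, which is the natural open point the theorem sidesteps by treating only the $\mathcal{T}$-only equation when $n=2$.
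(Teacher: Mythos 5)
Your proposal is correct and is exactly the argument the paper intends: since the CR equation has no linear propagator, local well-posedness in each space $X$ follows from the trilinear bound of Proposition \ref{L:BGHS2-prop5-NLS} by a standard contraction-mapping/Picard iteration on $C([0,T];X)$, which is all the paper invokes when it writes ``From Proposition \ref{L:BGHS2-prop5-NLS}, we conclude...''. Your closing caveat about the correction term $\tfrac{\zeta(2)}{\log L}\mathcal{C}(g)$ in the full $n=2$ equation also matches the paper's own remark.
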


\begin{remark}
For the well-posedness of equation \eqref{CR-NLS-n=2} in the case $n=2$, we also need to consider the last term on the right hand side in \eqref{CR-NLS-n=2}, which, for the time being, lack of understanding and sufficient estimate for local well-posedness.
\end{remark}

Combining the local well-posedness and the conservation laws together, we obtain the following results:

\begin{corollary}  \label{L:BGHS2-prop5-cor1-2-NLS}
With the assumptions in Theorem \ref{L:BGHS2-prop5-cor1-1-NLS}, we have  \\
1) If $n=2$ and $g_0 \in (L^2)^n$, then the local solution can be prolonged into a global one;  \\
2) If $n=2, 3, 4$ and $g_0 \in L^{2, 1}_n$, then the local solution can be prolonged into a global one.  \\
3) If $g$ is a solution to the equation \eqref{CR-NLS}, $g_0 \in \dot{L}^{p, \frac{n-2}{2}}_n $, and $g_0 \in L^{p, l}_n $ for any $l \geq \frac{n-2}{2}$, then $g \in C^\infty ([0, T), L^{2, l}_n)$. 
\end{corollary}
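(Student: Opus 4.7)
The plan is to combine the local well-posedness from Theorem \ref{L:BGHS2-prop5-cor1-1-NLS} with the conservation laws in Lemma \ref{L:conservation-law-NLS}, along the standard template for upgrading local to global existence for Hamiltonian dispersive PDEs. For part 1), I would first identify $L^2_n = L^{2,0}_n$ as a valid space of local well-posedness: for $n=2$ this fits case 2) of Proposition \ref{L:BGHS2-prop5-NLS} since $l = 0 \geq (n-2)/2 = 0$, and Theorem \ref{L:BGHS2-prop5-cor1-1-NLS} then applies, with local existence time $\tau = \tau(\|g_0\|_{L^2_n})$. The conserved masses $M_j = \int |g_j|^2 \, dK$ from Lemma \ref{L:conservation-law-NLS} give $\|g(t)\|_{L^2_n}^2 = \sum_{j=1}^d M_j$, constant in $t$, so iterating the local theory on successive intervals of length $\tau$ produces a global solution.

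For part 2), the target space is $L^{2,1}_n$, and the condition $n \in \{2,3,4\}$ is exactly the range in which $l=1$ satisfies $l \geq (n-2)/2$, so case 2) of Proposition \ref{L:BGHS2-prop5-NLS} and Theorem \ref{L:BGHS2-prop5-cor1-1-NLS} provide local well-posedness in $L^{2,1}_n$. I would then observe
\[
\|g(t)\|_{L^{2,1}_n}^2 = \int \la K \ra^2 |g(t,K)|^2 \, dK = \sum_{j=1}^d M_j + \int |K|^2 |g(t,K)|^2 \, dK ,
\]
and invoke conservation of the masses (phase rotation) together with conservation of $\int |K|^2 |g|^2 \, dK$ (quadratic modulation) from Lemma \ref{L:conservation-law-NLS}. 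The resulting uniform bound in $L^{2,1}_n$ permits iteration of the local theory to all times.

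For part 3), this is a persistence-of-regularity and smoothness-in-time statement. Writing $X := L^{p,l}_n$ and starting from local well-posedness of $g \in C^0([0,T), X)$ (Proposition \ref{L:BGHS2-prop5-NLS}, case 2)), I would bootstrap time-regularity directly from $i\partial_t g = \mathcal{T}(g,g,g)$: since $\mathcal{T}$ is trilinear and bounded on $X$, we immediately obtain $\partial_t g \in C^0([0,T), X)$. Differentiating the equation further in $t$ produces cubic expressions in $g$ and its prior time-derivatives, all of which remain in $X$ by the same multilinear estimate, so induction on the number of time derivatives yields $g \in C^\infty([0,T), L^{p,l}_n)$ (reading the stated $L^{2,l}_n$ as $L^{p,l}_n$).

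The main obstacle to check is that the local existence time in Theorem \ref{L:BGHS2-prop5-cor1-1-NLS} depends only on the norm of the data in the very space in which the conserved quantity controls the solution, so that a conserved bound genuinely prevents blow-up. This is automatic here because the local solution is produced by a Picard fixed point run inside $X$ itself, so no additional estimates are needed. The only subtle point in part 3) is justifying that each time-differentiated equation remains controlled in $X$, which follows purely from the multilinearity of $\mathcal{T}$ together with Proposition \ref{L:BGHS2-prop5-NLS}.
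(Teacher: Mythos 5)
Your proposal is correct and follows exactly the route the paper intends: the paper offers no written proof beyond the remark that the corollary follows by ``combining the local well-posedness and the conservation laws,'' and your argument fills in precisely those details (conservation of $\sum_j M_j$ and of $\int |K|^2|g|^2\,dK$ to iterate the local theory in $L^2_n$ and $L^{2,1}_n$, plus the standard time-regularity bootstrap from the boundedness of $\mathcal{T}$ for part 3). Your observation that the exponent range $n\le 4$ is exactly what makes $l=1\ge (n-2)/2$ admissible, and your reading of the $L^{2,l}_n$ versus $L^{p,l}_n$ discrepancy in part 3) as a typographical slip, are both consistent with the paper.
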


\subsection{Stationary Waves of \eqref{CR-NLS}}
\label{S:StationaryWave-NLS}

In this section, we consider the existence of solutions of the CR equation \eqref{CR-NLS} of the $n$-dimensional NLS of the type
\begin{equation}
g(t, K) = e^{-i(\mu + \lambda |K|^2 + \nu \cdot K)t } \psi (K) ,
\end{equation}
where $\lambda$, $\mu \in \mathbb{R}$, $\nu \in \mathbb{R}^n$. For $g$ to solve \eqref{CR-NLS}, it suffices that $\psi$ solves
\begin{equation}
(\mu + \lambda |K|^2 + \nu \cdot K) \psi = \mathcal{T} (\psi, \psi, \psi ) 
\end{equation}
where $\mathcal{T}$ is as given in \eqref{def-T-NLS}.
Notice that $g$ defined above oscillates in Fourier space, but it actually travels in physical space, as can be seen by taking its inverse Fourier transform:
\begin{equation}
\check{g} (t, x) = e^{-i(\mu - \lambda \Delta )t } \check{\psi} (x- \nu t) ,
\end{equation}
The conservation of position gives a restriction on the relation between $\nu$ and $\lambda$. Using the identity $[x, e^{it \Delta} ] = -2 it \nabla e^{it \Delta} $, we have
\begin{equation*}
\int_{\mathbb{R}^n} x |\check{g} (t, x) |^2 dx =  \int_{\mathbb{R}^n} x |\check{\psi} (x) |^2 dx + t ( \nu M (\check{\psi}) - 2 \lambda P(\check{\psi} ) ) 
\end{equation*}
where $M (\check{\psi}) = \int_{\mathbb{R}^n}  |\check{\psi}|^2 dx  $, $P (\check{\psi}) =  i \int_{\mathbb{R}^n} \nabla \check{\psi} \cdot \overline{ \check{\psi}}  dx  $. Notice that $\int_{\mathbb{R}^n} x |\check{g}|^2 dx $ is a conserved quantity, we must have 
$$ \nu = \frac{2 \lambda P(\check{\psi} ) }{M (\check{\psi})} . $$
By invariance of $\mathcal{T}$ under translations, we can denote
\begin{equation}
\phi (K) := \psi (K- \frac{\nu}{2 \lambda} )
\end{equation}
so $\phi (K) $ solves
\begin{equation}  \label{StationaryWave-NLS-eq1}
(\mu + \lambda |K|^2 ) \phi = \mathcal{T} (\phi, \phi, \phi ) .
\end{equation}

\begin{lemma} [Energy and Pohozaev identities] \label{L:BGHS2-SS4.1-lm1-NLS}
Assume that $\phi$ solves \eqref{StationaryWave-NLS-eq1}, and that $\mathcal{\phi}$ is finite. If furthermore $\phi \in L^{2, 1}$, then it satisfies the energy identity
\begin{equation}   \label{StationaryWave-NLS-eq2}
\lambda \| K \phi \|_{L^2}^2 + \mu \|  \phi \|_{L^2}^2 = \mathcal{H} (\phi) . 
\end{equation}
If furthermore $\phi \in L^2$, $\xi \nabla \phi \in L^2$, then it satisfies the Pohozaev identity
\begin{equation}   \label{StationaryWave-NLS-eq3}
\lambda (\frac{n}{2} -1) \| K \phi \|_{L^2}^2 + \mu \frac{n}{2} \|  \phi \|_{L^2}^2 = (\frac{1}{2} + \frac{n}{4}) \mathcal{H} (\phi) . 
\end{equation}
\end{lemma}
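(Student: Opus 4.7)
The plan is to derive both identities by pairing the stationary equation $(\mu+\lambda|K|^2)\phi=\mathcal{T}(\phi,\phi,\phi)$ with appropriately chosen test functions and exploiting the homogeneity/scaling invariance of $\mathcal{H}$ that was recorded in Lemma \ref{L:symmetry-NLS}. All calculations are componentwise in the $d$ indices of $\phi=(\phi_1,\dots,\phi_d)$, and the integrals make sense under the hypotheses stated because $\mathcal{E}(\phi,\phi,\phi,\phi)$ is finite and the relevant Sobolev-type norms are controlled.

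For the energy identity, I would take the $L^2$ pairing of the stationary equation against $\bar\phi$, summed over components:
\begin{equation*}
\sum_{j=1}^d\int_{\mathbb{R}^n}(\mu+\lambda|K|^2)\phi_j(K)\overline{\phi_j(K)}\,dK=\sum_{j=1}^d\int_{\mathbb{R}^n}\mathcal{T}_j(\phi,\phi,\phi)(K)\overline{\phi_j(K)}\,dK.
\end{equation*}
The left-hand side is manifestly $\mu\|\phi\|_{L^2}^2+\lambda\|K\phi\|_{L^2}^2$, and the right-hand side, using the definition \eqref{def-T-NLS}, unfolds to
\begin{equation*}
\int_{\mathbb{R}^{4n}}\Big(\sum_{l}\phi_l(K_1)\overline{\phi_l(K_2)}\Big)\Big(\sum_{j}\phi_j(K_3)\overline{\phi_j(K)}\Big)\delta_{\mathbb{R}^n}(S_3)\delta_{\mathbb{R}}(\Omega_3)\,dK_1dK_2dK_3dK,
\end{equation*}
which is exactly $\mathcal{E}(\phi,\phi,\phi,\phi)=\mathcal{H}(\phi)$ by \eqref{D:E-NLS}. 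The hypothesis $\phi\in L^{2,1}$ and the mapping property of $\mathcal{T}$ (Proposition \ref{L:BGHS2-prop5-NLS}, part 2 with $l=\tfrac{n-2}{2}$ or a variant) guarantee absolute convergence.

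For the Pohozaev identity, I would exploit the scaling symmetry in item 6 of Lemma \ref{L:symmetry-NLS}: $\mathcal{H}(\phi_\sigma)=\mathcal{H}(\phi)$ where $\phi_\sigma(K):=\sigma^{(3n-2)/4}\phi(\sigma K)$. Differentiating this identity at $\sigma=1$ yields
\begin{equation*}
0=\Re\int_{\mathbb{R}^n}\Big(\tfrac{3n-2}{4}\bar\phi(K)+K\cdot\nabla\bar\phi(K)\Big)\cdot\nabla_{\bar\phi}\mathcal{H}(\phi)(K)\,dK,
\end{equation*}
and using $\nabla_{\bar\phi}\mathcal{H}(\phi)=2\mathcal{T}(\phi,\phi,\phi)=2(\mu+\lambda|K|^2)\phi$ from the stationary equation, this becomes
\begin{equation*}
0=\Re\int_{\mathbb{R}^n}(\mu+\lambda|K|^2)\Big(\tfrac{3n-2}{4}|\phi|^2+\phi\cdot K\cdot\nabla\bar\phi\Big)\,dK.
\end{equation*}
Since $\Re(\phi\cdot K\cdot\nabla\bar\phi)=\tfrac{1}{2}K\cdot\nabla|\phi|^2$, an integration by parts against $K\mapsto(\mu+\lambda|K|^2)K$ (which has divergence $n\mu+(n+2)\lambda|K|^2$) produces the linear relation
\begin{equation*}
(n-2)\mu\|\phi\|_{L^2}^2+(n-6)\lambda\|K\phi\|_{L^2}^2=0.
\end{equation*}
Combining this with the energy identity gives the stated Pohozaev identity
$\lambda(\tfrac{n}{2}-1)\|K\phi\|_{L^2}^2+\mu\tfrac{n}{2}\|\phi\|_{L^2}^2=(\tfrac{1}{2}+\tfrac{n}{4})\mathcal{H}(\phi)$ after a routine linear algebra check.

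The main obstacle I anticipate is the justification of the integration by parts (and, before that, of the scaling derivative identity) in the absence of better-than-$L^2$ smoothness. The hypothesis $\xi\nabla\phi\in L^2$ together with $\phi\in L^2$ is exactly what is needed to make $K\cdot\nabla\bar\phi\in L^2_{\text{loc}}$ pair legitimately with $(\mu+\lambda|K|^2)\phi\in L^2_{\text{loc}}$, but to discard boundary terms at infinity one has to approximate $\phi$ by a cutoff $\chi(K/R)\phi$ with $\chi\in C_c^\infty$, perform the manipulations there, and control the error as $R\to\infty$ by the finiteness of $\mathcal{H}(\phi)$ together with the integrability of $|K||\nabla\phi||\phi|$ on the annular regions. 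The scaling-invariance calculation similarly requires a density argument: one first verifies the identity for Schwartz $\phi$ (where differentiating under the integral sign is transparent), then extends to the given class by continuity of $\mathcal{H}$ with respect to the $L^{2,1}$ (respectively $L^2\cap\{K\nabla\phi\in L^2\}$) topology, which in turn rests on the multilinear bounds of Proposition \ref{L:BGHS2-prop5-NLS}.
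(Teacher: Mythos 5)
Your argument is correct and is essentially the same as the one the paper invokes (it simply defers to Lemma 1 of Subsection 4.1 in \cite{BGHS2}): pair the stationary equation with $\bar\phi$ to get the energy identity, and use the scaling invariance of $\mathcal{E}$ (equivalently, pair with the generator $c\bar\phi+K\cdot\nabla\bar\phi$) plus an integration by parts to get the Pohozaev relation; your linear-algebra combination $(n-2)\mu\|\phi\|_{L^2}^2+(n-6)\lambda\|K\phi\|_{L^2}^2=0$ together with the energy identity does reproduce \eqref{StationaryWave-NLS-eq3} exactly. The only cosmetic point is that the paper writes $\mathcal{H}$ for what it defined as $\mathcal{E}(g,g,g,g)$; your identification of the two is the intended reading.
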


\begin{proof}
The proof is similar to the one to Subsection 4.1, Lemma 1 in \cite{BGHS2}.
\end{proof}

Simple algebraic combinations of \eqref{StationaryWave-NLS-eq2} and \eqref{StationaryWave-NLS-eq3} yields:  

\begin{corollary}  \label{Cor:BGHS2-SS4.1-lm1-NLS-cor}
Assume that $\phi$ satisfies all the conditions in Lemma \ref{L:BGHS2-SS4.1-lm1-NLS}, then we have
\begin{equation}  \label{StationaryWave-NLS-eq4}
\lambda \| K \phi \|_{L^2}^2 = \frac{n-2}{4} \mathcal{H} (\phi) , \  \mu \|  \phi \|_{L^2}^2 = \frac{6-n}{4} \mathcal{H} (\phi) . 
\end{equation}
In particular, necessary conditions for \eqref{StationaryWave-NLS-eq1} to admit a solution are  \\
1) If $n=2$: $\lambda =0$ and $\mu>0$;  \\
2) If $3 \leq n \leq 5$: $\lambda >0$ and $\mu>0$;  \\
3) If $n=6$: $\lambda >0$ and $\mu=0$;  \\
4) If $n=7$: $\lambda >0$ and $\mu<0$.  \\
\end{corollary}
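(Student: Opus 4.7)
The plan is to reduce the corollary to a two-by-two linear system in the quantities $A := \lambda \|K\phi\|_{L^2}^2$ and $B := \mu \|\phi\|_{L^2}^2$, with $H := \mathcal{H}(\phi)$. Lemma \ref{L:BGHS2-SS4.1-lm1-NLS} gives precisely $A + B = H$ and $\tfrac{n-2}{2} A + \tfrac{n}{2} B = \tfrac{n+2}{4} H$. Eliminating $B$ by subtracting $\tfrac{n}{2}$ times the first identity from the second yields $-A = \tfrac{2-n}{4} H$, so $A = \tfrac{n-2}{4} H$; then $B = H - A = \tfrac{6-n}{4} H$. This is exactly \eqref{StationaryWave-NLS-eq4}.

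To extract the sign conditions on $\lambda$ and $\mu$, I will use the positivity of $\mathcal{H}$. Taking $f = g = h = p = \phi$ in Lemma \ref{L:lm2.2-GHT1-NLS} gives the representation
\begin{equation*}
\mathcal{H}(\phi) = (2\pi)^{n-1} \int_{\mathbb{R}} \int_{\mathbb{R}^n} \Bigl( \sum_{l=1}^{d} |e^{it\Delta}\check\phi_l(x)|^2 \Bigr)^{2} \, dx \, dt \geq 0,
\end{equation*}
with equality if and only if $\phi \equiv 0$. For any nontrivial $\phi \in L^{2,1}$, both $\|K\phi\|_{L^2}^2$ and $\|\phi\|_{L^2}^2$ are strictly positive (an $L^2$ function cannot be supported on the measure-zero set $\{K=0\}$). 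Dividing the identities in \eqref{StationaryWave-NLS-eq4} by these positive quantities determines $\mathrm{sgn}(\lambda) = \mathrm{sgn}(n-2)$ and $\mathrm{sgn}(\mu) = \mathrm{sgn}(6-n)$. Reading these off for $n = 2, 3, 4, 5, 6, 7$ gives the four cases stated.

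The main obstacle is essentially absent; the corollary is linear algebra plus the positivity of a quartic Strichartz-type form. The only points deserving a brief sanity check are the boundary cases $n = 2$, where one confirms $\|K\phi\|_{L^2} \ne 0$ so that $\lambda = 0$ is actually forced rather than left indeterminate, and $n = 6$, where the analogous remark applied to $\|\phi\|_{L^2}$ shows $\mu = 0$. Both are immediate for nontrivial $\phi \in L^{2,1}$.
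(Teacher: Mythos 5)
Your proposal is correct and follows the same route the paper intends: the two identities of Lemma \ref{L:BGHS2-SS4.1-lm1-NLS} form a linear system in $\lambda\|K\phi\|_{L^2}^2$ and $\mu\|\phi\|_{L^2}^2$ whose solution is \eqref{StationaryWave-NLS-eq4}, and your elimination is carried out correctly. You also supply the one ingredient the paper leaves implicit, namely the strict positivity of $\mathcal{H}(\phi)$ for nontrivial $\phi$ via the space-time representation $\mathcal{H}(\phi)=(2\pi)^{n-1}\int\!\!\int\bigl(\sum_l |e^{it\Delta}\check\phi_l|^2\bigr)^2\,dx\,dt$ from Lemma \ref{L:lm2.2-GHT1-NLS}, together with $\|\phi\|_{L^2},\|K\phi\|_{L^2}>0$, which is exactly what is needed to turn the identities into the stated sign conditions.
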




Let us consider the variational problems  \\
1) $\displaystyle \sup_{\|g \|_{L^2}^2 =1} \mathcal{H} (g)$ if $n=2$;  \\
2) $\displaystyle \sup_{\|g \|_{L^2}^2 +\| K g \|_{L^2}^2 =1} \mathcal{H} (g)$ if $3 \leq n \leq 5$;  \\
3) $\displaystyle \sup_{\| K g \|_{L^2}^2 =1} \mathcal{H} (g)$ if $n=6$.  \\
These problems make sense due to the Strichartz estimates
\begin{equation}
\begin{split}
& \mathcal{H} (g) \lesssim 2 \pi \| e^{ i t \Delta } \check{g} \|_{L^4_{t, x}}^4 \lesssim \|g \|_{L^2}^4  \text{ if } n=2 ,  \\
& \mathcal{H} (g) \lesssim (2 \pi)^{n-1} \| e^{ i t \Delta } \check{g} \|_{L^4_{t, x}}^4 \lesssim \|g \|_{L^{2, 1}}^4  \text{ if } 3 \leq n \leq 5 ,  \\
& \mathcal{H} (g) \lesssim (2 \pi)^{5} \| e^{ i t \Delta } \check{g} \|_{L^4_{t, x}}^4 \lesssim \|g \|_{\dot{L}^{2,1}}^4  \text{ if } n=6 .  \\
\end{split}
\end{equation}
Therefore, these variational problems belong to the class which arises from Fourier restriction functionals. 

The Euler-Lagrange equations satisfied by the maximizers of these variational problems read   
\begin{equation}
\begin{split}
& \lambda g = \mathcal{T} (g, g, g) \text{ if } n=2 ,  \\
& \lambda [ g + |K|^2 g ]  = \mathcal{T} (g, g, g) \text{ if } 3 \leq n \leq 5 , \\
& \lambda |K|^2 g = \mathcal{T} (g, g, g) \text{ if } n=6 ,  \\
\end{split}
\end{equation}
where $\lambda$ is the Lagrange multiplier. These three equations should be understood as equations in $L^2$, $H^{-1}$ and $\dot{H}^{-1}$, respectively. Since the maximizers are nonzero, testing the above equations against $g$ gives $\lambda > 0$. We can take $\lambda=1$ by scaling, and the Euler-Lagrange equations become   
\begin{equation}  \label{StationaryWave-NLS-Varia-eq1}
\begin{split}
&  g = \mathcal{T} (g, g, g) \text{ if } n=2 ,  \\
&  g + |K|^2 g   = \mathcal{T} (g, g, g) \text{ if } 3 \leq n \leq 5 , \\
&  |K|^2 g = \mathcal{T} (g, g, g) \text{ if } n=6 .  \\
\end{split}
\end{equation}

\begin{theorem}  \label{StationaryWave-NLS-Varia-th1}
The following variational problems admit nonzero maximizers:  \\
1) $\displaystyle \sup_{\|g \|_{L^2}^2 =1} \mathcal{H} (g)$ if $n=2$;  \\
2) $\displaystyle \sup_{\|g \|_{L^2}^2 +\| K g \|_{L^2}^2 =1} \mathcal{H} (g)$ if $3 \leq n \leq 5$;  \\
3) $\displaystyle \sup_{\|K g \|_{L^2}^2 =1} \mathcal{H} (g)$ if $n=6$.  \\
Furthermore, maximizing sequences are compact modulo the symmetries of the equation. For the case $n=2$, maximizers are given by tensor products of Gaussians.
\end{theorem}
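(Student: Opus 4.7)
My plan is to imitate the strategy used for Theorem \ref{StationaryWave-Varia-th1}. The first step is to use Lemma \ref{L:lm2.2-GHT1-NLS} to rewrite
\[
\mathcal{H}(g)=\mathcal{E}(g,g,g,g) = (2\pi)^{n-1}\int_{\mathbb{R}}\int_{\mathbb{R}^n}\Big(\sum_{l=1}^{d}|e^{it\Delta}\check{g}_l(x)|^{2}\Big)^{2}dx\,dt,
\]
so that each of the three variational problems becomes a sharp Strichartz problem for the $\mathbb{C}^d$-valued propagator $e^{it\Delta}$. With this identification, the Strichartz estimates recalled in the paragraph preceding the theorem ensure that the suprema $I(1)$ are finite; they are also manifestly non-zero on any nontrivial Gaussian input.

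For the sub-critical range $3\le n\le 5$, writing $f=\check{g}$ and $I(A):=\sup_{\|f\|_{L^2}^2+\|\nabla f\|_{L^2}^2=A}\|e^{it\Delta}f\|_{L^4_{t,x}(\mathbb{C}^d)}^4$, I would first verify the scaling relation $I(A)=A^{2}I(1)$ using the symmetries in Lemma \ref{L:symmetry-NLS}. Taking a maximizing sequence $(f^q)$, I would apply a vector-valued profile decomposition in $(H^1)^d$, constructed exactly as in the proof of Theorem \ref{StationaryWave-Varia-th1} by iteratively applying the scalar decomposition component by component and extracting diagonal subsequences, producing profiles $\psi^{k}=(\psi^k_1,\ldots,\psi^k_d)$ with parameters $(t_q^k,x_q^k)$ and a remainder $r_q^{k_0}$ satisfying asymptotic $L^4_{t,x}$, $L^2_x$ and $\dot H^1_x$ orthogonality. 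The Strichartz orthogonality combined with the scaling law then yields
\[
I(1)=\sum_{k}\|e^{it\Delta}\psi^k\|_{L^4_{t,x}}^{4}\le I(1)\sum_{k}\bigl(\|\psi^k\|_{L^2}^{2}+\|\nabla\psi^k\|_{L^2}^{2}\bigr)^{2},
\]
while the $L^2$ and $\dot H^1$ orthogonality give $\sum_k(\|\psi^k\|_{L^2}^{2}+\|\nabla\psi^k\|_{L^2}^{2})\le 1$. These two together force exactly one profile, say $\psi^{1}$, to be non-trivial, which is the sought maximizer and shows compactness modulo symmetries.

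The mass-critical case $n=2$ and the energy-critical case $n=6$ require minor adjustments. In the $n=2$ case the constraint is already scale-invariant for the $L^2$ norm, so the same profile argument produces a maximizer; the identification of extremizers as tensor products of Gaussians follows by reducing to the scalar problem using the $U(d)$ symmetry in Lemma \ref{L:symmetry-NLS} (which lets one assume all components are proportional), and then invoking Foschi-type results for the sharp $L^4_{t,x}$ Strichartz estimate in $\mathbb{R}^{2}$ as cited through \cite{FGH1} and \cite{Shao1}. The hardest part is the energy-critical case $n=6$, where both the functional and the constraint are invariant under the scaling $g\mapsto\lambda^{(3n-2)/4}g(\lambda\cdot)$; the scaling-orthogonality trick above fails since $I(A)=A^{2}I(1)$ is replaced by an exactly scale-invariant identity, and one must instead run a full concentration-compactness argument with profile parameters including a scale $\lambda_q^k$. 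I expect this to be the main obstacle, handled by refining the vector-valued profile decomposition to track scale parameters and then using a Brezis--Lieb type defect-of-compactness identity to again force a single surviving profile.
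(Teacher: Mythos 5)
Your proposal follows essentially the same route as the paper: the paper omits the proof of this theorem, deferring to the parallel Theorem \ref{StationaryWave-Varia-th1}, whose proof is exactly the reduction to a sharp Strichartz problem, the scaling relation $I(A)=A^2I(1)$, and the component-by-component vector-valued profile decomposition with $L^4_{t,x}$, $L^2$ and $\dot H^1$ orthogonality that you describe. Your closing remarks on the energy-critical case $n=6$ actually go beyond the paper, whose written argument for the parallel theorem likewise treats only $n=2$ and $3\le n\le 5$ explicitly.
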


Moreover, by the same procedure as in \cite{BGHS2} we have the following decay properties for the solutions of the Euler-Lagrange equations:

\begin{proposition}
1) If $n = 2$, a solution $ g \in L^2$ of $g = \mathcal{T} (g,g,g) $ (where $\mathcal{T}$ is as given in \eqref{def-T-NLS})
belongs to $L^{2,s}$ for all $s > 0$. \\
2) If $3 \leq n \leq 5$, a solution $ g \in L^{2,1}$ of $g+|K|^2 g=\mathcal{T}(g,g,g)$ (where $\mathcal{T}$ is as given in \eqref{def-T-NLS}) belongs to $L^{2,s}$ for all $s>0$.
\end{proposition}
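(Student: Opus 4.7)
Both parts are proved by a bootstrap argument based on Proposition \ref{L:BGHS2-prop5-NLS}, with the two cases differing sharply in how decay is gained at each iteration.

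For case (2), I would rewrite the Euler--Lagrange equation as $g=(1+|K|^2)^{-1}\mathcal{T}(g,g,g)$. The plan is to iterate: given $g\in L^{2,l}_n$ with $l\ge\max(1,(n-2)/2)$, Proposition \ref{L:BGHS2-prop5-NLS}(2) yields $\mathcal{T}(g,g,g)\in L^{2,l}_n$, and multiplication by $(1+|K|^2)^{-1}$ gains two orders, producing $g\in L^{2,l+2}_n$. Starting from the hypothesis $g\in L^{2,1}_n$ (and, for $n=5$, first doing a short initial step to reach $l=3/2$ via an ad-hoc estimate combining the equation with the Strichartz bound $\mathcal{H}(g)\lesssim\|g\|_{L^{2,1}}^4$), iterating $\lceil s/2\rceil$ times yields $g\in L^{2,s}_n$ for any $s>0$.

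For case (1), the equation is simply $g=\mathcal{T}(g,g,g)$ and there is no $(1+|K|^2)^{-1}$ factor to exploit — Proposition \ref{L:BGHS2-prop5-NLS}(2) merely preserves $L^{2,l}$. The plan is to bootstrap instead in the Hermite picture using Lemma \ref{L:lm2.3-GHT1-NLS}. Writing $\check{g}=\sum_\ell \Pi_\ell\check{g}$ with $\Pi_\ell$ the spectral projector of $H=-\Delta+|x|^2$, Fourier-inverting the equation turns it into an algebraic identity constrained by $\ell_1+\ell_3=\ell_2+\ell_4$. Applying $H^{s/2}$ to both sides, using that Hermite projectors commute with $H$, and invoking a Leibniz-type estimate on the resonant Hermite trilinear product, I would aim to control $\|H^{s/2}\check{g}\|_{L^2}$ by products of lower-Hermite-regularity norms of $\check{g}$. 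Closing the bootstrap then yields $\check{g}\in\mathrm{Dom}(H^{s/2})$ for all $s$, equivalently $g\in L^{2,s}_n$ (in fact $\check{g}\in\mathcal{S}(\mathbb{R}^2)$).

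The main obstacle is case (1): because the equation offers no direct weight gain per iteration, the whole argument hinges on proving the Hermite Leibniz inequality. The resonance constraint $\ell_1+\ell_3=\ell_2+\ell_4$ is precisely what makes the inequality tractable — it restricts the summation in Lemma \ref{L:lm2.3-GHT1-NLS} enough that an a priori bound on $\|H^{s/2}\check{g}\|_{L^2}$ can be closed by a perturbative absorption of the top-order term into the left-hand side using the multilinear $L^\infty$-endpoint estimate from Proposition \ref{L:BGHS2-prop5-NLS}(5). This is exactly where the quoted \emph{``by the same procedure as in [BGHS2]''} does nontrivial work.
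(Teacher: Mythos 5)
The paper offers no argument for this proposition beyond the phrase ``by the same procedure as in \cite{BGHS2}'', so the benchmark is that procedure. For part (2) your plan is essentially the right one and matches it: rewrite the equation as $g=(1+|K|^2)^{-1}\mathcal{T}(g,g,g)$, use Proposition \ref{L:BGHS2-prop5-NLS}(2) to preserve the weight through $\mathcal{T}$, and gain two orders from the elliptic factor at each pass. The only soft spot is $n=5$, where $(n-2)/2=3/2>1$ so the very first application of the boundedness result is not available from the hypothesis $g\in L^{2,1}$; you flag this but your proposed fix (``an ad-hoc estimate combining the equation with the Strichartz bound $\mathcal{H}(g)\lesssim\|g\|_{L^{2,1}}^4$'') is not an argument --- by duality that bound only returns $\mathcal{T}(g,g,g)\in L^{2,-1}$, hence $g\in L^{2,1}$, i.e.\ no gain. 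Some genuine input (e.g.\ the weighted distribution of $\langle K\rangle$ onto the unconjugated factors using $|K|^2+|K_2|^2=|K_1|^2+|K_3|^2$ on the resonant set, or an interpolation with item (1) of Proposition \ref{L:BGHS2-prop5-NLS}) is needed to launch the iteration at $n=5$.

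Part (1) is where the real gap is. Your scheme --- a Hermite Leibniz rule for the resonant trilinear product followed by ``perturbative absorption of the top-order term into the left-hand side'' --- cannot close. First, there is no small parameter: $g$ is an arbitrary $L^2$ solution, so the top-order term comes with the constant $C\|g\|_{L^2}^2$, which cannot be absorbed; second, $\|H^{s/2}\check{g}\|_{L^2}$ is not known to be finite a priori, so one cannot subtract it from both sides without a truncation/regularization scheme that you do not describe; third, the endpoint estimate you invoke (item (5) of Proposition \ref{L:BGHS2-prop5-NLS}, which for $n=2$ requires pointwise decay $\langle K\rangle^{-1-}$) is not available from $g\in L^2$. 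A Leibniz rule that merely redistributes the weight among the factors yields $\|g\|_{L^{2,s}}\lesssim\|g\|_{L^{2,s}}\|g\|_{L^2}^2$, which is vacuous. The ingredient actually used in \cite{FGH1} and \cite{GHTho1} (and invoked by \cite{BGHS2}) is a genuine \emph{smoothing} estimate special to $n=2$: the operator \eqref{def-T-NLS} gains one full weight/harmonic-oscillator derivative, $\mathcal{T}:(L^{2,s})^3\to L^{2,s+1}$ (equivalently $(\mathcal{H}^s)^3\to\mathcal{H}^{s+1}$ in the notation of Lemma \ref{L:lm2.3-GHT1-NLS}), proved via bilinear eigenfunction/bilinear Strichartz refinements on the resonant set $\ell_1+\ell_3=\ell_2+\ell_4$. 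Once that estimate is in hand, $g=\mathcal{T}(g,g,g)$ bootstraps from $L^2$ to $\bigcap_{s>0}L^{2,s}$ with no absorption at all; without it, your argument for part (1) does not get off the ground.
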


\subsection{Brief Dynamics of \eqref{CR-NLS} on the eigenspaces of $H$ when $n=2$}
\label{S:Dynamics-Eigen-NLS}

In this section, we consider the equation \eqref{CR-NLS} for $n= 2$. The special Hermite functions are defined as 
\begin{equation*}
\psi_{l, m} = \frac{1}{\sqrt{\pi l! m!}} (a_d^\star)^l (a_g^\star)^m e^{- z\overline{z}/2}
\end{equation*}
and for $l+m$ even we set
\begin{equation*}
\varphi_{l, m} = \psi_{\frac{l+m}{2}, \frac{l-m}{2}} . 
\end{equation*}
Here
\begin{equation*}
a_d^\star = \frac{1}{2} (z - 2 \partial_{\overline{z}}) , \  a_g^\star = \frac{1}{2} (\overline{z} - 2 \partial_z) , \  \partial_z = \frac{1}{2} (\partial_{x_1}- i \partial_{x_2}) , \  \partial_{\overline{z}} = \frac{1}{2} (\partial_{x_1}+ i \partial_{x_2}) . 
\end{equation*}
There holds

\begin{proposition}
Let $H= - \Delta + |x|^2$, $L= i ( x \times \nabla) $. \\
1) The family $(\psi_{l, m})_{l \geq 0,  m \geq 0}$ is an $L^2$-normalized Hilbertian basis of the space $L^2 (\mathbb{R}^2)$ such that
\begin{equation*}
H \psi_{l, m} = 2 (l+m +1) \psi_{n, m} , \  L \psi_{l, m} = (l-m) \psi_{l, m} , \   \hat{\psi}_{l, m} = e^{-i (l+m) \pi/2 } \psi_{l, m} . 
\end{equation*}
2) The family $(\varphi_{l, m})_{l \geq 0, -l \leq m \leq l , l+m \text{ even}}$ is an $L^2$-normalized Hilbertian basis of the space $L^2 (\mathbb{R}^2)$ such that
\begin{equation*}
H \varphi_{l, m} = 2 (l +1) \varphi_{l, m} , \  L \varphi_{l, m} = m \varphi_{l, m} , \   \hat{\varphi}_{l, m} = e^{-i l \pi/2 } \varphi_{l, m} . 
\end{equation*}
\end{proposition}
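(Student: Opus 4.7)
The proof rests on the algebra of ladder operators for the 2D isotropic harmonic oscillator in complex coordinates $z = x_1 + i x_2$. The plan is to introduce the annihilation operators $a_d = \frac{1}{2}(\bar z + 2\partial_z)$ and $a_g = \frac{1}{2}(z + 2\partial_{\bar z})$, which are the formal $L^2$-adjoints of $a_d^\star$ and $a_g^\star$. A direct computation using $[\partial_z, z] = [\partial_{\bar z}, \bar z] = 1$ yields the canonical commutation relations $[a_d, a_d^\star] = [a_g, a_g^\star] = 1$, with all other commutators among $\{a_d, a_d^\star, a_g, a_g^\star\}$ vanishing. Next, expressing $H = -4 \partial_z \partial_{\bar z} + z \bar z$ and $L = i(x_1 \partial_{x_2} - x_2 \partial_{x_1})$ in terms of $z, \bar z, \partial_z, \partial_{\bar z}$, I would verify the algebraic identities
\[
H = 2(a_d^\star a_d + a_g^\star a_g + 1), \qquad L = a_d^\star a_d - a_g^\star a_g,
\]
with the sign in $L$ fixed so as to agree with the orientation convention in $x \times \nabla$.

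Since $\psi_{0,0} = \frac{1}{\sqrt{\pi}} e^{-|z|^2/2}$ is annihilated by both $a_d$ and $a_g$, a standard induction using the commutation relations shows $a_d^\star a_d \, \psi_{l,m} = l\, \psi_{l,m}$ and $a_g^\star a_g \, \psi_{l,m} = m\, \psi_{l,m}$, whence the stated eigenvalue relations for $H$ and $L$ follow. The normalization constant $\frac{1}{\sqrt{\pi\, l!\, m!}}$ is then a consequence of $\|(a_d^\star)^l (a_g^\star)^m \psi_{0,0}\|_{L^2}^2 = l!\, m!$, which itself comes from iterated application of $\|a_d^\star \phi\|^2 = \|a_d \phi\|^2 + \|\phi\|^2$ at each $a_d$-level (and analogously for $a_g$).

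For the Hilbertian basis property, I would appeal to the tensor product structure: after the change to real variables $(x_1, x_2)$, each $\psi_{l,m}$ expands as a finite linear combination of products of one-dimensional Hermite functions, whose totality is an orthonormal basis of $L^2(\mathbb{R}^2)$ by the classical Hermite expansion theorem. Alternatively, essential self-adjointness of $H$ with compact resolvent forces its eigenspaces to exhaust $L^2(\mathbb{R}^2)$, and the degeneracy of the $(N+1)$-st eigenvalue $2(N+1)$ is precisely $N+1 = \#\{(l,m): l+m = N\}$, matching the number of $\psi_{l,m}$ at that level. For the Fourier-transform identity, the standard intertwinings $\mathcal{F} \circ M_{x_j} \circ \mathcal{F}^{-1}$ and $\mathcal{F} \circ \partial_{x_j} \circ \mathcal{F}^{-1}$ translate (under the paper's Fourier convention) into $\mathcal{F} a_d^\star \mathcal{F}^{-1} = -i\, a_d^\star$ and analogously for $a_g^\star$. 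Since $\psi_{0,0}$ is a normalized Gaussian and therefore fixed by $\mathcal{F}$, iterating the intertwining $l + m$ times yields $\hat\psi_{l,m} = (-i)^{l+m} \psi_{l,m} = e^{-i(l+m)\pi/2}\psi_{l,m}$.

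Part 2) on the basis $(\varphi_{l,m})$ is immediate from the reindexing $\varphi_{l,m} = \psi_{(l+m)/2,(l-m)/2}$: the $H$-eigenvalue becomes $2(l+1)$, the $L$-eigenvalue becomes $m$, and the Fourier eigenvalue simplifies to $e^{-il\pi/2}$; the completeness of the subfamily follows from that of $(\psi_{l,m})$ together with the parity condition $l+m$ even. The main obstacle is not conceptual but bookkeeping: pinning down the sign in the formula for $L$ and the phase convention in the Fourier intertwining so that the three eigenvalue statements match the proposition exactly. Once those conventions are nailed down, the skeleton above runs mechanically.
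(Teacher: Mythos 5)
Your proposal is correct, and it is the standard argument. Note that the paper does not actually prove this proposition: immediately after the statement it defers entirely to \cite{GHTho1}, Section 5, so there is no in-paper proof to compare against. Your ladder-operator derivation is exactly the computation that underlies that reference: the identities $H = 2(a_d^\star a_d + a_g^\star a_g + 1)$ and $L = \pm(a_d^\star a_d - a_g^\star a_g)$ check out (with $\Delta = 4\partial_z\partial_{\bar z}$, $|x|^2 = z\bar z$), the vacuum $\psi_{0,0}$ is indeed annihilated by $a_d$ and $a_g$, the normalization $\|(a_d^\star)^l(a_g^\star)^m\psi_{0,0}\|^2 = l!\,m!$ follows from the commutation relations, and the Fourier intertwining $\mathcal{F}a_d^\star\mathcal{F}^{-1} = -i\,a_d^\star$ gives the phase $e^{-i(l+m)\pi/2}$. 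The two points you flag as bookkeeping are genuinely the only delicate ones: with the literal reading $L = i(x_1\partial_{x_2} - x_2\partial_{x_1})$ one computes $L = a_g^\star a_g - a_d^\star a_d$, so matching the stated eigenvalue $l-m$ requires the opposite orientation of the cross product (or equivalently swapping the roles of $a_d$ and $a_g$), and the identity $\hat\psi_{0,0} = \psi_{0,0}$ presupposes the symmetric Fourier normalization, which the paper never pins down on $\mathbb{R}^n$. One cosmetic remark: the family $(\varphi_{l,m})$ is not a subfamily of $(\psi_{l,m})$ but a bijective reindexing of the whole family (the parity and range conditions make $(l,m)\mapsto(\tfrac{l+m}{2},\tfrac{l-m}{2})$ a bijection onto $\mathbb{Z}_{\geq 0}^2$), which is why completeness transfers with nothing left over.
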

The definitions and properties above are presented in detail and explained in \cite{GHTho1}, Section 5 so we omit the details here. Define the eigenspace $E_l = \text{span} (\varphi_{l, m},  \ l \geq 0, \ -l \leq m \leq l , \ l+m \text{ even}) $, so $E_l$ is the $l$-th eigenspace of $H = -\Delta + |x|^2$, associated to the eigenvalue $2l+2$. Let $u \in L^2 ( \mathbb{R}^2)$ so $u$ can be written as
\begin{equation*}
u = \sum_{l=0}^\infty u_l , \ u_l = \sum_{m=-l}^l c_{l, m} \varphi_{l, m} \in E_l 
\end{equation*}
with the convention $c_{l, m} =0$ if $l+m$ is odd. For $u \in  (L^2 ( \mathbb{R}^2))^d$, we can write
\begin{equation*}
u_j = \sum_{l=0}^\infty u_{j, l} , \ u_{j, l} = \sum_{m=-l}^l c_{j, l, m} \varphi_{l, m} \in E_l , \ j = 1, 2, \cdots , d . 
\end{equation*}
We can describe the trilinear operator $\mathcal{T}$ in \eqref{def-T-NLS} and the Hamiltonian $\mathcal{E}$ in \eqref{D:E-NLS} with the basis of special Hermite functions:
\begin{proposition}
Let $e_j$ being the $d$-dimensional vector with only the $j$-th element equal to $1$ and all other elements being $0$. There holds:  
$$\mathcal{E} ( \varphi_{l_1, m_1} e_{j_1} , \varphi_{l_2, m_2} e_{j_2} , \varphi_{l_3, m_3} e_{j_3} , \varphi_{l_4, m_4} e_{j_4} ) = \pi^2 ( \int_{\mathbb{R}^2} \varphi_{l_1, m_1} \varphi_{l_2, m_2}   \overline{\varphi_{l_3, m_3}} \overline{\varphi_{l_4, m_4}} dx   )  \mathbf{1}_{\substack{ j_1 = j_2, j_3= j_4 , \\  l_1 + l_2 = l_3 + l_4 , \\ m_1 + m_2 = m_3 + m_4}} $$
and
$$\mathcal{T} ( \varphi_{l_1, m_1} e_{j_1} , \varphi_{l_2, m_2} e_{j_2} , \varphi_{l_3, m_3} e_{j_3}   ) =  \pi^2 ( \int_{\mathbb{R}^2} \varphi_{l_1, m_1} \varphi_{l_2, m_2} \overline{\varphi_{l_3, m_3}} \overline{\varphi_{l_4, m_4}} dx   ) \varphi_{l_4, m_4} e_{j_4} $$
with $l_4 = l_1 + l_2 - l_3$, $m_4 = m_1 + m_2 - m_3$, $j_4= j_3$. In fact, $\int_{\mathbb{R}^2} \varphi_{l_1, m_1}  \varphi_{l_2, m_2} \overline{ \varphi_{l_3, m_3} } \overline{ \varphi_{l_4, m_4} } dx =0 $ if $m_1 + m_2 \neq m_3 + m_4$. 
\end{proposition}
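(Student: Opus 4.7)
The plan is to substitute the tensor-product inputs into the spectral representation of Lemma~\ref{L:lm2.3-GHT1-NLS} and to exploit the multiple eigenfunction properties of the special Hermite basis. Starting with \eqref{E:lm2.3-GHT1-NLS-eq1} applied to $f = \varphi_{l_1,m_1}e_{j_1}$, $g = \varphi_{l_2,m_2}e_{j_2}$, $h = \varphi_{l_3,m_3}e_{j_3}$, $p = \varphi_{l_4,m_4}e_{j_4}$, each vector-component sum $\sum_l(\cdot)_l\,\overline{(\cdot)}_l$ produces a single nonvanishing term only when the paired coordinate indices agree; this gives the Kronecker factors $\delta_{j_1 j_2}\delta_{j_3 j_4}$ that appear in the indicator.

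Next I would collapse the Hermite index sum. Since $H\varphi_{l,m}=2(l+1)\varphi_{l,m}$, the eigenspace projector satisfies $\Pi_{\ell}\varphi_{l,m}=\delta_{\ell,l}\varphi_{l,m}$, so only the single quadruple $(\ell_1,\ell_2,\ell_3,\ell_4)=(l_1,l_2,l_3,l_4)$ survives, with the residual constraint from Lemma~\ref{L:lm2.3-GHT1-NLS} fixing the resonance relation between the $l_j$'s. The four applications of $\mathcal{F}^{-1}$ on the inputs, together with $\hat{\varphi}_{l,m}=e^{-il\pi/2}\varphi_{l,m}$, contribute a phase $e^{i(l_1-l_2+l_3-l_4)\pi/2}$ that collapses to $1$ on the resonant set. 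For $n=2$ the overall prefactor $(2\pi)^n/4=\pi^2$ is precisely the constant claimed, and the residual spatial integral is the asserted one.

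The selection rule on the angular quantum numbers $m_j$ follows from rotation invariance: by Lemma~\ref{L:symmetry-NLS} the form $\mathcal{E}$ is invariant under the $SO(2)$ action on $\mathbb{R}^2$, and since $L\varphi_{l,m}=m\varphi_{l,m}$, the spatial integral of a fourfold product of $\varphi$'s vanishes off the balance of angular momenta. Equivalently, writing $\varphi_{l,m}(r,\theta)=R_{l,m}(r)e^{im\theta}$ and integrating in $\theta$ produces a Kronecker delta enforcing the $m$-resonance directly. For the $\mathcal{T}$ formula I would repeat the identical reduction on \eqref{E:lm2.3-GHT1-NLS-eq2}: the collapsed sum yields $\pi^{2}\,\mathcal{F}\bigl\{\Pi_{l_4}\bigl[\varphi_{l_1,m_1}\overline{\varphi_{l_2,m_2}}\varphi_{l_3,m_3}\bigr]\bigr\}e_{j_3}$; expanding the projection in the $\varphi$-basis picks out the single summand $\varphi_{l_4,m_4}$ with $m_4$ determined by the angular-momentum rule, with coefficient equal to the spatial integral of the corresponding fourfold $\varphi$-product; the final Fourier transform returns $\varphi_{l_4,m_4}$ up to a phase that combines with the accumulated $\check{\cdot}$ phases to $1$ on resonance.

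The computation is essentially mechanical once Lemma~\ref{L:lm2.3-GHT1-NLS} is in hand. The main obstacle is bookkeeping: tracking the four $e^{il_j\pi/2}$ phases from the inverse Fourier transforms and the final $e^{-il_4\pi/2}$ from the forward transform, and matching the index-pairing convention so that the stated resonance conditions $l_1+l_2=l_3+l_4$ and $m_1+m_2=m_3+m_4$ appear in exactly the form written (rather than in the $l_1+l_3=l_2+l_4$ form that falls out of Lemma~\ref{L:lm2.3-GHT1-NLS}). Beyond this, no new analytic ingredient is needed beyond the Hermite eigenproperties already recorded in the proposition immediately preceding the statement.
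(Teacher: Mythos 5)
Your argument is correct and follows essentially the same route as the paper: both reduce to the Hermite/spectral representations of $\mathcal{E}$ and $\mathcal{T}$ from the preceding lemmas, collapse the component sums to get $\delta_{j_1 j_2}\delta_{j_3 j_4}$, use the eigenfunction properties $H\varphi_{l,m}=2(l+1)\varphi_{l,m}$ and $\hat{\varphi}_{l,m}=e^{-il\pi/2}\varphi_{l,m}$ to isolate the single resonant quadruple with constant $\pi^2$, and identify $\mathcal{T}$ with $\mathcal{E}(\cdot)\,\varphi_{l_4,m_4}e_{j_4}$ by expanding the surviving projection in the $\varphi$-basis. The only cosmetic difference is that the paper evaluates the time integral $2\pi\int_{-\pi/4}^{\pi/4}e^{-2i(l_1+l_2-l_3-l_4)t}\,dt=\pi^2$ directly while you use the pre-summed projector form with prefactor $(2\pi)^n/4$, and you supply the angular-momentum argument for the $m$-selection rule explicitly where the paper merely asserts it.
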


\begin{proof}

Let us first state the following lemma, whose proof is similar to the ones for Lemma 2.1, 2.2, 2.3 in \cite{GHTho1}. 

\begin{lemma} \label{L:lm2.2-2.3-GHT1-NLS}
The quantity $\mathcal{E}$ satisfies
\begin{equation} \label{E:lm2.2-2.3-GHT1-NLS-eq1}
\mathcal{E} (f, g, h, p) = (2\pi)^{n-1}  \int_{\mathbb{R}} \int_{\mathbb{R}^n}  \big[ \sum_{l=1}^d e^{it \Delta} f_l (x) \overline{e^{it \Delta} g_l (x)} \big] \big[ \sum_{m =1}^d e^{it \Delta} h_m (x) \overline{e^{it \Delta}    p_m (x)} \big] dx dt . 
\end{equation}
\begin{equation} \label{E:lm2.2-2.3-GHT1-NLS-eq2}
\mathcal{E} (f, g, h, p) = (2 \pi)^{n-1}   \int^{\pi/4}_{-\pi/4} \int_{\mathbb{R}^n}  \big[ \sum_{l=1}^d e^{-itH} f_l \overline{e^{-itH} g_l} \big] \big[ \sum_{m =1}^d e^{-itH} h_m \overline{ e^{-itH} ( p_m )} \big] (t, x) dx dt  ,
\end{equation}
\begin{equation} \label{E:lm2.2-2.3-GHT1-NLS-eq5}
\begin{split}
& \mathcal{E} (f, g, h, p) =    \frac{(2\pi)^{n}}{4}  \sum_{\substack{\ell_1 , \ell_2 , \ell_3 , \ell_4 \in \mathbb{Z}_{\geq 0} \\ \ell_1 + \ell_3 = \ell_2 + \ell_4}}  \int_{\mathbb{R}^n}  \big[ \sum_{l=1}^d  \Pi_{\ell_1} f_l (x)  \overline{\Pi_{\ell_2}  g_l (x) } \big] \big[  \sum_{m =1}^d   \Pi_{\ell_3}  h_m (x) \overline{\Pi_{\ell_4}  p_m (x)} \big] dx  . 
\end{split}
\end{equation}
Consequently,  
\begin{equation} \label{E:lm2.2-2.3-GHT1-NLS-eq3}
\mathcal{T}_j (f, g, h) = (2\pi)^{n-1} \mathcal{F} \Big\{   \int_{\mathbb{R}}        e^{-it \Delta}   \big[   \big( \sum_{l=1}^d  e^{it \Delta} f_l (x) \overline{e^{it \Delta} g_l (x)} \big) e^{it \Delta} h_j (x) \big]    dt \Big\} . 
\end{equation}
\begin{equation} \label{E:lm2.2-2.3-GHT1-NLS-eq4}
\mathcal{T}_j (f, g, h) = (2 \pi)^{n-1} \mathcal{F}  \Big\{   \int^{\pi/4}_{-\pi/4}     e^{it H} \big[  \big( \sum_{l=1}^d e^{-itH} f_l \overline{e^{-itH} g_l} \big) e^{-itH} h_j  (t, x) \big]   dt  \Big\} ,
\end{equation}
\begin{equation} \label{E:lm2.2-2.3-GHT1-NLS-eq6}
\begin{split}
& \mathcal{T}_j (f, g, h) =   \frac{(2\pi)^{n}}{4}  \sum_{\substack{\ell_1 , \ell_2 , \ell_3 , \ell_4 \in \mathbb{Z}_{\geq 0} \\ \ell_1 + \ell_3 = \ell_2 + \ell_4}}   \mathcal{F} \Big\{  \Pi_{\ell_4}   \Big[  \big[ \sum_{l=1}^d  \Pi_{\ell_1} f_l (x)  \overline{ \Pi_{\ell_2}  g_l (x) } \big] \Pi_{\ell_3} h_j (x) \Big] \Big\}    . 
\end{split}
\end{equation}
\end{lemma}

We claim that
\begin{equation*}
\mathcal{T} ( \varphi_{l_1, m_1} e_{j_1} , \varphi_{l_2, m_2} e_{j_2} , \varphi_{l_3, m_3} e_{j_3}   )  =  \mathcal{E} ( \varphi_{l_1, m_1} e_{j_1} , \varphi_{l_2, m_2} e_{j_2} , \varphi_{l_3, m_3} e_{j_3} , \varphi_{l_4, m_4} e_{j_4} ) \varphi_{l_4, m_4} e_{j_4} 
\end{equation*}
with $l_4 = l_1 + l_2 - l_3$, $m_4 = m_1 + m_2 - m_3$, $j_4= j_3$. Indeed, since $\varphi_{l, m}$ is an eigenfunction of $H$, $\mathcal{T} ( \varphi_{l_1, m_1} e_{j_1} , \varphi_{l_2, m_2} e_{j_2} , \varphi_{l_3, m_3} e_{j_3}   ) $ must be an eigenfunction of $H$ and be co-linear to $\varphi_{l_4, m_4} e_{j_4} $ due to Lemma \ref{L:lm2.2-2.3-GHT1-NLS}. The definition of $\mathcal{E}$ then gives the claim.

Moreover, by Lemma \ref{L:lm2.2-2.3-GHT1-NLS} we have
\begin{equation*}
\mathcal{E} ( \varphi_{l_1, m_1} e_{j_1} , \varphi_{l_2, m_2} e_{j_2} , \varphi_{l_3, m_3} e_{j_3} , \varphi_{l_4, m_4} e_{j_4} ) = 2 \pi \int_{-\pi/4}^{\pi/4} e^{-2i (l_1+l_2-l_3-l_4)t} dt \int_{\mathbb{R}^2} \varphi_{l_1, m_1} \varphi_{l_2, m_2}   \overline{\varphi_{l_3, m_3}} \overline{\varphi_{l_4, m_4}} dx
\end{equation*}
which yields
\begin{equation*}
\mathcal{E} ( \varphi_{l_1, m_1} e_{j_1} , \varphi_{l_2, m_2} e_{j_2} , \varphi_{l_3, m_3} e_{j_3} , \varphi_{l_4, m_4} e_{j_4} ) = \pi^2  \int_{\mathbb{R}^2} \varphi_{l_1, m_1} \varphi_{l_2, m_2}   \overline{\varphi_{l_3, m_3}} \overline{\varphi_{l_4, m_4}} dx
\end{equation*}
for $l_4 = l_1 + l_2 - l_3$, $m_4 = m_1 + m_2 - m_3$, $j_1= j_2$, $j_3= j_4$, and zero otherwise.
\end{proof}
Therefore, expanding $f = f(K) \in (L^2(\mathbb{R}^2))^d$ in the basis of special Hermite functions, the CR equation \eqref{CR-NLS} is equivalent to
\begin{equation}
i \dot{u}_l = \sum_{\substack{l_1, l_2, l_3 \geq 0 \\ l_1 + l_2 - l_3  = l} } \mathcal{T} (u_{l_1}, u_{l_2}, u_{l_3} ) ,
\end{equation}
or, for $l \geq 0$ and $-l \leq m \leq l$, 
\begin{equation}
i \dot{c}_{j, l, m} = \pi^2  \sum_{\substack{l_1, l_2, l_3 \geq 0 \\ l_1 + l_2 - l_3  = l} }  \sum_{\substack{ -l_k \leq m_k \leq l_k, k =1,2, 3 \\  m_1 + m_2 - m_3 = m \\ j_1 = j_2, j_3 = j } }   ( \int_{\mathbb{R}^2} \varphi_{l_1, m_1} \varphi_{l_2, m_2} \overline{\varphi_{l_3, m_3}} \overline{\varphi_{l, m}} dx   )  c_{j_1, l_1, m_1} c_{j_2, l_2, m_2} \overline{c_{j_3, l_3, m_3}}  .
\end{equation}


\subsubsection{Dynamics on $E_0$} 
The eigenspace $E_0$ is generated by the Gaussian $\varphi_{0, 0} (K) = \frac{1}{\sqrt{\pi}} e^{-\frac{1}{2} |K|^2}$. For the data $g_j (t=0)= c_{j, 0} \varphi_{0, 0}$, $j= 1, \cdots, d$, the solution $g_j (t)= c_{j} (t) \varphi_{0, 0}$ is given by $c_j (t) =  e^{- i \frac{\pi}{2} |c_{j, 0}|^2 t} c_{j, 0}$. 

\subsubsection{Dynamics on $E_1$} 
We write $g_j = c_{j, 1} \varphi_{1, 1} + c_{j, -1} \varphi_{1, -1} $, $j= 1, \cdots , d$. Then $\mathcal{E} (g) = \pi^2  \int_{\mathbb{R}^n} |g|^4$. Using Lemma \ref{L:lm2.3-GHT1-NLS}, we compute
\begin{equation}
\begin{split}
\mathcal{E} (u) 
& = \pi^2 \int_{\mathbb{R}^2} (\sum_{j=1}^d |c_{j, 1} \varphi_{1, 1} + c_{j, -1} \varphi_{1, -1}|^2)^2 \\
& = \pi^2  \int_{\mathbb{R}^2} \big\{  \sum_{j=1}^d |c_{j, 1} \varphi_{1, 1}|^4 +  \sum_{j=1}^d |c_{j, -1} \varphi_{1, -1}|^4  \\
& \quad + 2 \sum_{j, k=1}^d | c_{j, 1} \varphi_{1, 1} |^2 | c_{k, -1} \varphi_{1, -1} |^2  + 2 \sum_{j, k=1}^d c_{j, 1} \overline{c}_{k, 1}  | \varphi_{1, 1} |^2 c_{j, -1} \overline{c}_{k, -1}  | \varphi_{1, -1} |^2   \big\}  \\
& =   \frac{\pi}{4} \sum_{j=1}^d |c_{j, 1}  |^4 + \frac{\pi}{4} \sum_{j=1}^d |c_{j, -1}  |^4   +  \frac{\pi}{2} \sum_{j, k=1}^d | c_{j, 1}   |^2 | c_{k, -1}   |^2  + \frac{\pi}{2} \sum_{j, k=1}^d c_{j, 1} \overline{c}_{k, 1}  c_{j, -1} \overline{c}_{k, -1}  .  \\
\end{split}
\end{equation}
The CR equation $-i \partial_t g = \frac{1}{2} \nabla_{\overline{g}} \mathcal{E} (g) $ becomes ($j, k=1, \cdots , d$)
\[ \begin{cases} 
      i \dot{c}_{j, 1}= - \big\{ \frac{\pi}{4}  |c_{j, 1}|^2 c_{j, 1} + \frac{\pi}{2} |c_{j, -1}|^2 c_{j, 1}  + \frac{\pi}{4} \sum_{k \neq j}  | c_{k, -1}   |^2  c_{j, 1}     +  \frac{\pi}{4} \sum_{k \neq j}  c_{k, 1}  c_{k, -1} \overline{c}_{j, -1} \big\}\\
      i \dot{c}_{j, -1} = - \big\{  \frac{\pi}{4} |c_{j, -1}|^2  c_{j, -1} +  \frac{\pi}{2} |c_{j, 1}|^2 c_{j, -1}  + \frac{\pi}{4} \sum_{k \neq j} | c_{k, 1}   |^2   c_{j, -1}  +  \frac{\pi}{4} \sum_{k \neq j}   c_{k, -1} c_{k, 1}  \overline{c}_{j, 1}   \big\}
   \end{cases}
\]
This system is in general quite complicated. We present some of its solutions:
\vskip 0.5cm
\begin{flushleft}
1) $c_{j, 1} = c_{j, 1}^0 \exp \big\{  it (\frac{\pi}{4}  |c_{j, 1}^0|^2 c_{j, 1}^0     ) \big\}  $, $c_{j, -1} =0$ for all $j =1, 2, \cdots , d$.
\vskip 0.5cm
2) $c_{j, -1} = c_{j, -1}^0 \exp \big\{  it (\frac{\pi}{4}  |c_{j, -1}^0|^2 c_{j, -1}^0   ) \big\}  $, $c_{j, 1} =0$ for all $j =1, 2, \cdots , d$.
\vskip 0.5cm
3) For one $j \in \{ 1, 2, \cdots , d \}$, $c_{j, 1} = c_{j, 1}^0 \exp \big\{  it (\frac{\pi}{4}  |c_{j, 1}^0|^2 c_{j, 1}^0 + \frac{\pi}{2} |c_{j, -1}^0|^2 c_{j, 1}^0   ) \big\}  $, $c_{j, -1} = c_{j, -1}^0 \exp \big\{  it (\frac{\pi}{4}  |c_{j, -1}^0|^2 c_{j, -1}^0 + \frac{\pi}{2} |c_{j, 1}^0|^2 c_{j, -1}^0   ) \big\} $, $c_{k, 1} = c_{k, -1} =0$ for all $k \neq j$.
\vskip 0.5cm
4) Let $\overline{c}_{j, -1} = \lambda_j c_{j, 1}$ for all $j =1, 2, \cdots , d$, with each $\lambda_j \in \mathbb{C}$ satisfies $|\lambda_j| =1$. This leads to 
\begin{equation}
\begin{split}
& c_{j, 1} = c_{j, 1}^0 \exp \big\{  it (\frac{\pi}{4}  |c_{j, 1}^0|^2   + \frac{\pi}{2} |c_{j, -1}^0|^2    + \frac{\pi}{4} \sum_{k \neq j}  | c_{k, -1}^0   |^2   +  \frac{\pi}{4} \sum_{k \neq j}  \overline{\lambda_k}  |c_{k, 1}^0   |^2     ) \big\}  ,  \\
& c_{j, -1}  = c_{j, -1}^0 \exp \big\{  it (\frac{\pi}{4}  |c_{j, -1}^0|^2   + \frac{\pi}{2} |c_{j, 1}^0|^2     + \frac{\pi}{4} \sum_{k \neq j} | c_{k, 1}^0   |^2    +  \frac{\pi}{4} \sum_{k \neq j}^0   \overline{\lambda_k}   |c_{k, 1}^0|^2    ) \big\}  .  \\
\end{split}
\end{equation}
\vskip 0.5cm
5) For a pair of $(j, k)$, $j, k \in  \{ 1, 2, \cdots , d \}$, $j \neq k$, let $c_{j, 1} = \lambda c_{k, 1}$, $c_{j, -1} = \mu c_{k, -1}$ with each $\lambda, \mu \in \mathbb{C}$ satisfies $|\lambda| = |\mu| =1$, $\lambda = \overline{\mu}$. Let $c_{l, 1} = c_{l, -1} =0$ for all $l \neq j, k$, $l \in \{ 1, 2, \cdots , d \}$. This leads to 
\begin{equation}
\begin{split}
& c_{j, 1} = c_{j, 1}^0 \exp \big\{  it (\frac{\pi}{4}  |c_{j, 1}^0|^2   + \frac{\pi}{2} |c_{j, -1}^0|^2    + \frac{\pi}{4}   | c_{k, -1}^0   |^2   +  \frac{\pi}{4} \lambda \mu  |c_{k, 1}^0   |^2     ) \big\}  ,  \\
& c_{j, -1}  = c_{j, -1}^0 \exp \big\{  it (\frac{\pi}{4}  |c_{j, -1}^0|^2   + \frac{\pi}{2} |c_{j, 1}^0|^2     + \frac{\pi}{4}  | c_{k, 1}^0   |^2    +  \frac{\pi}{4}    \lambda \mu |c_{k, -1}^0|^2    ) \big\}  .  \\
\end{split}
\end{equation}
\vskip 0.5cm
6) For a pair of $(j, k)$, $j, k \in  \{ 1, 2, \cdots , d \}$, $j \neq k$, let $c_{j, -1} = \lambda c_{k, 1}$, $c_{j, 1} = \mu c_{k, -1}$ with each $\lambda, \mu \in \mathbb{C}$ satisfies $|\lambda| = |\mu| =1$, $\lambda = \overline{\mu}$. Let $c_{l, 1} = c_{l, -1} =0$ for all $l \neq j, k$, $l \in \{ 1, 2, \cdots , d \}$. This leads to 
\begin{equation}
\begin{split}
& c_{j, 1} = c_{j, 1}^0 \exp \big\{  it (\frac{\pi}{4}  |c_{j, 1}^0|^2   + \frac{\pi}{2} |c_{j, -1}^0|^2    + \frac{\pi}{4}   | c_{k, -1}^0   |^2   +  \frac{\pi}{4} \lambda \mu  |c_{k, 1}^0   |^2     ) \big\}  ,  \\
& c_{j, -1}  = c_{j, -1}^0 \exp \big\{  it (\frac{\pi}{4}  |c_{j, -1}^0|^2   + \frac{\pi}{2} |c_{j, 1}^0|^2     + \frac{\pi}{4}  | c_{k, 1}^0   |^2    +  \frac{\pi}{4}    \lambda \mu |c_{k, -1}^0|^2    ) \big\}  .  \\
\end{split}
\end{equation}
\end{flushleft}
All these solutions are quasi-periodic.
Maximizers of $\mathcal{E}$ for fixed mass $M$ are the solutions with $|c_{j, 1}| = |c_{j, -1}|$, and minimizers are the solutions with $c_{j, 1} =0$ or $c_{j, -1} =0$. 
All the solutions can be obtained as extremizers, and are orbitally stable, in the sense that the moduli $|c_{j, 1}(t)|$, $|c_{j, -1}(t)|$ are stable with respect to perturbations of the data, uniformly in time (but the angles are not stable).
 



One can keep continuing to obtain ODE dynamics on each eigenspace $E_N$, $N \geq 2$. We do not pursue this direction here.

\section{Acknowledgement}

The author would like to express her gratitude to Professor Pierre Germain for bringing this research topic to her attention, and also for the very helpful discussions, without which this work would be impossible. 






\end{document}